\theoremstyle{plain}
\newtheorem{thrm}{Theorem}[section]
\newtheorem*{thrm*}{Theorem}
\newtheorem*{prop*}{Proposition}
\newtheorem{lemma}[thrm]{Lemma}
\newtheorem*{lemma*}{Lemma}
\newtheorem*{cor*}{Corollary}
\theoremstyle{remark}
\newtheorem*{note}{Note}
\theoremstyle{remark}
\newtheorem*{defn}{Definition}
\newtheorem*{example}{Example}
\newtheorem*{examples}{Examples}
\newtheorem{itprop}[thrm]{Proposition}
\newtheorem*{itprop*}{Proposition}
\DeclareMathOperator{\im}{im}                                
\newcommand{\N}{\mathbb N}                                   
\newcommand{\Z}{\mathbb Z}                                   
\newcommand{\R}{\mathbb R}                                   
\newcommand{\C}{\mathbb C}                                   
\newcommand{\set}[1]{\left\{#1\right\}}                      
\newcommand{\abs}[1]{\left\lvert#1\right\rvert}              
\newcommand{\norm}[1]{\left\lVert#1\right\rVert}             
\newcommand{\imp}{\Rightarrow}                               
\newcommand{\tox}[1]{\xrightarrow{#1}}                       
\theoremstyle{remark}
\newtheorem*{motiv}{Motivation}
\newcommand{\B}{\mathcal B}                                  
\newcommand{\D}{\mathbb D}                                   
\newcommand{\Hi}{\mathcal H}                                 
\newcommand{\K}{\mathcal K}                                  
\newcommand{\M}{\mathcal M}                                  
\newcommand{\T}{\mathcal T}                                  
\newcommand{\F}{\mathcal F}
\newcommand{\cstalg}{$C\sp *$\nobreakdash-algebra}           
\newcommand{\stalg}{$*$\nobreakdash-algebra}                 
\newcommand{\cstsubalg}{$C^*$\nobreakdash-subalgebra}        
\newcommand{\stideal}{$*$\nobreakdash-ideal}                 
\newcommand{\sthomo}{$*$\nobreakdash-homomorphism}           
\newcommand{\strepn}{$*$\nobreakdash-representation}         
\newcommand{\Efam}{$E$\nobreakdash-family}                   
\newcommand{\clspan}{\overline{\operatorname{span}}}         
\DeclareMathOperator{\id}{id}                                
\DeclareMathOperator{\Irr}{Irr}                              
\DeclareMathOperator{\Int}{Int}                              
\DeclareMathOperator{\Aut}{Aut}
\DeclareMathOperator{\Ph}{Ph}
\newcommand{\bigbrace}[2]{\begin{cases}#1\\#2\end{cases}}
\newcommand{\casesif}[4]{\begin{cases}#1&\text{if }#2\\#3&\text{if }#4\end{cases}}
\newcommand{\zcasesif}[4]{\casesif{\zeta_{#1}}{#2}{\zeta_{#3}}{#4}}
\newcommand{\casesother}[3]{\begin{cases}#1&\text{if }#2,\\#3&\text{otherwise}\end{cases}}
\newcommand{\casesotherd}[3]{\begin{cases}#1&\text{if }#2,\\#3&\text{otherwise.}\end{cases}}
\newcommand{\zmapsto}[2]{\zeta_{#1}\mapsto\zeta_{#2}}         
\newcommand{\xmapsto}[1]{\overset{#1}{\mapsto}}
\newcommand{\phase}[1]{#1\abs{#1}^{-1}}                       
\newcommand{\phaseE}[1]{#1(#1^*#1)^{-1/2}}                    
\newcommand{\qrel}[1]{#1^*#1-q#1#1^*}                         
\newcommand{\projrel}[1]{#1(#1^*#1)^{-1}#1^*}                 
\newcommand{\V}{\textbf{.}}  
\newcommand{\graphloop}{\xygraph{[d(0.2)]{u}="u"(:^{a}@(ul,ur)"u")}}  
\newcommand{\graphloopS}{\xygraph{[d(0.2)]{\V}="v"(:^{}@(ul,ur)"v")}}  
\newcommand{\graphEtwoS}{\xygraph{[d(0.2)]{\V}="v"(:^{}@(ul,ur)"v":^{}[r]{\V}="w")}}  
\newcommand{\graphSthree}{\xygraph{[d(0.2)]{u}="u"(:^{a}@(ul,ur)"u":^{c}[r]{v}="v":^{b}@(ul,ur)"v")}}
\newcommand{\graphSthreeS}{\xygraph{[d(0.2)]{\V}="u"(:^{}@(ul,ur)"u":^{}[r]{\V}="v":^{}@(ul,ur)"v")}}
\newcommand{\graphEtwo}{\xygraph{[d(0.2)]{v}="v"(:^{b}@(ul,ur)"v":^{e}[r(2)]{w}="w")}}  
\newcommand{\graphEfour}{\xygraph{[d(0.2)]{u}="u"(:^a@(ul,ur)"u":^c[r(2)]{v}="v":^b@(ul,ur)"v":^e[r(2)]{w}="w", :_d@/_1pc/"w")}}
\newcommand{\graphEfourv}{\xygraph{[d(0.2)]{u}="u"(:^a@(ul,ur)"u":^c[r]{v}="v":^b@(ul,ur)"v":^e[r]{w}="w", :_d@/_0.7pc/"w")}}
\newcommand{\graphEfourS}{\xygraph{[d(0.2)]{\V}="u"(:^{}@(ul,ur)"u":^{}[r]{\V}="v":^{}@(ul,ur)"v":^{}[r]{\V}="w", :_{}@/_0.6pc/"w")}}
\newcommand{\graphStwoS}{\xygraph{{\V}="v"(:^{}@(ul,ur)"v":^{}[l(0.9)]{\V}="w",:^{}[r(0.9)]{\V}="x")}}
\newcommand{\morphgraph}[2]{\xygraph{{#1}="v"(:^-\phi@/^1.3pc/[r(2.3)]{#2}="w":^-\psi@/^1.3pc/"v")}}
\begin{document}

\title{Quantum Even-Dimensional Balls}
\date{February 2005}
\author{Colin MacLaurin}

\begin{abstract}
The quantum even-dimensional balls are defined as the \cstalg s generated by certain graphs. We exhibit a polynomial algebra for each even-dimensional quantum ball, and classify the irreducible representations of it. 
\end{abstract}

\maketitle

\footnotesize
\emph{Later note}: This honours thesis in mathematics was submitted in February 2005, to complete a BMath(Hons) degree at the University of Newcastle, Australia. It was supervised by Prof. Wojciech Szyma\'nski (then Senior Lecturer), and has since been cited in the paper [HS2]. Thank-you to Wojciech for an excellent project topic, and generosity.

Extended summary: For the quantum disc ($2$-ball), it was already known the following $C^*$-algebras are isomorphic:
\[ C(\D_q) \quad\cong\quad \T \quad\cong\quad C^*\Bigl(\graphEtwoS\Bigr) \vspace{-6mm} \]
These are respectively: the $C^*$-algebra of the quantum disc, the Toeplitz algebra, and a graph $C^*$-algebra. The present work extends this to every quantum $2n$-ball, showing isomorphism between: the completion of a polynomial algebra generated by certain ``quantum'' relations (Equation~\ref{eqn:quantumrelations}), a universal algebra generated by unilateral-shift-like operators, and a graph $C^*$-algebra. See the Appendices for a detailed summary. \qquad --- CM\footnote{Email: \emph{colin.maclaurin@uqconnect.edu.au}}, \emph{12th November 2019}

\normalsize

\tableofcontents
\newpage

\section{Preliminaries}

\subsection{\texorpdfstring{$C^*$}{Cstar}-algebras}

\subsubsection{Preliminaries}

A \emph{\cstalg} is a Banach \stalg\ $A$ which satisfies the \emph{$C^*$-identity}
\[ \norm{a^*a}=\norm{a}^2\, \forall a\in A. \]
For example, the set $B(\Hi)$ of bounded operators on a Hilbert space is a \cstalg, as is the set $C(X)$ of continuous $\C$-valued functions on a compact Hausdorff topological space $X$.

The \emph{Gelfand theorem} states that the category of Hausdorff topological spaces with continuous functions is dual to the category of abelian unital \cstalg s with \sthomo s between them. 

\subsubsection{Representations}

A \emph{\strepn} of a \cstalg\ $A$ on a Hilbert space $\Hi$ is a \sthomo\ $\pi\colon A\to\B(\Hi)$. We say $\pi$ is \emph{nondegenerate} if
\[ \set{\pi(a)h\colon a\in A, h\in\Hi} \]
is dense in $\Hi$. By representation we will mean a nondegenerate \strepn.

We say $\pi$ is \emph{faithful} if $\ker(\pi)=\set{0}$. Also $\pi$ is \emph{irreducible} if there are no nontrivial invariant subspaces of $\Hi$, which is a stronger condition than non-degeneracy. Suppose $\pi\colon A\to\B(\Hi_1)$ and $\rho\colon A\to\B(\Hi_2)$ are two representations of $A$. We say $\pi$ is \emph{unitarily equivalent} to $\rho$ if there exists a unitary operator $u\colon\Hi_1\to\Hi_2$ s.t. $\rho(a)u=u\pi(a)\, \forall a\in A$.

Suppose we have an exact sequence $0\to J\to A\tox{\pi}B\to 0$ where $J$ is an ideal of $A$. Every representation of $J$ or $B$ extends to a representation of $A$ as follows. Suppose $\rho\colon B\to\B(\Hi)$ is a representation of $B$. Then we can compose with the quotient map $\pi$ to get a representation $\rho\circ\pi\colon A\to\B(\Hi)$ of $A$. On the other hand, suppose $\rho\colon J\to\B(H_i)$ is a representation of $J$. Then $\set{\rho(x)h\colon x\in J, h\in H}$ is dense in $\Hi$, so any element of $\Hi$ can be approximated by $\rho(x)h$. We can extend $\rho$ to a representation of $A$ by defining
\[ \rho(a)\left(\rho(x)h\right) := \rho(ax). \]
Moreover, irreducible representations extend, by this procedure, to irreducible ones. It is a theorem that these are all the irreducible representations of $A$:
\[ \Irr(A) = \Irr(J) \sqcup \Irr(B), \]
where $\sqcup$ means disjoint union.
If the ideal $J$ is essential, then a faithful representation on $J$ extends to a faithful representation on $A$. A theorem due to Gelfand-Naimark states that every \cstalg\ has a faithful nondegenerate representation.

\subsubsection{Projections and Isometries}

A \emph{projection} in a \cstalg\ is an element $p$ satisfying $p=p^*=p^2$. In any representation $\pi$, $\pi(p)$ is the orthogonal projection onto a subspace. We say an element $v$ is an \emph{isometry} if $v^*v=1$. In any representation $\pi$, $\norm{\pi(v)h}=\norm{h}\, \forall h$. A \emph{proper isometry} is an isometry which is not unitary.

A \emph{partial isometry} is an element $s$ such that $s^*s$ is a projection. Then $ss^*$ is also a projection and $s=ss^*s$. We say $s$ has \emph{domain projection} $s^*s$ and \emph{range projection} $ss^*$. A \emph{partial unitary} is a partial isometry for which the domain and range projections coincide. 

\subsubsection{Matrix Units and $\K(\Hi)$}

A \emph{system of matrix units} indexed by a set $X$ is a collection $\set{u_{ij}\colon i,j\in X}$ of linearly independent partial isometries in a \cstalg\ which satisfy the relations

\begin{itemize}
\item $u_{ij}u_{mn} = \casesif{u_{in}}{j=m,}{0}{j\ne m.}$
\item $u_{ij}^*=u_{ji}$.
\end{itemize}

Let $\Hi$ be an infinite-dimensional separable Hilbert space, and $R$ be the set of finite-rank operators on $\Hi$. Then the set of \emph{compact operators} $\K=\K(\Hi)$ is defined to be the norm-closure of $R$. $\K$ has only one irreducible representation up to unitary equivalence.

It can be shown that the closed span of a countably infinite system of matrix units in a \cstalg\ is isomorphic to $\K$.

\subsubsection{Polar Decomposition}

Recall that we can write any complex number $a+bi$ in polar form $re^{i\theta}$, where $r\in\R$ is positive and $\abs{e^{i\theta}}=1$. There is an analogous procedure for operators on a Hilbert space:

\begin{itprop} 
Every operator $T\in\B(\Hi)$ has a \emph{polar decomposition}
\[ T=V\abs{T}, \]
where $\abs{T}:=(T^*T)^{1/2}$ is positive and $V$ is a partial isometry. If we require that $V^*V=(ker T)^\perp$ for the domain projection of $V$, then this decomposition is unique. If $T$ is invertible then $V$ is unitary.
\end{itprop}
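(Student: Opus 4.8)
The plan is to build $V$ directly from the geometry of $T$, as the unique partial isometry that carries $\range|T|$ onto $\range T$ and annihilates $\ker T$, and then to read off the normalization, the uniqueness, and the invertible case. The identity that drives everything is this: since $|T|$ is positive, hence self-adjoint, with $|T|^2 = T^*T$, for every $h\in\Hi$ we have
\[ \norm{|T|h}^2 = \ip{|T|^2 h}{h} = \ip{T^*T h}{h} = \norm{Th}^2. \]
Two consequences are immediate: $\ker|T| = \ker T$; and, since $|T|$ is self-adjoint, $\overline{\range|T|} = (\ker|T|)^\perp = (\ker T)^\perp$.

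Next I would define $V$ on $\range|T|$ by $V(|T|h) := Th$. The displayed identity shows this is well-defined, since $|T|h_1 = |T|h_2$ forces $\norm{T(h_1 - h_2)} = \norm{|T|(h_1 - h_2)} = 0$, and that it is isometric; hence it extends uniquely by continuity to an isometry of $\overline{\range|T|}$ onto $\overline{\range T}$. Setting $V := 0$ on $\overline{\range|T|}^\perp = \ker T$ and extending linearly gives $V\in\B(\Hi)$ for which $V^*V$ is precisely the orthogonal projection onto $\overline{\range|T|} = (\ker T)^\perp$; in particular $V$ is a partial isometry with the prescribed domain projection. And $V|T| = T$, because the two sides agree on $\range|T|$ by construction and both vanish on $\ker|T| = \ker T$.

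For uniqueness, suppose also $T = W|T|$ with $W$ a partial isometry and $W^*W = (\ker T)^\perp$. On $\range|T|$ we must have $W(|T|h) = Th = V(|T|h)$, so $W = V$ on $\overline{\range|T|}$ by continuity; and for $h\in\ker T = \overline{\range|T|}^\perp$ we get $\norm{Wh}^2 = \ip{W^*W h}{h} = 0$, so $W$ vanishes there too. Thus $W = V$. Finally, if $T$ is invertible then so is $T^*T$, whose spectrum, being positive, lies in $[\delta,\norm{T}^2]$ for some $\delta>0$; hence $|T| = (T^*T)^{1/2}$ is invertible, so $\range|T| = \Hi$, which forces $\ker T = \set{0}$ and $V^*V = 1$, while $\range T = \Hi$ forces $VV^* = 1$. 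So $V$ is unitary.

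I do not expect a serious obstacle: the crux is spotting the single identity $\norm{|T|h} = \norm{Th}$, which simultaneously yields well-definedness of $V$, its isometry property, and the identifications $\ker|T| = \ker T$ and $\overline{\range|T|} = (\ker T)^\perp$; the remainder is routine bookkeeping about partial isometries. The one spot needing slight care is the passage from the Hilbert-space description of $V$ (an isometry on a closed subspace, zero on its complement) to the algebraic fact that $V^*V$ is a projection, i.e.\ that $V$ is a partial isometry in the sense used earlier — which holds precisely because $V^*V$ is the orthogonal projection onto $\overline{\range|T|}$.
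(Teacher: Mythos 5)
Your proof is correct and complete: the single identity $\norm{\abs{T}h}=\norm{Th}$ does all the work, giving well-definedness and isometry of $V$ on $\range\abs{T}$, the identification $\overline{\range\abs{T}}=(\ker T)^\perp$, the uniqueness, and the unitary conclusion when $T$ is invertible. There is no proof in the paper to compare against: the proposition is stated there as imported background, with only the remark that $\abs{T}=(T^*T)^{1/2}$ makes sense via the continuous functional calculus applied to the positive operator $T^*T$ --- a point you use implicitly and could flag in one line. The only cosmetic wrinkle is your justification of $V\abs{T}=T$: this holds on all of $\Hi$ immediately, since for every $h$ the vector $\abs{T}h$ lies in $\range\abs{T}$ (not merely its closure) and $V(\abs{T}h)=Th$ by definition, so the appeal to both sides vanishing on $\ker T$ is unnecessary.
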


Note that $T^*T$ is positive, so its spectrum is contained in $[0,\infty)$. So we can apply the functional calculus to the function $\sqrt{\cdot}$ which is well defined on $[0,\infty)$. However we can't always find a polar decomposition of an element in a general \cstalg. A sufficient condition that an element $x$ of a unital \cstalg\ $A$ has a polar decomposition is that $\abs{x}$ is invertible, in which case
\[ x=u\abs{x}, \qquad\text{where } u:=\phase{x}. \]
More generally, suppose that $x=pxp$ where $p\in A$ is a projection. Then $x$ belongs to the \cstsubalg\ $pAp$, which is a unital \cstalg\ with unit $p$. If $\abs{x}$ is invertible in $pAp$, then $x$ has a polar decomposition $x=\alpha\abs{x}$ where the phase $\alpha$ satisfies $\alpha^*\alpha=p$. In the examples we will see, we will also have $\alpha\alpha^*<p$.

When an element $x$ of a unital \cstalg\ has a polar decomposition $x=u\abs{x}$ we say $\abs{x}$ is the \emph{modulus} of $x$ and $u$ is the \emph{phase} of $x$, by analogy with the complex numbers.

\subsubsection{The Busby invariant}

The Busby invariant combines most of the information in an extension $0\to A\to B\to C\to 0$ into a single morphism.

A \emph{unitisation} of a \cstalg\ $A$ is a unital \cstalg\ in which $A$ can be embedded as an essential ideal. The smallest unitisation of $A$ is written $A^\sim$, and the largest unitisation is called the \emph{multiplier algebra} $\M(A)$. For example, $\B(\Hi)$ is the multiplier algebra for $\K(\Hi)$.

The \emph{corona algebra} of $A$ is the \cstalg\ $\M(A)/A$. The corona algebra is a generalisation of the Calkin algebra $\B/\K$.

The \emph{Busby invariant} for an extension $E\colon 0\to A\tox{\alpha}B\tox{\beta}C\to 0$ is the morphism $\tau_E \colon C\to\M(A)/A$ defined by
\[ \tau_E(c):=\pi\circ\sigma(b), \]
where $b\in B$ is a lift of $c$ through $\beta$, $\pi$ is the quotient map $\colon\M(A)\to\M(A)/A$, and $\sigma$ is the unique \sthomo\ $\colon B\to\M(A)$ which makes the following diagram commutative:
\[
\begindc{\commdiag}
\obj(10,30)[A]{$A$}
\obj(30,30)[B]{$B$}
\obj(30,10)[C]{$M(A)$}
\mor{A}{B}{$\alpha$}
\mor{A}{C}{$\iota$}
\mor{B}{C}{$\sigma$}
\enddc
\]

\subsubsection{Universal $C^*$-algebras} \cite{B}

A universal \cstalg\ is defined on a set $G$ of generators and a set $R$ of relations. A representation of $(G,R)$ is a set of operators on a Hilbert space $\Hi$ which satisfy the relations. We say that $(G,R)$ is \emph{admissible} if there exists a representation, and the representations sum together nicely. If $(G,R)$ is admissible then
\[ \norm{z}_0 := \sup\set{\norm{\rho(z)}\colon \rho \text{ a representation of $(G,R)$}} \]
is a well-defined finite number, and $\norm{\cdot}_0$ is a $C^*$-semiform. Then $J_0 := \set{z\colon \norm{z}_0=0}$ is a two-sided ideal of $\F(G)$, and the completion of the quotient $\F(G)/J_0$ under $\norm{\cdot}_0$ is called the \emph{universal \cstalg\ of $(G,R)$}. 

It has the property that there is a surjective homomorphism from the universal algebra to any given \cstalg\ with generators which satisfy the relations.

\begin{examples}
The quantum disc is the universal \cstalg\ generated by the relation $\qrel{x}=(1-q)1$. Also a graph algebra is a universal \cstalg\ with generators and relations determined from a graph. We will see both of these examples shortly. \end{examples}

Universal algebras have convenient properties for showing isomorphism. Suppose we have two universal \cstalg s $A$ and $B$. It is enough to find maps $\phi\colon A\to B$ and $\psi\colon B\to A$ defined on generators which preserve the relations (so they extend to homomorphisms), composition gives the identity i.e. $\phi\circ\psi=\id_B$, $\psi\circ\phi=\id_A$, and such that the images of the generators are nonzero. Then $\phi$ and $\psi$ are isomorphisms.

\subsection{Graph Algebras} \cite{GrphAlgs}

\subsubsection{Definitions}
A \emph{directed graph} $E=(E^0,E^1,r,s)$ consists of countable sets $E^0$ and $E^1$ of \emph{vertices} and \emph{edges} respectively, and maps $r,s\colon E^1\to E^0$ describing the \emph{range} and \emph{source} of the edges. We represent a graph by putting points for vertices and arrows for the edges connecting the vertices.

\begin{example}
$E^0=\set{v,w}$, $E^1=\set{b,e}$ in the following graph:
\[ \graphEtwo  \vspace{-6mm} \]
Then $r(b)=v$, $r(e)=w$ and $s(b)=s(e)=v$.
\end{example}

A \emph{sink} is a vertex which emits no edges, i.e. $s^{-1}(v)=\emptyset$, whereas a \emph{source} is a vertex which receives no edges, i.e. $r^{-1}(v)=\emptyset$. We will consider \emph{row-finite graphs}, in which every vertex emits at most finitely many edges.
             
\begin{defn}
Let $E$ be a row-finite directed graph. A \emph{Cuntz-Krieger \Efam} consists of families of projections $\set{P_v\colon v\in E^0}$ and partial isometries $\set{S_e\colon e\in E^1}$ on some Hilbert space $\Hi$ s.t.
\begin{enumerate}
\item [(G1)] $P_vP_w=0$ for $v,w\in E^0$ with $v\ne w$
\item [(G2)] $S_e^*S_e=P_{r(e)}$\, for $e\in E^1$
\item [(G3)] $P_v=\sum_{e\in E^1\colon s(e)=v} S_eS_e^*$\, for $v\in E^0$, provided $v$ is not a sink.
\end{enumerate}
\end{defn}

The relation (G1) says that the projections are mutually orthogonal. The relation (G3) is known as the Cuntz-Krieger relation.

\begin{itprop}
For any row-finite directed graph $E$, there is a \cstalg\ $C^*(E)$ generated by a Cuntz-Krieger \Efam\ $\set{P_v,S_e}$ s.t. for every other Cuntz-Krieger \Efam\ $\set{Q_v,R_e}$ in a \cstalg\ $B$, there is a surjective \sthomo\ $\pi\colon C^*(E)$ \newline
$\to B$ s.t. $\pi(P_v)=Q_v$ and $\pi(S_e)=R_e$ for all $v\in E^0$, $e\in E^1$.
\end{itprop}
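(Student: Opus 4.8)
The plan is to realise $C^*(E)$ as the \emph{universal \cstalg} of a pair $(G,R)$, so that the asserted universal property becomes the instance, recorded above, of the universal property of universal \cstalg s. Take $G=\set{p_v:v\in E^0}\cup\set{s_e:e\in E^1}$ and let $R$ consist of the relations declaring each $p_v$ a projection and each $s_e$ a partial isometry, together with (G1)--(G3). What has to be checked is that $(G,R)$ is admissible. One half is immediate: in any representation a projection and a partial isometry both have norm at most $1$, so every word $z$ in $G\cup G^*$ satisfies $\norm{\rho(z)}\le 1$ in every representation $\rho$, whence $\norm{z}_0<\infty$ and $\norm{\,\cdot\,}_0$ is a genuine $C^*$-seminorm on $\F(G)$. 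The other half, which is the real content, is that there exists a representation --- that is, a Cuntz-Krieger \Efam --- in which no generator is sent to $0$; granting this, the canonical images $P_v,S_e$ of the generators in the completion of $\F(G)/J_0$ are nonzero, since $\norm{p_v}_0\ge\norm{Q_v}$ for $Q_v$ in any such family.

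So the crux is to exhibit a Cuntz-Krieger \Efam\ $\set{P_v,S_e}$ on a Hilbert space with every $P_v\ne 0$. The naive candidate, the path-space representation on $\ell^2$ of the set of all \emph{finite} paths (with $S_e$ prepending $e$ and $P_v$ projecting onto the paths of source $v$), satisfies (G1) and (G2) but fails (G3) at every non-sink vertex, since the basis vector indexed by the length-zero path at $v$ lies in the range of $P_v$ but not of $\sum_{s(e)=v}S_eS_e^*$; that representation generates the Toeplitz-Cuntz-Krieger algebra rather than $C^*(E)$. The remedy is to pass to the \emph{boundary-path space} $X$ of $E$: the infinite directed paths, together with the finite paths whose terminal vertex is a sink (each sink being counted as a length-zero path). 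On $\Hi=\ell^2(X)$ one sets $P_v\xi_x=\xi_x$ if $x$ has source $v$ and $0$ otherwise, and $S_e\xi_x=\xi_{ex}$, where $ex$ denotes $x$ with $e$ prepended, if the result is again an element of $X$ (legal precisely when $x$ has source $r(e)$) and $0$ otherwise. A direct check shows the $P_v$ are mutually orthogonal projections and $S_e^*S_e=P_{r(e)}$; for (G3) one notes that, for $v$ not a sink, deleting the first edge gives a bijection from the set of boundary paths with source $v$ onto the disjoint union, over edges $e$ with $s(e)=v$, of the sets of boundary paths with source $r(e)$, and transcribing this into $\ell^2$ yields $P_v=\sum_{s(e)=v}S_eS_e^*$ --- here row-finiteness is precisely what makes the sum finite, hence a bona fide relation. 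Finally every vertex is the source of at least one element of $X$ (follow edges forward, stopping if a sink is met), so each $P_v\ne 0$, and then $S_e\ne 0$ as $S_e^*S_e=P_{r(e)}\ne 0$.

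Admissibility established, define $C^*(E)$ to be the completion of $\F(G)/J_0$ under $\norm{\,\cdot\,}_0$ and let $P_v,S_e$ be the images of the generators: they satisfy (G1)--(G3), so form a Cuntz-Krieger \Efam; they generate $C^*(E)$ by construction; and they are nonzero by the previous paragraph. Given any Cuntz-Krieger \Efam\ $\set{Q_v,R_e}$ in a \cstalg\ $B$, these are elements of $B$ satisfying the relations $R$, so the universal property of universal \cstalg s produces a \sthomo\ $\pi\colon C^*(E)\to B$ with $\pi(P_v)=Q_v$ and $\pi(S_e)=R_e$; its range is the \cstsubalg\ generated by $\set{Q_v,R_e}$, which is all of $B$ when $B$ is generated by that family, so $\pi$ is the desired surjection. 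The main obstacle is squarely the construction of the boundary-path family and the verification of (G3) for it; everything else is either the general machinery of universal \cstalg s or an elementary norm bound. Note that (G3) is the only place where row-finiteness enters, and the argument as organised genuinely requires it: when a vertex emits infinitely many edges the sum in (G3) converges only in the strong topology and no longer fits the universal-algebra framework used here.
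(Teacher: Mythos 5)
The paper offers no proof of this proposition: it is quoted as background from the graph-algebra literature (Raeburn's notes [R], and [KPR]), so there is nothing in-paper to compare your argument against line by line. Your proof is correct and is essentially the standard one. The reduction to admissibility of the pair $(G,R)$ is exactly the universal-\cstalg\ framework the paper itself recalls in its preliminaries, and you rightly identify the only nontrivial point: producing one Cuntz-Krieger \Efam\ with all vertex projections nonzero. Your boundary-path-space family does this, your verification of (G3) via the ``delete the first edge'' bijection is correct (and correctly isolates row-finiteness as the hypothesis making the sum a finite, hence algebraic, relation), and your remark that the finite-path space only yields the Toeplitz--Cuntz--Krieger algebra is accurate; it also explains, in passing, why the paper's later concrete representations on finite paths ending at a sink do satisfy (G3) for its particular graphs --- the zero-length paths at non-sink vertices are excluded there. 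Two small points. First, as literally stated the proposition asserts surjectivity onto an arbitrary $B$ containing the family, which holds only when $B$ is generated by that family; you flag this correctly, and that is the intended reading. Second, admissibility in Blackadar's sense also requires that the class of representations be closed under direct sums (the paper's ``sum together nicely''); you do not mention this, but for these relations it is immediate since (G1)--(G3) are preserved by direct sums, so it is an omission of a routine check rather than a gap.
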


In other words, $C^*(E)$ is the \emph{universal} \cstalg\ for these relations. $C^*(E)$ is called the \emph{\cstalg\ of $E$}, and is described as a \emph{graph \cstalg}. It is also possible to generalise to the case of graphs which are not row-finite \cite{FLR}.

There is a continuous action $\gamma\colon S^1\to\Aut(C^*(E))$ called the \emph{gauge action} which satisfies $\gamma_t(s_e)=ts_e$, $\gamma_t(p_v)=p_v$. We say an ideal $J$ of $C^*(E)$ is \emph{gauge-invariant} if $\gamma_tJ=J$ for all $t\in S^1$. Existence of the gauge action is equivalent to universality of the \cstalg\ $C^*(E)$.

\subsubsection{Paths in $E$}
A \emph{path} in $E$ is a sequence of edges $\mu=(\mu_1,\dotsc,\mu_n)$ s.t. $r(\mu_i)=s(\mu_{i+1})$ for all $i$. $E^*$ is the set of all finite paths in $E$, including the zero-length paths, which are just vertices. If we define $S_\mu:=S_{\mu_1}S_{\mu_2}\dotsb S_{\mu_n}$, we see that $S_\mu\ne 0$ only if $\mu$ is a path, in which case $S_\mu$ is a partial isometry.

\begin{itprop}
$C^*(E)=\clspan\set{S_\mu S_\nu^*\colon \mu,\nu\in E^*,\, r(\mu)=r(\nu)}$.
\end{itprop}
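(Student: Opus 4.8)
The plan is to show two inclusions. The inclusion $\clspan\set{S_\mu S_\nu^*\colon \mu,\nu\in E^*,\, r(\mu)=r(\nu)}\subseteq C^*(E)$ is immediate, since each $S_\mu S_\nu^*$ is a product of generators of $C^*(E)$ and their adjoints, hence lies in the \cstalg\ generated by the Cuntz--Krieger \Efam. The substance is the reverse inclusion: since $C^*(E)$ is by definition the \cstalg\ generated by $\set{P_v,S_e}$, it suffices to prove that the set $M:=\clspan\set{S_\mu S_\nu^*\colon \mu,\nu\in E^*,\, r(\mu)=r(\nu)}$ is a \cstsubalg\ containing every $P_v$ and every $S_e$, because then $M\supseteq C^*(E)$.

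First I would record that $M$ contains the generators: a zero-length path is a vertex, so taking $\mu=\nu=v$ gives $S_vS_v^*=P_vP_v=P_v\in M$ (using the convention $S_v=P_v$), and taking $\mu=e$, $\nu=r(e)$ gives $S_eS_{r(e)}^*=S_eP_{r(e)}=S_e(S_e^*S_e)=S_e\in M$ by (G2) and the partial-isometry identity $S_e=S_eS_e^*S_e$. Next, $M$ is clearly a closed linear subspace stable under the adjoint, since $(S_\mu S_\nu^*)^*=S_\nu S_\mu^*$ and the range condition $r(\mu)=r(\nu)$ is symmetric. The one real point is that $M$ is closed under multiplication, for which it is enough, by continuity and linearity, to show that a product $(S_\mu S_\nu^*)(S_\alpha S_\beta^*)$ of two spanning elements again lies in $M$.

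The key computation is therefore the evaluation of $S_\nu^* S_\alpha$ for paths $\nu,\alpha$ with $r(\nu)=s(\nu')$-type compatibility. One shows, by induction on $\min(|\nu|,|\alpha|)$ using (G1), (G2) and (G3), that $S_\nu^* S_\alpha$ equals $S_{\alpha'}$ if $\alpha=\nu\alpha'$ (i.e. $\nu$ is an initial segment of $\alpha$), equals $S_{\nu'}^*$ if $\nu=\alpha\nu'$, equals $P_{r(\nu)}$ if $\nu=\alpha$, and equals $0$ otherwise. Here the relations (G2) collapse $S_{\mu_i}^*S_{\mu_i}=P_{r(\mu_i)}$, while (G1) forces orthogonality $S_{\mu_i}^*S_{\alpha_j}=0$ when $\mu_i\ne\alpha_j$ have the same source. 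Substituting back, $(S_\mu S_\nu^*)(S_\alpha S_\beta^*)$ becomes $S_{\mu\alpha'}S_\beta^*$, $S_\mu S_{\beta\nu'}^*$, $S_\mu S_\beta^*$, or $0$, and in each nonzero case the range condition is inherited, so the product lies in $M$. The main obstacle is purely bookkeeping: carrying out the induction for $S_\nu^*S_\alpha$ carefully and tracking the range/source conditions so that the resulting paths $\mu\alpha'$, $\beta\nu'$ are genuine paths with matching ranges; once that lemma is in hand the proposition follows immediately from the universal property via the inclusion $M\supseteq C^*(E)$ together with the trivial reverse inclusion.
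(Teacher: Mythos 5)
Your argument is correct and is the standard one; the paper itself states this proposition without proof in its preliminaries (citing Raeburn's lecture notes), so there is nothing to compare it against, but your route --- showing the closed span is a \stsubalg\ containing the generators, with the key lemma evaluating $S_\nu^*S_\alpha$ by induction --- is exactly the textbook proof. One small attribution slip: for distinct edges $e\ne f$ with the \emph{same} source it is (G3) that forces $S_e^*S_f=0$ (the range projections summing to $P_v$ must be mutually orthogonal), while (G1) handles edges with \emph{different} sources; this does not affect the validity of the argument.
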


\subsubsection{Ideals}
For $v,w\in E^0$, we write $v\ge w$ if there is a path from $v$ to $w$. We say a subset $H$ of $E^0$ is
\begin{itemize}
\item \emph{hereditary} if $w\in H$, $w\ge v$ $\imp$ $v\in H$,
\item \emph{saturated} if for $v\in E^0$ not a sink, $\set{r(e)\colon s(e)=v}\subset H$ $\imp$ $v\in H$.
\end{itemize}

For a row-finite graph $E$ the gauge-invariant ideals in $C^*(E)$ are in a one-to-one correspondence with the saturated hereditary sets of $E^0$. If $I$ is an ideal in $C^*(E)$ then the set $H_I:=\set{v\in E^0\colon P_v\in I}$ is hereditary and saturated. Conversely, if $H\subset E^0$ is hereditary and saturated, then the ideal $I_H$ generated by $\set{P_v\colon v\in H}$ is gauge-invariant.

Furthermore, the quotient $C^*(E)/I_H$ is the graph algebra $C^*(E\backslash H)$, where $I_H$ is the ideal corresponding to some hereditary and saturated set $H$. $E\backslash H$ is the graph obtained by removing all vertices in $H$ and all edges with source or range in $H$.

\subsection{Toeplitz Algebra}

\subsubsection{Definition}

Recall $\ell^2(\N)$ is the Hilbert space of square-summable sequences, where it will be assumed that $\N$ includes $0$. Let $\set{\zeta_0,\zeta_1,\dotsc}$ be the standard orthonormal basis of $\ell^2(\N)$. The \emph{shift operator} or \emph{unilateral shift} $S$ is defined on $\ell^2(\N)$ by
\[ S\zeta_i:=\zeta_{i+1}. \]
$S$ is a bounded linear operator with adjoint
\[ S^*\zeta_i = \casesif{0}{i=0}{\zeta_{i-1}}{i\ge 1.} \]
The shift operator is a proper isometry, since $S^*S=1$ but $SS^*\ne 1$.

\begin{defn}
The \emph{Toeplitz algebra} is the \cstalg\ generated by the unilateral shift:
\[ \T := C^*(S). \]
\end{defn}

\noindent \emph{Coburn's Uniqueness Theorem} states that if $u\in A$, $v\in B$ are proper isometries in \cstalg s $A$ and $B$ then there is an isomorphism $\phi\colon C^*(u)\to C^*(v)$ s.t. $\phi(u)=v$. In particular, any \cstalg\ generated by a proper isometry is isomorphic to $\T$.

$\K$ is an ideal of $\T$ generated by $1-SS^*$, and $\T$ fits into the short exact sequence
\[ 0\to\K\to T\to C(S^1)\to 0. \]

\subsection{Quantum Disc} \cite{KL,PDS,QQS,C-M}

\begin{defn}
The \emph{polynomial algebra} of the \emph{quantum disc} is the \stalg\
\[ P(\D_q):=\C\langle x,x^* \rangle/J_q, \qquad 0<q<1, \]
where $\C\langle x,x^* \rangle$ is the unital free \stalg\ generated by $x$ and $x^*$, and $J_q$ is the \stideal\ generated by the relation
\begin{equation} \label{Dqeqn}
x^*x-qxx^*=(1-q)1.
\end{equation}
\end{defn}

Every \strepn\ $\pi\colon P(\D_q)\to\B(\Hi)$ satisfies $\norm{\pi(x)}=1$, since
\[ \norm{\pi(x)}^2 = \norm{\pi(x)^*\pi(x)} = \norm{q\pi(x)\pi(x)^*+(1-q)1} = q\norm{\pi(x)}^2+(1-q), \]
so $\norm{\pi(x)}^2=1$ and hence $\norm{\pi(x)}=1$. Hence $\sup_\pi\norm{\pi(a)}<\infty$ for all $a\in P(\D_q)$, and so we can define
\[ \norm{a} := \sup_\pi\norm{\pi(a)}. \]
This is a $C^*$-algebraic norm, but it is not complete.

\begin{defn}
The \emph{\cstalg} $C(\D_q)$ of the \emph{quantum disc}, for $0<q<1$, is the $C^*$-completion of $P(\D_q)$.
\end{defn}

This completion is the universal \cstalg\ generated by the relation $x^*x-qxx^*=(1-q)1$.

\begin{itprop} \label{repnqdisc}
Every irreducible \strepn\ of $C(\D_q)$ is unitarily equivalent to one of the following representations:
\begin{enumerate}
\item a one-dimensional representation $\rho_\theta$ defined by $\rho_\theta(x)=e^{i\theta}$, $\rho_\theta(x^*)=e^{-i\theta}$, for $0\le\theta<2\pi$, or
\item an infinite-dimensional representation $\pi_q$ defined on a Hilbert space with orthonormal basis $\set{e_i}_{i\in\N}$ by
\begin{align*}
\pi_q(x)e_i &= \sqrt{1-q^{i+1}} e_{i+1}, \quad i\ge 0, \\
\pi_q(x^*)e_i &= \begin{cases} 0 & i=0, \\ \sqrt{1-q^i} e_{i-1} & i\ge 1. \end{cases}
\end{align*}
\end{enumerate}
\end{itprop}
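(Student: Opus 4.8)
The plan is to classify irreducible representations by analysing the structure forced by the relation $x^*x - qxx^* = (1-q)1$. First I would observe that in any representation $\pi$, the element $a := 1 - \pi(x)^*\pi(x) = 1 - \pi(x^*x)$ is self-adjoint, and the relation rewrites as $\pi(x)^*\pi(x) = q\,\pi(x)\pi(x)^* + (1-q)1$. A direct manipulation shows that $\pi(x)$ intertwines the spectral decomposition of $\pi(x^*x)$: if $T := \pi(x)^*\pi(x)$ then $\pi(x)\pi(x)^*$ is a function of $T$, and one computes that $\pi(x)$ maps the eigenspace of $T$ with eigenvalue $\lambda$ into the eigenspace with eigenvalue $q\lambda + (1-q)$ (equivalently, $1-\lambda \mapsto q(1-\lambda)$). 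So I would consider $b := 1 - T = 1 - \pi(x^*x)$, a positive contraction, whose spectrum is carried by $\pi(x)$ under the map $\mu \mapsto q\mu$ and by $\pi(x)^*$ under $\mu \mapsto \mu/q$. This scaling dynamics is the engine of the proof.

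Next I would split into cases according to whether $\pi(x)$ is an isometry. If $\pi(x)^*\pi(x) = 1$, i.e. $b = 0$, then the relation forces $\pi(x)\pi(x)^* = 1$ as well, so $\pi(x)$ is a unitary $u$; since $x, x^*$ generate a commutative $C^*$-algebra in this case and $\pi$ is irreducible, $\Hi$ is one-dimensional and $u = e^{i\theta}$, giving case (1). Otherwise $b \ne 0$; since $b$ is a positive contraction whose nonzero spectrum is invariant under multiplication by $q$ and accumulates only at $0$, and since irreducibility will pin down the spectrum to a single $q$-orbit, I would argue that $b$ has an eigenvector $e_0$ with $b e_0 = e_0$ (eigenvalue $1$, equivalently $\pi(x^*x)e_0 = 0$, so $\pi(x^*)e_0 = 0$). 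Then set $e_i := \|\pi(x)^i e_0\|^{-1} \pi(x)^i e_0$; induction using the relation gives $\pi(x^*x)e_i = (1-q^i)e_i$, hence $\|\pi(x)e_i\|^2 = 1 - q^{i+1}$, which yields the stated formulas for $\pi_q(x)$ and $\pi_q(x^*)$ on the orthonormal set $\{e_i\}$. The span of the $e_i$ is invariant and nonzero, so by irreducibility it is all of $\Hi$, and the representation is exactly $\pi_q$.

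The main obstacle is the middle step: showing that in an irreducible representation with $b \ne 0$, the positive operator $b$ must actually have $1$ as an eigenvalue with a cyclic eigenvector, rather than merely having $1$ in its spectrum or having its spectrum spread over several $q$-orbits or a continuum. The key points to make this work are (i) that $0$ is the only possible accumulation point of the nonzero part of $\operatorname{spec}(b)$, because the relation forces eigenvalues (or approximate eigenvalues) to come in geometric sequences $\mu, q\mu, q^2\mu, \dots$ bounded by $1$; (ii) that the top of the spectrum of $b$ is attained and equals $1$ — one uses that $\pi(x)$ is a contraction (so $T \ge 0$, $b \le 1$) together with the fact that $\pi(x^*)$ kills the top eigenspace, forcing that eigenvalue to satisfy $\mu/q \notin \operatorname{spec}(b)$, which combined with the chain structure caps it at $1$; and (iii) that irreducibility prevents two distinct $q$-orbits from coexisting, since the closed span of each orbit's eigenvectors is reducing. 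Once the rank-one "vacuum" projection onto $e_0$ is produced, the rest is the routine induction sketched above, and checking that the resulting operators genuinely satisfy the relation and are irreducible (e.g. via Coburn-type reasoning or a direct invariant-subspace argument) completes the classification.
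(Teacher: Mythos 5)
The paper does not actually prove this proposition; it is quoted from the quantum-disc literature ([KL], [HMS]), so your proposal can only be measured against the standard argument. Its broad outline --- spectral analysis of $\pi(x^*x)$, the $q$-scaling of the spectrum under the intertwining $\pi(x)\pi(x^*x)=\pi(xx^*)\pi(x)$, and the dichotomy between a unitary case and a shift case --- is exactly the right strategy, and your unitary branch and final induction are sound in shape.

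There is, however, a genuine error at the pivotal step. You claim that when $b=1-\pi(x^*x)\ne 0$ there is a vector with $be_0=e_0$, ``equivalently $\pi(x^*x)e_0=0$, so $\pi(x^*)e_0=0$''. Both halves fail. First, $\pi(x^*x)e_0=0$ forces $\norm{\pi(x)e_0}^2=\ip{\pi(x^*x)e_0}{e_0}=0$, so it is $\pi(x)$, not $\pi(x^*)$, that would kill $e_0$ --- making your definition $e_i:=\norm{\pi(x)^ie_0}^{-1}\pi(x)^ie_0$ a division by zero. Second, and more fundamentally, the relation gives $\pi(x^*x)=q\pi(xx^*)+(1-q)1\ge(1-q)1$ (the paper uses precisely this invertibility when defining $P(B_q^4)$), so $\operatorname{spec}(b)\subseteq[0,q]$ and the eigenvalue $1$ is unattainable in \emph{any} representation; in $\pi_q$ itself one checks $be_i=q^{i+1}e_i$, so the top of the spectrum is $q$, attained on $\ker\pi(x)^*$. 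The repair is to place the vacuum in $\ker\pi(x)^*$ rather than $\ker\pi(x)$: since $\abs{x}=(\pi(x^*x))^{1/2}$ is invertible, write $\pi(x)=u\abs{x}$ with $u$ an isometry. If $u$ is unitary, then $\pi(xx^*)=u\pi(x^*x)u^*$ has the same spectrum as $\pi(x^*x)$, the relation forces $\operatorname{spec}(b)=q\operatorname{spec}(b)$, and boundedness gives $b=0$, landing you in case (1). Otherwise $\ker\pi(x)^*=\ker u^*\ne\set{0}$; any unit vector $e_0$ there satisfies $\pi(x^*x)e_0=(1-q)e_0$, your induction then runs verbatim to produce $\pi(x^*x)e_i=(1-q^{i+1})e_i$ and the stated weighted-shift formulas, and irreducibility forces $\dim\ker\pi(x)^*=1$ and the closed span of the $e_i$ to be all of $\Hi$. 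With that substitution the proof is correct.
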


The infinite-dimensional representation $\pi_q$ is faithful. The one-dimensional representations correspond to the classical points, forming a circle, of the quantum disc. If we think of $C(S^1)$ as the \cstalg\ generated by $a$ and $a^*$ subject to the relations $aa^*=a^*a=1$, then this classical circle can be embedded into the quantum disc via the homomorphism $\phi_q\colon C(\D_q)\to C(S^1)$ defined by $x\mapsto a$.

There are also unbounded representations of $C(\D_q)$. For $0<q<1$, $C(\D_q)$ is isomorphic to the Toeplitz algebra $\T$, as shown next.

\subsubsection{Isomorphism to the Toeplitz Algebra}\label{TcongDq}

Define an operator $z$ on $\ell^2(\N)$ by:
\begin{equation} \label{zToeplitz}
z := \sum_{k=0}^\infty \left(\sqrt{1-q^{k+1}}-\sqrt{1-q^k}\right) S^{k+1}(S^*)^k,
\end{equation}
with the convention $(S^*)^0=1$. This sum converges since the sequence of partial sums $S_n$ is Cauchy. For $n>m$,
\[ \norm{S_n-S_m} = \norm{\sum_{k=m+1}^n\lambda_kS^{k+1}(S^*)^k} \le \sum_{k=m+1}^n\lambda_k = \sqrt{1-q^{n+1}}-\sqrt{1-q^{m+1}}, \]
where $\lambda_k:=\sqrt{1-q^{k+1}}-\sqrt{1-q^k}$. As $n,m\to\infty$, $\sqrt{1-q^{n+1}}-\sqrt{1-q^{m+1}}\to 0$, and so the sum converges. Clearly $z\in\T$. To see how $z$ acts on the basis elements $\zeta_i$, we evaluate
\begin{align*}
(S^*)^k\zeta_i &= \casesif{0}{i<k}{\zeta_{i-k}}{i\ge k,} \\
S^{k+1}\zeta_i &= \zeta_{i+k+1},
\end{align*}
and so
\[ S^{k+1}(S^*)^k\zeta_i = \casesif{0}{i<k}{\zeta_{i+1}}{i\ge k.} \]
Hence in the sum defining $z(\zeta_i)$, the terms are all zero for $k>i$. Hence
\[ z(\zeta_i) = \sum_{k=0}^i \lambda_k S^{k+1}(S^*)^k\zeta_i = \sqrt{1-q^{i+1}}\zeta_{i+1}, \]
since $\sum_{k=0}^i \lambda_k=\sqrt{1-q^{i+1}}$. So $z$ is a \emph{weighted shift} and has adjoint
\[ z^*(\zeta_i) = \casesif{0}{i=0,}{\sqrt{1-q^i}\zeta_{i-1}}{i\ge 1.} \]
Thus one may show that
\[ z^*z-qzz^*=(1-q)1, \]
which is the defining relation for the quantum disc. Also note that we can recover $S$ from $z$ by considering the polar decomposition of $z$. The operator $\abs{z}=(z^*z)^{1/2}$ satisfies $\abs{z}(\zeta_i)=\sqrt{1-q^{i+1}}\zeta_i$ which is invertible, so $z$ has a unique polar decomposition with phase
\[ \phase{z}\colon\zeta_i\xmapsto{\abs{z}^{-1}}(1-q^{i+1})^{-1/2}\zeta_i\xmapsto{z}\zeta_{i+1}, \]
which is just $S$. In particular we can recover $S$ from $z$, and so $C^*(S)=C^*(z)$.

To show $\T\cong C(\D_q)$, define maps
\[ \morphgraph{C^*(S)}{C(\D_q)} \]
by $\phi(S):=\phase{x}$ and $\psi(x):=z$ on generators. To show these maps extend to homomorphisms we must show they preserve the relations. We have already shown that $\psi$ preserves the quantum relation. To see that $\phase{x}$ is a partial isometry, consider the faithful representation $\pi$ of $C(\D_q)$ above. By Proposition \ref{repnqdisc}, $\pi(x)e_i=\sqrt{1-q^{i+1}}e_{i+1}$, so by the same working as for $z$ we have $\pi(\phase{x})e_i=e_{i+1}$ and this operator is a partial isometry.

Now $\psi\circ\phi=\id$ since $(\psi\circ\phi)(S) = \psi(\phase{x}) = \phase{\psi(x)} = \phase{z} = S$. Also $(\phi\circ\psi)(x) = \phi(z) = \phi(\sum_{k=0}^\infty \lambda_k S^{k+1}(S^*)^k) = \sum_{k=0}^\infty \lambda_k (\phase{x})^{k+1}(\phase{x})^{*k} = x$, which can be seen by computing in the representation $\pi$ and repeating calculations similar to those above. Hence $\phi\circ\psi=\id_{C(\D_q)}$ and $\psi\circ\phi=\id_{\T}$, which implies that $\phi$ and $\psi$ are injective and surjective and hence isomorphisms. This demonstrates the convenience of using universal properties to show isomorphism.

We will use a similar procedure in the case of $C(B_q^4)$, the quantum $4$-ball, to show the isomorphism of the graph algebra defining $C(B_q^4)$ and the completion of the polynomial algebra for it. 

\subsubsection{Isomorphism with a Graph Algebra}
\label{sec:isomorphisms}

Consider the following graph $E$:
\[ \graphEtwo \vspace{-6mm} \]
Then $C^*(E)$ is the universal \cstalg\ generated by projections and partial isometries $\set{P_v,P_w,S_b,S_e}$ subject to the Cuntz-Krieger relations
\[ P_vP_w = 0, \quad S_b^*S_b = P_v, \quad S_e^*S_e = P_w, \quad P_v = S_bS_b^* + S_eS_e^*. \]
Define $S:=S_b + S_e$. One may verify that $S^*S=1$ and $SS^*=P_v<1$, so $S$ is a proper isometry. Moreover $C^*(E)$ is generated by $S$, since we can express the projections and partial isometries in terms of $S$:
\[ P_v = SS^*, \quad P_w = 1-SS^*, \quad S_b = S^2S^*, \quad S_e = S(1-SS^*), \]
which one can verify using the Cuntz-Krieger relations. Hence by Coburn's Theorem $C^*(E)\cong\T$.

The following \cstalg s are all isomorphic:
\[ C(\D_q) \qquad\cong\qquad \T \qquad\cong\qquad C^*\Bigl(\graphEtwoS\Bigr) \vspace{-6mm} \]
giving us a clear understanding of $C(\D_q)$. We now define higher-dimensional quantum balls using what is known as the quantum double suspension, and then find polynomial algebras for, and classify representations of these quantum balls.

\subsection{Quantum Double Suspension} \cite{SPGA}

\begin{defn}
Let $\Omega$ be a compact topological space. The \emph{suspension} $S\Omega$ of $\Omega$ is the quotient of $\Omega\times[0,1]$ with both $\Omega\times\set{0}$ and $\Omega\times\set{1}$ collapsed to single points.
\end{defn}

Applying the suspension twice yields the \emph{double suspension} of a space. Then the \cstalg\ $C(S^2\Omega)$ of continuous functions on $S^2\Omega$ is an essential extension
\[ 0\to C(\Omega)\otimes C_\infty(\R^2)\to C(S^2\Omega)\to C(S^1)\to 0, \]
where $C_\infty(\R^2)$ is the \cstalg\ of continuous functions on $\R^2$, which vanish at infinity.

We wish to define a quantum analogue of this classical double suspension of a space. There is a procedure called \emph{Weyl quantisation} (not described here) which yields a quantum analogue for certain \cstalg s. The quantum analogue of $C_\infty(\R^2)$ is $\K$, but $C(S^1)$ does not have a quantum analogue. Hence we define the quantum double suspension as the \cstalg\ satisfying the following exact sequence, with a condition on the Busby invariant:

\begin{defn}
Let $A$ be a unital \cstalg. The \emph{quantum double suspension} of $A$ is the unital \cstalg\ $S^2A$ for which there exists an essential extension
\[ 0\to A\otimes\K\to S^2A\to C(S^1)\to 0, \]
such that the corresponding Busby invariant $\beta\colon C(S^1)\to\M(A\otimes\K)/A\otimes\K$ sends
\[ \beta\colon z\mapsto\mu(I\otimes T), \]
where $\mu$ is the canonical surjection of $\M(A\otimes\K)$ onto the quotient $M(A\otimes\K)/A\otimes\K$, $T$ is an isometry with range of codimension $1$, and $z$ is the identity function on the circle $S^1$.
\end{defn}

For graph algebras, the quantum double suspension has a simple realisation:

\begin{defn}
Let $E$ be a graph with finitely many vertices $\set{v_1,\dotsc,v_n}$. The \emph{quantum double suspension} of $E$ is the graph $S^2E$ formed by adding a vertex $v_0$ and edges from $v_0$ to every vertex:
\begin{align*}
(S^2E)^0 &= E^0\cup\set{v_0} \\
(S^2E)^1 &= E^1\cup\set{f_0,\dotsc,f_n}
\end{align*}
\end{defn}
where $s(f_i)=v_0$ and $r(f_i)=v_i$.

\begin{itprop}
Suppose $E$ is a graph with a finite number of vertices and $S^2E$ is its quantum double suspension graph. Then $C^*(S^2E)$ is the quantum double suspension \cstalg\ of $C^*(E)$, i.e.
\[ S^2\left(C^*(E)\right)\cong C^*(S^2E). \]
\end{itprop}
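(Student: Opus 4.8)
The plan is to verify directly that $C^*(S^2E)$ carries an essential extension of the form appearing in the definition of the quantum double suspension, with the prescribed Busby invariant; since an essential extension is determined up to isomorphism by its Busby invariant (it is the pullback $C(S^1)\times_\tau\M(C^*(E)\otimes\K)$), this will identify $C^*(S^2E)$ with $S^2(C^*(E))$. Write $E^0=\set{v_1,\dotsc,v_n}$, so that $S^2E$ adjoins one vertex $v_0$, a loop $f_0$ at $v_0$, and edges $f_i$ from $v_0$ to $v_i$ for $1\le i\le n$. The first step is to observe that $H:=E^0$ is a saturated hereditary subset of $(S^2E)^0$: it is hereditary because every new edge has source $v_0$, so no path can leave $E^0$; it is saturated because the only vertex outside $H$ is $v_0$, which emits $f_0$ with $r(f_0)=v_0\notin H$. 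Hence the ideal $I_H$ generated by $\set{P_v\colon v\in E^0}$ is gauge-invariant and $C^*(S^2E)/I_H\cong C^*\bigl((S^2E)\setminus H\bigr)$; since $(S^2E)\setminus H$ is the single vertex $v_0$ with the single loop $f_0$, its graph \cstalg\ is the universal \cstalg\ of one unitary, i.e. $C(S^1)$. This produces the extension $0\to I_H\to C^*(S^2E)\to C(S^1)\to 0$.

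The second step is to identify $I_H$ with $C^*(E)\otimes\K$. Put $V:=\sum_{e\colon s(e)=v_0}S_e=S_{f_0}+\sum_{i=1}^n S_{f_i}$ and $Q:=\sum_{v\in E^0}P_v\in I_H$; since $S^2E$ is row-finite with finitely many vertices, $C^*(S^2E)$ is unital with $1=\sum_v P_v$. Using the basic Cuntz-Krieger relations (in particular $S_e^*S_f=\delta_{ef}P_{r(e)}$ and the pairwise orthogonality of the $S_eS_e^*$) one gets $V^*V=\sum_{v\in(S^2E)^0}P_v=1$ and $VV^*=P_{v_0}=1-Q$, so that $V$ is a proper isometry with $QV=V^*Q=0$. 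It follows that $E_{kl}:=V^kQ(V^*)^l$ ($k,l\ge0$) is an infinite system of matrix units with $E_{00}=Q$, whose partial sums $\sum_{k\le N}E_{kk}=1-V^{N+1}(V^*)^{N+1}$ telescope to $1$ strictly on $I_H$, while the corner $Q\,C^*(S^2E)\,Q$ is the closed span of the $S_\mu S_\nu^*$ with $\mu,\nu$ starting in $E^0$. Because any path starting in $E^0$ stays inside $E$, that corner is precisely the copy of $C^*(E)$ generated by the Cuntz-Krieger \Efam\ $\set{P_v,S_\alpha\colon v\in E^0,\ \alpha\in E^1}$, which is faithful (its vertex projections being nonzero) by the gauge-invariant uniqueness theorem. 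The standard corner/stabilisation argument then upgrades the data $(Q,\set{E_{kl}})$ to a $*$-isomorphism $C^*(E)\otimes\K\xrightarrow{\ \sim\ }I_H$ carrying $b\otimes e_{kl}$ to $V^k b\,(V^*)^l$.

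The third step is to compute the Busby invariant $\tau\colon C(S^1)\to\M(C^*(E)\otimes\K)/(C^*(E)\otimes\K)$. Since $S_{f_i}\in I_H$ for $1\le i\le n$, the images of $V$ and of $S_{f_0}$ in $C(S^1)$ agree, so $V$ is a lift of the generating unitary $z$. Under the identification above, left multiplication by $V$ sends $E_{kl}\mapsto E_{k+1,l}$ and right multiplication sends $E_{kl}\mapsto E_{k,l-1}$ (and to $0$ when $l=0$); that is, $V$ acts on $C^*(E)\otimes\K(\ell^2(\N))$ as the multiplier $I\otimes S$, where $S$ is the unilateral shift, an isometry whose range has codimension $1$. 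Hence $\tau(z)=\mu(I\otimes S)$, which is exactly the Busby invariant required of $S^2(C^*(E))$ (all isometries with range of codimension $1$ being unitarily equivalent, the particular choice of $S$ affects the extension only up to isomorphism). The extension is essential because $\tau$ is injective: for $f\in C(S^1)$ one has $\tau(f)=\mu(I\otimes T_f)$ with $T_f$ the Toeplitz operator of symbol $f$, and $I\otimes T_f\in C^*(E)\otimes\K$ forces $T_f\in\K$, hence $f=0$. Thus $C^*(S^2E)$ satisfies the defining property of $S^2(C^*(E))$, and the two are isomorphic.

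I expect the main obstacle to be the identification $I_H\cong C^*(E)\otimes\K$ in the second step. Checking that $b\otimes e_{kl}\mapsto V^k b\,(V^*)^l$ is multiplicative and $*$-preserving on the algebraic tensor product is a direct manipulation of $V^*V=1$ and $QV=V^*Q=0$; the real work is to show it is isometric and surjective onto $I_H$, i.e. that the matrix units $E_{kl}$ together with the corner $Q\,C^*(S^2E)\,Q$ genuinely reassemble a (spatial) tensor product with $\K(\ell^2(\N))$, and to verify the strict convergence $\sum_k E_{kk}\to 1$ that makes this work. Once that is settled, the exactness of the sequence in the first step and the Busby computation in the third are essentially mechanical.
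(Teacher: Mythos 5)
The paper offers no proof of this proposition --- it is stated as a quoted background result, citing Hong and Szyma\'nski \cite{SPGA} --- so there is nothing in-paper to compare your argument against. That said, your proof is correct, and it is essentially the standard argument from that reference: take the saturated hereditary set $H=E^0$, identify the quotient with $C(S^1)$ and the ideal $I_H$ with $C^*(E)\otimes\K$ using the isometry $V=\sum_{s(e)=v_0}S_e$ and the matrix units $V^kQ(V^*)^l$ over the corner $QC^*(S^2E)Q\cong C^*(E)$ (faithfulness of the corner copy coming from gauge-invariant uniqueness, since the edges of $S^2E$ emitted by vertices of $E^0$ are exactly the edges of $E$, so the corner family is a genuine Cuntz--Krieger $E$-family), and then read off the Busby invariant $z\mapsto\mu(1\otimes S)$ from the multiplier action of $V$. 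Each of the three steps checks out; the one point worth making fully explicit is the final logical step you mention only in passing, namely that the defining property (essential extension of $C(S^1)$ by $C^*(E)\otimes\K$ with Busby invariant $\mu(I\otimes T)$ for $T$ an isometry of range codimension one) determines $S^2(C^*(E))$ up to isomorphism: this holds because any two such isometries $T$ are unitarily equivalent, so the admissible Busby invariants differ by an inner automorphism of the corona, and an essential extension is recovered as the pullback of its Busby invariant.
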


In the classical case, applying the double suspension to a geometric object yields new geometric objects, as shown in Table \ref{tblClassical}:

\begin{table}[!ht]
\begin{tabular}{|c|c|c|l|}
\hline
Top. Space $\Omega$ & Picture & Double Suspension & Iterate to Get Classical: \\
\hline
circle & \circle{16} & $S^2\Omega=S^3$ & $S^2(S^{2n-1})=S^{2n+1}$, odd-dim. spheres \\
two points & $\centerdot\quad\centerdot$ & $S^2\Omega=S^2$ & $S^2(S^{2n})=S^{2(n+1)}$, even-dim. spheres \\
single point & $\centerdot$ & $S^2\Omega=\D$ & $S^2(B^{2n})=B^{2(n+1)}$, even-dim. balls \\
\hline
\end{tabular}
\caption{Classical Case} \label{tblClassical}
\end{table}

Similarly, in the quantum case we can get higher dimensional quantum objects by applying the quantum double suspension, as shown in Table \ref{tblQuantum}:

\begin{table}[!ht]
\begin{tabular}{|c|c|c|c|l|}
\hline
Graph $E$ & $C^*(E)$ & $S^2E$ & $C^*(S^2E)$ & Iterate to Get Quantum: \\
\hline
\graphloopS & $C(S^1)$ & \graphSthreeS & $C(S_q^3)$ & $C(S_q^{2n+1})$, odd-dim. spheres \vspace{-9mm} \\
\textbf{.}\quad\textbf{.} & $C(\centerdot\;\centerdot)$ & \graphStwoS & $C(S_{qc}^2)$, $c\ne 0$ & $C(S_q^{2n})$, even-dim. spheres \\
\textbf{.} & $C(\centerdot)$ & \graphEtwoS & $C(\D_q)\cong\T$ & $C(B_q^{2n})$, even-dim. balls \\
\hline
\end{tabular}
\caption{Quantum Case} \label{tblQuantum}
\end{table}

As suggested in the table, we make the following definition:
\begin{defn}\cite{SPGA}
The \cstalg\ $C(B_q^{2n})$ of continuous functions on the quantum $2n$-ball is the \cstalg\ of the graph obtained by applying the quantum double suspension $n$ times to the single-vertex graph.
\end{defn}

\section{Quantum 4-ball \texorpdfstring{$C(B_q^4)$}{CBq4}} \label{secBq4}

The \cstalg\ $C(B_q^4)$ of continuous functions on the quantum $4$-ball is defined to be the \cstalg\ of the following graph $E_2$, with labels as shown.
\[ \graphEfour \]
The Cuntz-Krieger relations are
\begin{center}
$P_uP_v=0, \quad P_uP_w=0, \quad P_vP_w=0$ \smallskip \\
$S_a^*S_a=P_u, \quad S_b^*S_b=P_v, \quad S_c^*S_c=P_v, \quad S_d^*S_d=P_w, \quad S_e^*S_e=P_w$ \smallskip \\
$P_u=S_aS_a^*+S_cS_c^*+S_dS_d^*, \quad P_v=S_bS_b^*+S_eS_e^*$.
\end{center}

\subsection{Polynomial Algebra}

\begin{defn}
Consider the free \stalg\ $\F(x_1,x_2)$ generated by $x_1$ and $x_2$, and all representations $\pi'$ of this algebra into bounded operators on a Hilbert space which satisfy the relations
\begin{align}
x_1x_2&=0 \\
x_1^*x_2&=0 \\
\qrel{x_2}&=(1-q)1.
\end{align}
In all such representations, the element $\pi'(x_2^*x_2)$ is invertible since
\[ \pi'(x_2^*x_2) = q\pi'(x_2x_2^*)+(1-q)1 > (1-q)1. \]
Hence it makes sense to consider the smaller set of representations $\pi$ which in addition satisfy the relation
\begin{equation}
\qrel{x_1} = (1-q)\left(1-\projrel{x_2}\right).
\end{equation}

Notice that $P:=1-\projrel{x_2}$ is a projection since $1-P = x_2\abs{x_2}^{-2}x_2^*$ is: $(x_2\abs{x_2}^{-2}x_2^*)^* = x_2\abs{x_2}^{-2}x_2^*$ and $(x_2\abs{x_2}^{-2}x_2^*)^2 = x_2\abs{x_2}^{-2}\abs{x_2}^2\abs{x_2}^{-2}x_2^* = x_2\abs{x_2}^{-2}x_2^*$. Also notice that $x_1=Px_1P$, so $x_1$ is in the \emph{corner} in any representation. We have $P\ne 0$ since $x_1P=x_1$, and $x_1\ne 0$ since the graph algebra $C^*(E_2)$ provides a model of these relations in which $x_1\ne 0$, which in turn has the faithful representation $\pi$. We will see that $P$ corresponds to the projection $P_v+P_w\in C^*(E_2)$.

In every representation $\pi$ we have $\norm{\pi(x_1)}=\norm{\pi(x_2)}=1$, as shown for $P(\D_q)$. Hence we can define a semi-norm $\norm{\cdot}_0$ on $\F(x_1,x_2)$ by
\[ \norm{a}_0 := \sup\set{\norm{\pi(a)}}. \]
Let
\[ J_0 := \set{a\in\F(x_1,x_2)\colon \norm{a}_0=0}. \]
It is easy to verify this is a two-sided \stideal\ of $\F(x_1,x_2)$. Define the \emph{polynomial algebra} $P(B_q^4)$ of the quantum $4$-ball as the quotient \stalg\ $\F(x_1,x_2)/J_0$.
\end{defn}

\begin{thrm} \label{thrmE2}
The completion $C^*(x_1,x_2)$ of the polynomial algebra $P(B_q^4)$ is isomorphic to the quantum $4$-ball $C(B_q^4)$.
\end{thrm}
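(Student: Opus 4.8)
The plan is to mimic the argument already carried out in Section~\ref{TcongDq} for the quantum disc, namely to exploit that both $C(B_q^4)=C^*(E_2)$ and $C^*(x_1,x_2)$ are universal \cstalg s and to exhibit mutually inverse \sthomo s between them defined on generators. So I would set up
\[
\morphgraph{C^*(E_2)}{C^*(x_1,x_2)}
\]
where $\phi$ carries the Cuntz-Krieger generators $\set{P_u,P_v,P_w,S_a,S_b,S_c,S_d,S_e}$ to explicit elements of $C^*(x_1,x_2)$, and $\psi$ carries $x_1,x_2$ to explicit elements of $C^*(E_2)$. For $\psi$ the natural choice is $x_2\mapsto z_1$ and $x_1\mapsto z_2$, where the $z_i$ are weighted-shift-type elements of $C^*(E_2)$ built out of the $S_e$'s exactly as $z$ in Equation~\ref{zToeplitz} was built out of $S$: concretely, on the faithful representation $\pi$ of $C^*(E_2)$ one wants operators acting as weighted shifts with the $\sqrt{1-q^{k+1}}$ weights on the appropriate parts of the Hilbert space, arranged so that $\psi(x_2)$ lives on the ``$P_v+P_w$''-part and $\psi(x_1)$ in the corner $P\,C^*(E_2)\,P$ with $P=P_v+P_w$. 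For $\phi$, one runs the polar-decomposition recovery: since $\abs{x_2}$ is invertible (shown in the Definition) one sets $\phi$ so that the phase $\phase{x_2}$ and its powers, together with the projection $P=1-\projrel{x_2}$, rebuild all the $P_v,P_w,S_b,S_e$, and similarly the phase of $x_1$ (which makes sense inside $PAP$, where $\abs{x_1}$ is invertible by the relation $\qrel{x_1}=(1-q)P$) rebuilds $P_u,S_a,S_c,S_d$.

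The steps, in order, would be: (1) write down the candidate elements $z_1,z_2\in C^*(E_2)$ and verify directly in the faithful representation $\pi$ that they satisfy relations (2)--(6), so that $\psi$ extends to a \sthomo; in particular check $z_1z_2=z_1^*z_2=0$ (these hold because the supports are arranged to be orthogonal), $\qrel{z_1}=(1-q)1$, and $\qrel{z_2}=(1-q)(1-\projrel{z_1})$. (2) Compute the polar decompositions: $\abs{z_1}$ is invertible with $\phase{z_1}$ a genuine partial isometry, and $\abs{z_2}$ is invertible inside the corner $\phi(P)C^*(E_2)\phi(P)$; from these phases plus $P$ define $\phi$ on the eight graph generators and verify the Cuntz-Krieger relations of $E_2$ hold, so $\phi$ extends to a \sthomo. (3) Check $\psi\circ\phi=\id$ on the graph generators — this reduces, as in the disc case, to the identity $\phase{z_i}=S_{(\cdot)}$-type statements and to re-summing the series expressing $z_i$ back in terms of its own phase and $P$ — and check $\phi\circ\psi=\id$ on $x_1,x_2$ by the mirror computation. (4) Note nonvanishing of the images of all generators (the graph generators are nonzero in $C^*(E_2)$ by definition, and $x_1,x_2$ are nonzero in $C^*(x_1,x_2)$ as already observed in the Definition), and invoke the universal-algebra criterion recalled in the Preliminaries to conclude $\phi,\psi$ are mutually inverse isomorphisms.

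The main obstacle I expect is bookkeeping in step (1)--(2): unlike the disc, where $\ell^2(\N)$ carries a single weighted shift, here the faithful representation of $C^*(E_2)$ lives on a Hilbert space with basis indexed by paths in $E_2$ (equivalently, a direct sum of pieces attached to the three vertices $u,v,w$), and one must choose $z_1,z_2$ so that: $z_1$ acts as the disc-type weighted shift on the whole space while $z_2$ acts only on the corner $P$ and is ``invisible'' to $z_1$ in the sense $z_1z_2=z_1^*z_2=0$. Getting the supports and the weights consistent so that both quantum relations hold simultaneously — and so that the phases recover exactly the eight Cuntz-Krieger partial isometries with the correct range and source projections — is the delicate part; once the right formulas are in hand, every verification is a finite computation on basis vectors of the same flavour as in Section~\ref{TcongDq}. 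A secondary subtlety is making sure the ``corner'' polar decomposition is legitimate, i.e. that $\phi(P)$ is a projection in $C^*(E_2)$ and that $\abs{z_2}$ is invertible \emph{in} $\phi(P)C^*(E_2)\phi(P)$ (not merely as an operator), which follows from relation (6) but should be stated carefully. With the quantum $4$-ball in hand, the general $C(B_q^{2n})$ case presumably follows by induction using the quantum-double-suspension description, but that is beyond this statement.
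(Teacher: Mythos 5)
Your overall strategy is exactly the paper's: both algebras are universal, so one builds $\psi$ on $x_1,x_2$ via weighted-shift elements $z_i=\sum_k\lambda_kS_i^{k+1}(S_i^*)^k$ and $\phi$ on the Cuntz--Krieger generators via the phases $\alpha_i$ of the $x_i$ (the phase of $x_2$ globally, the phase of $x_1$ inside the corner $PC^*(x_1,x_2)P$), then verifies the relations and that the two compositions are the identity. One cosmetic but real problem: your index bookkeeping is self-contradictory. You send $x_2\mapsto z_1$, $x_1\mapsto z_2$ and require $\qrel{z_1}=(1-q)1$, which forces $z_1$ to be built from the proper isometry $S_a+S_c+S_d$ and to act on the whole space (its range projection is $P_u$, \emph{not} contained in $P_v+P_w$); yet you also assert that $\psi(x_2)$ ``lives on the $P_v+P_w$-part.'' At least one of these statements must be dropped. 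In the paper's conventions $x_2\mapsto z_2$ is the global generator and $x_1\mapsto z_1$ is the one supported in the corner $P_v+P_w=1-\projrel{z_2}$.

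More substantively, your steps (3)--(4) hide the one genuinely nontrivial point. To prove $\phi\circ\psi=\id$ you must show $\sum_k\lambda_k\alpha_i^{k+1}\alpha_i^{*k}=x_i$ \emph{in the abstract algebra} $C^*(x_1,x_2)$; ``re-summing the series'' is not a computation you can carry out there directly, since $C^*(x_1,x_2)$ is defined only by a supremum semi-norm over representations. The paper first establishes $C^*(x_i)\cong C(\D_q)\cong C^*(z_i)$ (using universality of the quantum disc to produce a surjection $C^*(z_i)\to C^*(x_i)$, which together with the restriction of $\psi$ gives mutually inverse maps), and only then transports the series computation into $C^*(E_2)$, where it becomes a basis-vector calculation in the faithful path-space representation. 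The same intermediate isomorphism is what yields the strict inequality $\alpha_1\alpha_1^*<\alpha_1^*\alpha_1$, without which you cannot conclude $\phi(P_w)=P-Q\ne 0$; nonvanishing of the $\phi$-images of the eight graph generators does not follow merely from $x_1,x_2\ne 0$, as your step (4) suggests. Everything else in your outline --- the faithful representation on paths ending at $w$, the corner polar decomposition of $x_1$, and the verification of the Cuntz--Krieger relations for $\phi$ --- matches the paper's proof.
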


\begin{note}
We are not defining $C(B_q^4)$ as the $C^*$-completion of $P(B_q^4)$, as $C(B_q^4)$ is defined to be the graph algebra $C^*(E_2)$. Rather, we prove that they are isomorphic.
\end{note}

\begin{motiv}
We will see that $C^*(E_2)$ is generated by two elements $S_1$ and $S_2$ defined by
\[ S_1:=S_b+S_e, \qquad S_2:=S_a+S_c+S_d. \]
These elements are like the unilateral shift for the Toeplitz algebra seen earlier. In particular, $C^*(S_1)\cong C^*(S_2)\cong \T$, as we shall see. However $S_1$ and $S_2$ are not convenient. Instead we exhibit elements $z_1$ and $z_2$, like $x$ for the Toeplitz algebra, which also generate $C^*(E_2)$ and can be thought of as polynomial functions. 
Now $C(B_q^4)$ fits into the following exact sequence
\[ 0\to\T\otimes\K\to C(B_q^4)\to C(S^1)\to 0 \]
by the definition of the quantum double suspension. Take a system of matrix units $\set{E_{ij}}$ for $\K$, so $\K=\clspan\set{E_{ij}}$. Then $\T\otimes E_{11}\cong\T$, so (roughly speaking) choose an element $z_1\in\T$ like the element $x$ in the proof that $C(\D_q)\cong\T$. Then $C^*(z_1\otimes E_{11})\cong \T\otimes E_{11}$. Choose another element $z_2\in C(B_q^4)$ also similar to $x$. Then $C^*(z_2)\cong\T$ also.

The ideal structure of the graph algebra suggests the definition of $S_1$, $S_2$ given above. Hence we define
\[ z_1 := \sum_{k=0}^\infty \lambda_k S_1^{k+1}(S_1^*)^k, \qquad z_2 := \sum_{k=0}^\infty \lambda_k S_2^{k+1}(S_2^*)^k, \]
where $\lambda_k:=\sqrt{1-q^{k+1}}-\sqrt{1-q^k}$ as before, and find relations between these elements.
\end{motiv}

\begin{lemma} \label{lemB4a}
$C^*(E_2)=C^*(S_1,S_2)$, in other words, $C^*(E_2)$ is generated by $S_1$ and $S_2$.
\end{lemma}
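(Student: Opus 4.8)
The plan is to show that every Cuntz--Krieger generator of $C^*(E_2)$ --- the three vertex projections $P_u,P_v,P_w$ and the five edge partial isometries $S_a,S_b,S_c,S_d,S_e$ --- can be written in terms of $S_1$ and $S_2$ and their adjoints. Since $S_1=S_b+S_e$ and $S_2=S_a+S_c+S_d$ plainly lie in $C^*(E_2)$, the inclusion $C^*(S_1,S_2)\subseteq C^*(E_2)$ is automatic, and recovering all the generators inside $C^*(S_1,S_2)$ gives the reverse inclusion, hence equality. This is the exact analogue of the computation carried out for the two-vertex graph in the Toeplitz case, only with one extra vertex.

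First I would record the elementary identities satisfied by $S_1$ and $S_2$. The edges $b,e$ have common source $v$, so by (G3) at $v$ the projections $S_bS_b^*$ and $S_eS_e^*$ are orthogonal sub-projections of $P_v$; likewise (G3) at $u$ makes $S_aS_a^*,S_cS_c^*,S_dS_d^*$ pairwise orthogonal sub-projections of $P_u$. Feeding this, together with $S_x=S_xS_x^*S_x$, $S_x^*S_x=P_{r(x)}$ and the orthogonality (G1) of the vertex projections, into the expansions of $S_iS_i^*$ and $S_i^*S_i$ kills every cross term: a product $S_xS_y^*$ equals $S_xP_{r(x)}P_{r(y)}S_y^*$ and so vanishes whenever $r(x)\ne r(y)$ by (G1), while a product $S_x^*S_y$ with $x\ne y$ sharing a source vanishes because $S_xS_x^*\perp S_yS_y^*$ by (G3). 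One is left with
\[ S_1S_1^*=S_bS_b^*+S_eS_e^*=P_v,\qquad S_1^*S_1=S_b^*S_b+S_e^*S_e=P_v+P_w, \]
\[ S_2S_2^*=S_aS_a^*+S_cS_c^*+S_dS_d^*=P_u,\qquad S_2^*S_2=P_u+P_v+P_w. \]
In particular $S_2$ is a proper isometry and $S_1$ a partial isometry. These four identities already produce the vertex projections, with no appeal to unitality of $C^*(E_2)$:
\[ P_u=S_2S_2^*,\qquad P_v=S_1S_1^*,\qquad P_w=S_1^*S_1-S_1S_1^*. \]

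Next I would peel off the individual edges by multiplying on the right by range projections. For any edge $x$ and vertex $y$ one has $S_xP_y=S_xS_x^*S_xP_y=S_xP_{r(x)}P_y$, which equals $S_x$ if $y=r(x)$ and $0$ otherwise, the last step again by (G1). Because the edges grouped into $S_1$ (namely $b,e$) have distinct ranges $v,w$, and the edges grouped into $S_2$ (namely $a,c,d$) have pairwise distinct ranges $u,v,w$, this separates them cleanly:
\[ S_b=S_1P_v,\quad S_e=S_1P_w,\qquad S_a=S_2P_u,\quad S_c=S_2P_v,\quad S_d=S_2P_w. \]
Substituting the formulas for $P_u,P_v,P_w$ from the previous step expresses each of $S_a,\dots,S_e$ in terms of $S_1$ and $S_2$, which finishes the argument.

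There is no genuine obstacle here; it is a finite bookkeeping exercise. The two points I would take care over are: (i) resisting the temptation to treat $S_1$ as an isometry --- it is only a partial isometry, and the fact that $S_1^*S_1=P_v+P_w$ is a \emph{proper} projection is exactly what lets us isolate $P_w=S_1^*S_1-S_1S_1^*$; and (ii) keeping straight which cross terms die by orthogonality of range projections (edges sharing a source, via (G3)) versus by orthogonality of vertex projections (edges with distinct ranges, via (G1)).
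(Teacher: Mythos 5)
Your proposal is correct and follows essentially the same route as the paper: it recovers $P_u=S_2S_2^*$, $P_v=S_1S_1^*$, $P_w=S_1^*S_1-S_1S_1^*$ and then each edge isometry as $S_iP_{r(\cdot)}$, yielding exactly the formulas $S_a=S_2^2S_2^*$, $S_b=S_1^2S_1^*$, $S_c=S_2S_1S_1^*$, $S_d=S_2(S_1^*S_1-S_1S_1^*)$, $S_e=S_1(1-S_1S_1^*)$ that the paper records. The only difference is that you supply the cross-term cancellations which the paper leaves as ``one may verify,'' and your bookkeeping of which cancellations come from (G1) versus (G3) is accurate.
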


\begin{proof}
One may verify using the Cuntz-Krieger relations that we can express the projections and partial isometries of $C^*(E_2)$ in terms of $S_1$ and $S_2$:
\begin{align*}
P_u &= S_2S_2^* \\
P_v &= S_1S_1^* \\
P_w &= S_1^*S_1-S_1S_1^* \\
S_a &= S_2^2S_2^* \\
S_b &= S_1^2S_1^* \\
S_c &= S_2S_1S_1^* \\
S_d &= S_2(S_1^*S_1-S_1S_1^*) \\
S_e &= S_1(S_1^*S_1-S_1S_1^*) = S_1(1-S_1S_1^*).
\end{align*}
Hence $C^*(E_2)=C^*(S_1,S_2)$.
\end{proof}

Furthermore, $S_2^*S_2=1$ and $S_2S_2^*=P_u$ so $S_2$ is a proper isometry, with $C^*(S_2)\cong\T$. Also $S_1$ is a partial isometry with $S_1S_1^*=P_v<P_v+P_w=S_1^*S_1$. One may easily verify that $S_1$ belongs to the corner $(P_v+P_w)C^*(E_2)(P_v+P_w)$ which is a unital \cstsubalg\ with unit $P_v+P_w$. Hence $C^*(S_1)$ is contained in this subalgebra, and we have $C^*(S_1)\cong\T$ since $S_1$ is a proper isometry in the subalgebra.

\subsubsection{A faithful representation of $C(B_q^4)$}
We can simplify computations with $z_1$ and $z_2$ by calculating in a faithful representation $\pi$. The set $\set{w}\subset E_2^0$ is hereditary and saturated, with corresponding ideal $J_w$ generated by $P_w$. We define $\pi$ on $J_w$ and extend it to $C^*(E_2)$.

\begin{lemma}
$\set{S_\alpha S_\beta^*\colon \alpha,\beta\in E_2^*,\, r(\alpha)=r(\beta)=w}$ is a system of matrix units with closed span $J_w$. In particular, $J_w\cong\K$.
\end{lemma}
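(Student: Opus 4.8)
The plan is to exhibit the claimed family of elements as a system of matrix units, verify the defining relations, and then identify its closed span with $J_w$ using the fact that a countably infinite system of matrix units spans a copy of $\K$. First I would observe that any path $\alpha\in E_2^*$ with $r(\alpha)=w$ has the form $\alpha = f\,e^{k}$ where $f$ is an edge ending at $w$ --- concretely, $f\in\{d,e\}$ --- possibly preceded by nothing, followed by some number $k\ge 0$ of loops $e$ at $w$; more carefully, since $w$ emits only the loop $e$ and receives edges from $u$ (namely $d$) and from $v$ (namely $e$, which is the loop), the paths ending at $w$ are enumerated by a ``tail'' coming into $w$ together with a power of the loop. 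Indexing these paths by a countable set $X$, I would set $u_{\alpha\beta} := S_\alpha S_\beta^*$ for $\alpha,\beta\in X$, and the goal is the two matrix-unit relations.

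The key computation is $S_\alpha^* S_\beta = \delta_{\alpha\beta}\,P_w$ whenever $r(\alpha)=r(\beta)=w$. This follows from the Cuntz--Krieger relations by the standard cancellation argument: $S_e^*S_f = 0$ for distinct edges $e,f$ with the same source (a consequence of (G3) and the fact that the $S_eS_e^*$ are mutually orthogonal subprojections of $P_{s(e)}$), while $S_e^*S_e = P_{r(e)}$ by (G2); iterating along the two paths, either one is an initial segment of the other --- and since both have the same range $w$ and the only edge out of $w$ is the loop, this forces $\alpha=\beta$ --- or they diverge at some edge and the product is $0$. Granting this, $u_{\alpha\beta}u_{\gamma\delta} = S_\alpha (S_\beta^* S_\gamma) S_\delta^* = \delta_{\beta\gamma} S_\alpha P_w S_\delta^* = \delta_{\beta\gamma} u_{\alpha\delta}$, using $P_w S_\delta^* = S_\delta^*$ since $r(\delta)=w$; and $u_{\alpha\beta}^* = (S_\alpha S_\beta^*)^* = S_\beta S_\alpha^* = u_{\beta\alpha}$. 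Linear independence of the $u_{\alpha\beta}$ I would get from the faithful representation of $C^*(E_2)$ (or directly from the gauge action), since distinct $u_{\alpha\beta}$ act as distinct rank-related operators on orthogonal pieces of the representation space.

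Next I would identify $\clspan\{u_{\alpha\beta}\}$ with $J_w$. By the earlier proposition $C^*(E_2) = \clspan\{S_\mu S_\nu^* : r(\mu)=r(\nu)\}$, and the ideal $J_w$ generated by $P_w$ is the gauge-invariant ideal corresponding to the hereditary saturated set $\{w\}$; concretely $J_w = \clspan\{S_\mu P_w S_\nu^*\} = \clspan\{S_\mu S_\nu^* : \mu,\nu \text{ both factor through } w\}$. Since the only edge out of $w$ is the loop at $w$, any path starting at $w$ is a power of that loop, so $S_\mu P_w$ with $\mu$ passing through $w$ can be rewritten, after absorbing the loop-tail, as $S_\alpha$ with $r(\alpha)=w$; hence $S_\mu S_\nu^* \in J_w$ reduces to an element of $\clspan\{u_{\alpha\beta}\}$, and conversely each $u_{\alpha\beta}\in J_w$. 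This gives $J_w = \clspan\{u_{\alpha\beta}\}$. Since $X$ is countably infinite, the remark in the excerpt that the closed span of a countably infinite system of matrix units is isomorphic to $\K$ yields $J_w\cong\K$.

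The main obstacle is the bookkeeping in the previous paragraph: precisely describing which paths end at $w$ and verifying the ``divergence or equality'' dichotomy for $S_\alpha^* S_\beta$, together with the reduction showing $J_w$ is exactly the span of the $u_{\alpha\beta}$ rather than something larger. The rest --- the two matrix-unit relations and the invocation of the $\K$-identification --- is routine once that combinatorial picture of $E_2^*$ restricted to range $w$ is pinned down.
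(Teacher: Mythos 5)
Your overall strategy is the same as the paper's: verify the two matrix-unit relations by computing $S_\alpha^*S_\beta$ for paths into $w$, identify the closed span with $J_w$, and invoke the fact that a countably infinite system of matrix units has closed span isomorphic to $\K$. However, there is a genuine error in the combinatorial picture on which your key step rests. You describe the paths ending at $w$ as having the form $f e^k$ with ``$e$ the loop at $w$,'' and you justify the dichotomy for $S_\alpha^*S_\beta$ by saying ``the only edge out of $w$ is the loop.'' In the graph $E_2$ the vertex $w$ is a \emph{sink}: it emits no edges at all. The loops are $a$ at $u$ and $b$ at $v$; the edge $e$ goes from $v$ to $w$ and is not a loop. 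The paths ending at $w$ are $w$, $b^ne$, $a^nd$ and $a^mcb^ne$, not tails followed by powers of a loop at $w$.

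This matters because the fact you need --- that if one of $\alpha,\beta$ (both with range $w$) is an initial segment of the other then $\alpha=\beta$ --- is true precisely \emph{because} $w$ is a sink, so no nonempty path can start at $w$. The paper's proof says exactly this: ``$\beta$ cannot extend $\gamma$ nor vice versa since $w$ is a sink.'' Under your assumption that $w$ carries a loop, the conclusion would actually fail: $\alpha$ and $\alpha\ell$ (appending the loop $\ell$) would both end at $w$ and $S_\alpha^*S_{\alpha\ell}=S_\ell\neq 0$, so the family would not be a system of matrix units, and the ideal would be $\K\otimes C(S^1)$ rather than $\K$ --- which is exactly what happens for the ideal $J_v$ in the quotient $C(S_q^3)$ later in the paper, where the terminal vertex does carry a loop. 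The same misreading infects your identification of the closed span with $J_w$ (``absorbing the loop-tail''); the correct statement is that a path meets $w$ if and only if it ends at $w$, again because $w$ is a sink. So the architecture of your argument is right, but the justification of its crux is based on a false premise about the graph and needs to be replaced by the sink property of $w$.
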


\begin{proof}
Let $\alpha\in E_2^*$. Then $S_\alpha P_w=S_\alpha$ if $r(\alpha)=w$, or $0$ otherwise. Hence $S_\alpha\in J_w$ if $r(\alpha)=w$. So $S_\alpha S_\beta^*\in J_w$ if $r(\alpha)=r(\beta)=w$. Recall that $C^*(E_2)=\clspan\set{S_\gamma S_\delta^*\colon \gamma,\delta\in E_2^*}$, and multiplication by one of these elements yields
\[ S_\alpha S_\beta^* S_\gamma S_\delta^* = \casesif{S_\alpha S_\delta^*}{\gamma=\beta}{S_\alpha S_{\delta\beta'}^*}{\beta\text{ extends }\gamma,\text{ i.e. }\beta=\gamma\beta'} \]
for $r(\alpha)=r(\beta)=w$. Note that $\gamma$ cannot extend $\beta$. The terms are zero unless $r(\delta)=w$, and hence $r(\gamma)=w$. Multiplication on the left is similar. Hence the closed span is an ideal and so
\[ J_w=\clspan\set{S_\alpha S_\beta^*\colon \alpha,\beta\in E_2^*,\, r(\alpha)=r(\beta)=w}. \]
We have a system of matrix units since $(S_\alpha S_\beta^*)^*=S_\beta S_\alpha^*$ and for $r(\gamma)=r(\delta)=w$,
\[ S_\alpha S_\beta^* S_\gamma S_\delta^* = \casesotherd{S_\alpha S_\delta^*}{\beta=\gamma}{0} \]
Note that $\beta$ cannot extend $\gamma$ nor vice versa since $w$ is a sink. Hence $J_w\cong\K$.
\end{proof}

We define a representation $\pi\colon C^*(E_2)\to\B(\Hi)$ where $\Hi$ is a Hilbert space with orthonormal basis indexed by the set of finite paths in $E_2$ ending at $w$:
\[ \Hi=\clspan\set{\zeta_\alpha\colon \alpha\in E_2^*,\, r(\alpha)=w}. \]
Define
\[ \pi(S_f)\zeta_\alpha := \casesother{\zeta_{f\alpha}}{r(f)=s(\alpha)}{0} \]
for $f\in E_2^1$. Intuitively, $S_f$ adds the edge $f$, if possible. This definition satisfies the Cuntz-Krieger relations for $E_2$, and extends to a representation on $C^*(E_2)$ by the universal property. Similar representations have been used in the literature.

On inspection, all finite paths ending at $w$ have the form $w$, $b^ne$, $a^nd$ or $a^mcb^ne$ for $m,n\in\N$, where $\N$ is taken to include $0$. For example, the path $e$ is written $b^0e$, and $cb^3e$ is written $a^0cb^3e$. We wish to compute
\[ \pi(z_1) = \pi\left(\sum_{k=0}^\infty \lambda_k S_1^{k+1}(S_1^*)^k\right) = \sum_{k=0}^\infty \lambda_k \pi(S_1)^{k+1}\pi(S_1)^{*k}, \]
and $\pi(z_2)$, which has a similar form. Now
\begin{align*}
\pi(S_1)\colon & \zmapsto{w}{b^0e} \\
               & \zmapsto{b^ne}{b^{n+1}e},
\end{align*}
\begin{align*}
\pi(S_2)\colon & \zmapsto{w}{a^0d} \\
               & \zmapsto{b^ne}{a^0cb^ne} \\
               & \zmapsto{a^nd}{a^{n+1}d} \\
               & \zmapsto{a^mcb^ne}{a^{m+1}cb^ne}.
\end{align*}
Any basis elements omitted from the declaration of operators are presumed mapped to zero. For example, $\pi(S_2)$ sends no basis elements to zero, but $\pi(S_1)$ maps $\zeta_{a^nd}$ and $\zeta_{a^mcb^ne}$ to zero for all $m,n$.

It is easy to calculate adjoints in this representation. Each operator $Q$ in the representation that we will encounter has the property that it maps a basis vector to the scalar multiple of another basis vector, i.e.
\[ Q\zeta_\alpha=\lambda_\alpha\zeta_\beta, \]
where $\lambda_\alpha\in\R$. Also, at most one $\zeta_\alpha$ maps to any given $\zeta_\beta$, up to scalar multiples. This simplifies the computation of adjoints:
\begin{itemize}
\item $Q^*\zeta_\alpha=0$ if $\zeta_\alpha$ is not the image of any $\zeta_\beta$, up to scalar multiples. This follows from the fact that for an arbitrary operator $T$ on Hilbert space, $T^*$ sends $\im(T)^\perp$ to zero.
\item if $Q\zeta_\alpha=\lambda_\alpha\zeta_\beta$ then $Q^*\zeta_\beta=\lambda_\alpha\zeta_\alpha$, which is easily shown.
\end{itemize}
Hence
\[ \pi(S_1)^*\colon \zeta_{b^ne} \mapsto \zcasesif{w}{n=0}{b^{n-1}e}{n\ge 1,} \]
\begin{align*}
\pi(S_2)^*\colon & \zeta_{a^nd} \mapsto \zcasesif{w}{n=0}{a^{n-1}d}{n\ge 1,} \\
                 & \zeta_{a^mcb^ne} \mapsto \zcasesif{b^ne}{m=0}{a^{m-1}cb^ne}{m\ge 1.}
\end{align*}
All other basis elements are mapped to zero. Then
\begin{align*}
\pi(S_2)^{k+1}\colon & \zmapsto{w}{a^kd} \\
                     & \zmapsto{b^ne}{a^kcb^ne} \\
                     & \zmapsto{a^nd}{a^{n+k+1}d} \\
                     & \zmapsto{a^mcb^ne}{a^{m+k+1}cb^ne},
\end{align*}
and for $k\ge 1$,
\begin{align*}
\pi(S_2)^{*k}\colon & \zeta_{a^nd} \mapsto \zcasesif{w}{n=k-1}{a^{n-k}d}{n\ge k,} \\
                    & \zeta_{a^mcb^ne} \mapsto \zcasesif{b^ne}{m=k-1}{a^{m-k}cb^ne}{m\ge k.}
\end{align*}
The composition satisfies
\begin{align*}
\pi(S_2)^{k+1}\pi(S_2)^{*k}\colon & \zmapsto{a^nd}{a^{n+1}d} \qquad\text{if $n\ge k-1$} \\
                                  & \zmapsto{a^mcb^ne}{a^{m+1}cb^ne} \qquad\text{if $m\ge k-1$,}
\end{align*}
for $k\ge 1$. So any given basis vector $\zeta_\alpha$ gets mapped to zero if $k$ is large enough. Separating the $k=0$ term from the other terms in the expression for $\pi(z_2)$, we have
\[ \pi(z_2) = \sqrt{1-q}\,\pi(S_2) + \sum_{k=1}^\infty\lambda_k\pi(S_2)^{k+1}\pi(S_2)^{*k}. \]
The sum will be finite on any given basis element. In fact, 
\begin{align*}
\pi(z_2)\zeta_{a^nd} &= \sqrt{1-q}\pi(S_2)\zeta_{a^nd} + \sum_{k=1}^{n+1}\lambda_k\pi(S_2)^{k+1}\pi(S_2)^{*k}\zeta_{a^nd} \\
                     &= \sqrt{1-q}\zeta_{a^{n+1}d} + \sum_{k=1}^{n+1}\lambda_k\zeta_{a^{n+1}d} = \sqrt{1-q^{n+2}}\,\zeta_{a^{n+1}d}.
\end{align*}
Similarly $\pi(z_2)\zeta_{a^mcb^ne}=\sqrt{1-q^{m+2}}\,\zeta_{a^{m+1}cb^ne}$, and so we have
\begin{align*}
\pi(z_2)\colon & \zeta_w \mapsto \sqrt{1-q}\,\zeta_{a^0d} \\
               & \zeta_{b^ne} \mapsto \sqrt{1-q}\,\zeta_{a^0cb^ne} \\
               & \zeta_{a^nd} \mapsto \sqrt{1-q^{n+2}}\,\zeta_{a^{n+1}d} \\
               & \zeta_{a^mcb^ne} \mapsto \sqrt{1-q^{m+2}}\,\zeta_{a^{m+1}cb^ne}.
\end{align*}
after including the other two cases. By a similar process one can show
\begin{align*}
\pi(z_1)\colon & \zeta_w \mapsto \sqrt{1-q}\,\zeta_{b^0e} \\
               & \zeta_{b^ne} \mapsto \sqrt{1-q^{n+2}}\,\zeta_{b^{n+1}e},
\end{align*}
where as usual it is implied that the other basis elements are mapped to zero. These have adjoints
\[ \pi(z_1)^*\colon \zeta_{b^ne} \mapsto \casesif{\sqrt{1-q}\,\zeta_w}{n=0}{\sqrt{1-q^{n+1}}\,\zeta_{b^{n-1}e}}{n\ge 1,} \]
\begin{align*}
\pi(z_2)^* \colon & \zeta_{a^nd} \mapsto \casesif{\sqrt{1-q}\,\zeta_w}{n=0}{\sqrt{1-q^{n+1}}\,\zeta_{a^{n-1}d}}{n\ge 1,} \\
    & \zeta_{a^mcb^ne} \mapsto \casesif{\sqrt{1-q}\zeta_{b^ne}}{m=0}{\sqrt{1-q^{m+1}}\zeta_{a^{m-1}cb^ne}}{m\ge 1.}
\end{align*}
One may verify that $\pi(z_1z_2)=\pi(z_1)\pi(z_2)$ is the operator that sends all basis elements to zero, hence $z_1z_2=0$ since $\pi$ is faithful. Similarly $z_1^*z_2=0$, and so we have the relations
\begin{align}
z_1z_2     &= 0              \label{B4eq1} \\
z_1^*z_2   &= 0              \label{B4eq2} \\
\qrel{z_2} &= (1-q)1         \label{B4eq3} \\
\qrel{z_1} &= (1-q)(P_v+P_w) \label{B4eq4}
\end{align}
where the last two follow from the representations of the following elements:
\begin{align*}
\pi(z_1^*z_1) \colon & \zeta_w \mapsto (1-q)\zeta_w \\
                     & \zeta_{b^ne} \mapsto (1-q^{n+2})\zeta_{b^ne}.
\end{align*}
\[ \pi(z_1z_1^*) \colon \zeta_{b^ne} \mapsto (1-q^{n+1})\zeta_{b^ne}. \]
\begin{align*}
\pi(z_2^*z_2) \colon & \zeta_w \mapsto (1-q)\zeta_w \\
                     & \zeta_{b^ne} \mapsto (1-q)\zeta_{b^ne} \\
                     & \zeta_{a^nd} \mapsto (1-q^{n+2})\zeta_{a^nd} \\
                     & \zeta_{a^mcb^ne} \mapsto (1-q^{m+2})\zeta_{a^mcb^ne}.
\end{align*}
\begin{align*}
\pi(z_2z_2^*) \colon & \zeta_{a^nd} \mapsto (1-q^{n+1})\zeta_{a^nd} \\
                     & \zeta_{a^mcb^ne} \mapsto (1-q^{m+1})\zeta_{a^mcb^ne}.
\end{align*}
Note that Equation \eqref{B4eq3} is the quantum relation and Equation \eqref{B4eq4} is the quantum relation on a subspace.
\begin{lemma} \label{lemB4b}
$C^*(E_2)=C^*(z_1,z_2)$; in other words, $C^*(E_2)$ is generated by $z_1$ and $z_2$.
\end{lemma}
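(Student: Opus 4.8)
The plan is to imitate the proof that $C(\D_q)\cong\T$ in \S\ref{TcongDq}: recover the generators $S_1,S_2$ of $C^*(E_2)$ from $z_1,z_2$ by means of polar decompositions. One inclusion is immediate. As in \S\ref{TcongDq} the series defining $z_i$ converges in norm, and its partial sums $\sum_{k=0}^{n}\lambda_k S_i^{k+1}(S_i^*)^k$ lie in $C^*(S_1,S_2)=C^*(E_2)$ (Lemma~\ref{lemB4a}); since $C^*(E_2)$ is closed, $z_1,z_2\in C^*(E_2)$ and hence $C^*(z_1,z_2)\subseteq C^*(E_2)$. For the reverse inclusion it suffices, again by Lemma~\ref{lemB4a}, to produce $S_1$ and $S_2$ inside $C^*(z_1,z_2)$.

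Consider $S_2$ first. By~\eqref{B4eq3}, $z_2^*z_2=q\,z_2z_2^*+(1-q)1\ge(1-q)1$, so $z_2^*z_2$ is invertible in $C^*(E_2)$ and $\abs{z_2}=(z_2^*z_2)^{1/2}$ is invertible in $C^*(z_2)$. Thus $z_2$ has the unique polar decomposition $z_2=u_2\abs{z_2}$ with phase $u_2=\phase{z_2}\in C^*(z_2)\subseteq C^*(z_1,z_2)$ and $u_2^*u_2=1$. Applying the faithful representation $\pi$ and using the formulas above for $\pi(z_2)$ and $\pi(z_2^*z_2)$, a computation identical to the one in \S\ref{TcongDq} gives $\pi(u_2)=\pi(z_2)\pi(\abs{z_2})^{-1}$ equal to the operator $\zmapsto{w}{a^0d}$, $\zmapsto{b^ne}{a^0cb^ne}$, $\zmapsto{a^nd}{a^{n+1}d}$, $\zmapsto{a^mcb^ne}{a^{m+1}cb^ne}$, which is precisely $\pi(S_2)$. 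Since $\pi$ is faithful, $u_2=S_2$, so $S_2\in C^*(z_1,z_2)$.

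The element $z_1$ is not invertible ($\pi(z_1)$ annihilates $\zeta_{a^nd}$ and $\zeta_{a^mcb^ne}$), so we first produce the projection $P:=P_v+P_w=S_1^*S_1$ from $z_1$ and then work inside the corner $P\,C^*(E_2)\,P$, exactly as $x_1$ was seen to lie in a corner in the definition of $P(B_q^4)$. From the formula for $\pi(z_1^*z_1)$ --- which acts by $1-q$ on $\zeta_w$, by $1-q^{n+2}$ on $\zeta_{b^ne}$, and by $0$ on every other basis vector --- the spectrum of $z_1^*z_1$ is contained in $\set{0}\cup[1-q,1]$ with $0$ isolated. Choosing $g\in C([0,1])$ with $g\equiv0$ near $0$ and $g\equiv1$ on $[1-q,1]$, the element $g(z_1^*z_1)\in C^*(z_1)$ is a projection, and $\pi(g(z_1^*z_1))$ is the orthogonal projection onto $\clspan\set{\zeta_w,\zeta_{b^ne}}=\im\pi(S_1^*S_1)$; faithfulness of $\pi$ gives $g(z_1^*z_1)=P$, so $P\in C^*(z_1)$. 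Now $P\,C^*(E_2)\,P$ is a unital \cstsubalg\ with unit $P$, and (as one checks in $\pi$) $z_1=Pz_1P$ lies in it; there $\abs{z_1}=(z_1^*z_1)^{1/2}$ is invertible (its spectrum in the corner lies in $[1-q,1]$), so by the Polar Decomposition discussion $z_1=u_1\abs{z_1}$ with phase $u_1=z_1\abs{z_1}^{-1}\in C^*(z_1)$ (inverse taken in the corner), satisfying $u_1^*u_1=P$ and $u_1u_1^*<P$. Computing $\pi(u_1)=\pi(z_1)\pi(\abs{z_1})^{-1}$ on the corner yields the operator $\zmapsto{w}{b^0e}$, $\zmapsto{b^ne}{b^{n+1}e}$ (zero elsewhere), which is $\pi(S_1)$; hence $u_1=S_1$ and $S_1\in C^*(z_1,z_2)$.

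Combining the two steps, $C^*(E_2)=C^*(S_1,S_2)\subseteq C^*(z_1,z_2)\subseteq C^*(E_2)$, whence $C^*(E_2)=C^*(z_1,z_2)$. The only genuinely delicate point is the treatment of $S_1$: since $z_1$ and $S_1$ do not live in the whole unital algebra but only in the corner $(P_v+P_w)C^*(E_2)(P_v+P_w)$, one must first manufacture the unit $P$ of that corner out of $z_1$ alone --- using the spectral gap of $z_1^*z_1$ --- before the polar decomposition argument can be run; the remaining computations are routine repetitions of the $C(\D_q)\cong\T$ case.
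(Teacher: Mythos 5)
Your proof is correct and follows the paper's own strategy: both arguments recover $S_2$ as the phase of $z_2$ (whose modulus is invertible by the quantum relation) and $S_1$ as the phase of $z_1$ computed in the corner $(P_v+P_w)C^*(E_2)(P_v+P_w)$, verifying the identifications in the faithful representation $\pi$. The one place you genuinely diverge is in showing that the corner's unit $P=P_v+P_w$ lies in $C^*(z_1,z_2)$: you manufacture it from $z_1$ alone by continuous functional calculus, using the spectral gap of $z_1^*z_1$ (spectrum contained in $\set{0}\cup[1-q,1]$ with $0$ isolated), whereas the paper simply observes $P_v+P_w=1-P_u=1-S_2S_2^*$, which is available once $S_2$ has already been recovered. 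Your route is sound and buys the marginally stronger fact that $P\in C^*(z_1)$ by itself --- a fact of the kind the paper later needs in the proof of Theorem~\ref{thrmE2}, where it shows $P\in C^*(x_1)$ --- but it is more work than necessary; note also that relation~\eqref{B4eq4} gives the same conclusion in one line, since $P=(1-q)^{-1}(z_1^*z_1-qz_1z_1^*)\in C^*(z_1)$. In all other respects the two proofs coincide.
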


\begin{proof}
We show that $C^*(S_1,S_2)=C^*(z_1,z_2)$, from which the result follows. We must show that $S_1$ and $S_2$ can be recovered from $z_1$ and $z_2$. Notice that $z_2^*z_2$ is invertible, from its image in the representation. Hence it has a unique polar decomposition with phase $\phase{z_2}$, which one may show is just $S_2$. In particular, $S_2\in C^*(z_1,z_2)$.

Now $z_1^*z_1$ is not invertible, however it is invertible in the \cstsubalg\ $(P_v+P_w)C^*(E_2)(P_v+P_w)$ of $C^*(E_2)$ which is unital with unit $P_v+P_w$. Working in this \cstsubalg, $z_1$ has a unique polar decomposition with phase $\phase{z_1}=\phaseE{z_1}$ which, in the representation restricted to the subalgebra, is the operator
\[ \pi(z_1)\pi(z_1^*z_1)^{-1/2} \colon \bigbrace{\zeta_w}{\zeta_{b^ne}} \xmapsto{\pi(z_1^*z_1)^{-1/2}} \bigbrace{(1-q)^{-1/2}\zeta_w}{(1-q^{n+2})^{-1/2}\zeta_{b^ne}} \xmapsto{\pi(z_1)} \bigbrace{\zeta_{b^0e}}{\zeta_{b^{n+1}e},} \]
which is just $\pi(S_1)$ since $S_1$ also belongs to the subalgebra. Hence $S_1$ is the phase of $z_1$. It remains to show that $P_v+P_w\in C^*(z_1,z_2)$, which follows since $P_v+P_w=1-P_u=1-S_2S_2^*\in C^*(z_1,z_2)$.
\end{proof}

\begin{proof}[Proof of Theorem \ref{thrmE2}]
We want to find homomorphisms
\[ \morphgraph{C^*(E_2)}{\qquad C^*(x_1,x_2)} \]
so we must find images for the generators. It is logical to define
\[ \psi(x_i) := z_i, \qquad i=1,2, \]
as $x_i$ and $z_i$ satisfy the same relations. To define $\phi$ we must find images for the Cuntz-Krieger generators. Note that we must go back to these generators since we have not defined $C^*(z_1,z_2)$ as a universal algebra with respect to certain relations between $z_1$ and $z_2$. It is the original generators and relations which define the universal algebra $C^*(E_2)$.

Lemma \ref{lemB4a} gives the generators in terms of $S_i$, $i=1,2$. Now since $S_i$ is the phase of $z_i$, $i=1,2$, it makes sense that $\phi$ should map each $S_i$ to the phase of $x_i$, if these admit a polar decomposition. In fact $x_2$ has a polar decomposition since $\abs{x_2}^2 = x_2^*x_2 = qx_2x_2^*+(1-q)1 > (1-q)1$, so $\abs{x_2}$ is invertible. Let $\alpha_2$ be the phase of $x_2$, so $\alpha_2=\phase{x_2}$ and $\alpha_2^*\alpha_2=1$. Notice that $\alpha_2\alpha_2^* = x_2\abs{x_2}^{-2}x_2^* = 1-P$. Also $x_1$ belongs to the \cstsubalg\ $PC^*(x_1,x_2)P$, and $\abs{x_1}$ is invertible in this subalgebra by the same reasoning. Hence $x_1$ has a polar decomposition $x_1=\alpha_1\abs{x_1}$ say, where the phase $\alpha_1$ satisfies $\alpha_1^*\alpha_1=P$.

We use the results of Lemma \ref{lemB4a} to define $\phi$:
\[ \phi(P_u):=\alpha_2\alpha_2^*, \qquad \phi(P_v):=\alpha_1\alpha_1^* \quad \cdots \quad \phi(S_e):=\alpha_1(\alpha_1^*\alpha_1-\alpha_1\alpha_1^*). \]
The images are clearly projections and partial isometries respectively. To show that $\phi$ and $\psi$ extend to homomorphisms, we must show that they preserve the relations. Equations \eqref{B4eq1}-\eqref{B4eq4} show that $\psi$ preserves the relations for $C^*(x_1,x_2)$, if we can show that $P_v+P_w=\psi(1-\projrel{x_2})$. But the right hand side is just
\[ 1-\projrel{z_2} = 1-\phase{z_2}\abs{z_2}^{-1}z_2^* = 1-S_2S_2^* = 1-P_u = P_v+P_w, \]
by Lemmas \ref{lemB4a} and \ref{lemB4b}. We can show that $\phi$ preserves the Cuntz-Krieger relations. Define $Q:=\alpha_1\alpha_1^*$. Then $Q\le P$, and in fact $Q<P$ as we will see shortly. We then have
\[ \phi(P_u) = 1-P, \qquad \phi(P_v) = Q, \qquad \phi(P_w) = P-Q. \]
We illustrate that $\phi$ preserves the Cuntz-Krieger relations with some examples. Firstly $P_vP_w=0$, and
\[ \phi(P_v)\phi(P_w) = Q(P-Q) = QP-Q^2 = Q-Q = 0, \]
as required. Also $S_d^*S_d=P_w$, and
\begin{align*}
\phi(S_d)^*&\phi(S_d) = (\alpha_2(\alpha_1^*\alpha_1-\alpha_1\alpha_1^*))^*\alpha_2(\alpha_1^*\alpha_1-\alpha_1\alpha_1^*) = (\alpha_2(P-Q))^*\alpha_2(P-Q) \\
&= (P-Q)\alpha_2^*\alpha_2(P-Q) = (P-Q)^2 = P-Q = \phi(P_w).
\end{align*}
Also $P_v=S_bS_b^*+S_eS_e^*$, and we have
\begin{align*}
\phi(S_b)&\phi(S_b)^*+\phi(S_e)\phi(S_e)^* = \alpha_1^2\alpha_1^*\alpha_1(\alpha_1^*)^2 + \alpha_1(1-\alpha_1\alpha_1^*)(1-\alpha_1\alpha_1^*)\alpha_1^* \\
&= \alpha_1^2P(\alpha_1^*)^2 + \alpha_1(1-Q)^2\alpha_1^* = \alpha_1Q\alpha_1^*+\alpha_1(1-Q)\alpha_1^* = \alpha_1\alpha_1^* = Q = \phi(P_v),
\end{align*}
where we have used $\alpha_1P\alpha_1^*=\alpha_1\alpha_1^*=Q$. The other relations may be shown in a similar manner.

To show that $\psi\circ\phi=\id_{C^*(E_2)}$ we first show that $\phi(S_i)=\alpha_i$ for $i=1,2$, which holds since
\[ \phi(S_1) = \phi(S_b+S_e) = \alpha_1(\alpha_1\alpha_1^*+1-\alpha_1\alpha_1^*) = \alpha_1, \]
and
\[ \phi(S_2) = \phi(S_a+S_c+S_d) = \alpha_2(\alpha_2\alpha_2^*+\alpha_1\alpha_1^*+\alpha_1^*\alpha_1-\alpha_1\alpha_1^*) = \alpha_2(1-P+P) = \alpha_2. \]
Hence
\[ (\psi\circ\phi)(S_i) = \psi(\alpha_i) = \psi(\Ph(x_i)) = \Ph(\psi(x_i)) = \Ph(z_i) = S_i, \]
for $i=1,2$, where $\Ph$ denotes phase. Consequently $\psi\circ\phi=\id$ on all the Cuntz-Krieger generators, since these are determined by $S_1$ and $S_2$, and so $\psi\circ\phi=\id_{C^*(E_2)}$.

To show that $\phi\circ\psi=\id_{C^*(x_1,x_2)}$, we first need to show $C^*(x_i)\cong C(\D_q)$, $i=1,2$. For $z_2$, we know $C(\D_q)\cong C^*(S_2)\cong C^*(z_2)$, so the map $x\mapsto z_2$ is an isomorphism, and $z_2$ is like the quantum generator. By universality of the quantum disc there exists a surjective homomorphism $\eta\colon C^*(z_2)\to C^*(x_2)$ which sends $z_2\mapsto x_2$. Also, $\psi$ can be restricted to a homomorphism $\psi\colon C^*(x_2)\to C^*(z_2)$, $x_2\mapsto z_2$. The images of the generators are nonzero, the quantum relation is preserved, and the compositions give the identity maps. Hence $\psi$ and $\eta$ are isomorphisms between these subalgebras, so $C^*(x_2)\cong C(\D_q)$\footnote{Alternatively, $\phi$ is an injective homomorphism, which we can restrict to a map $\phi\colon C^*(z_i)=C^*(S_i)\to C^*(\alpha_i)\subset C^*(x_i)$. By injectivity of $\phi$, $C^*(x_i)$ contains a copy of $C(\D_q)$. It can be no larger than $C(\D_q)$, since there is a surjective homomorphism from $C(\D_q)$ onto $C^*(x_i)$ by universality. Hence we must have isomorphism: $C^*(x_i)\cong C(\D_q)$.}. We can show the same result for $C^*(x_1)$. Firstly $P\in C^*(x_1)$ since $P=(1-q)^{-1}(x_1^*x_1-qx_1x_1^*)$. Then $C^*(x_1)$ is contained in the \cstsubalg\ $PC^*(x_1,x_2)P$, and one can work in this subalgebra to obtain the desired result.

One consequence of the isomorphism $C^*(x_i)\cong C(\D_q)$ is that $\alpha_i\alpha_i^*<\alpha_i^*\alpha_i$, and consequently the images under $\phi$ of the Cuntz-Krieger generators are nonzero. We can now show $\phi\circ\psi=\id_{C^*(x_1,x_2)}$ since
\[ (\phi\circ\psi)(x_i) = \phi(z_i) = \phi\left(\sum_{k=0}^\infty \lambda_k S_i^{k+1}S_i^{*k}\right) = \sum_{k=0}^\infty \lambda_k \alpha_i^{k+1}\alpha_i^{*k}. \]
By the isomorphism just exhibited, we can compute in $C^*(z_i)\subset C^*(E_2)$ to show that this sum is just $x_i$. So $\phi\circ\psi=\id_{C^*(x_1,x_2)}$ because it is the identity on generators. Consequently $\phi$ and $\psi$ are injective and surjective, and hence $C^*(E_2)\cong C^*(x_1,x_2)$.
\end{proof}

\subsection{Representations}

We classify the irreducible representations of $C^*(x_1,x_2)\cong C(B_q^4)$ up to unitary equivalence, using the general theory of graph algebras.

\begin{thrm}
Every irreducible \strepn\ of $C(\B_q^4)$ is unitarily equivalent to one of the following representations:
\begin{enumerate}
\item a representation $\pi$ defined on a Hilbert space $\Hi$ with orthonormal basis $\set{\zeta_i}$ indexed by paths of finite length in $E_2$ ending at $w$: $w$, $b^ne$, $a^nd$, or $a^mcb^ne$ where $m,n\in\N=\N\cup\set{0}$ by:
\begin{align*}
\pi(x_1)\colon & \zeta_w \mapsto \sqrt{1-q}\,\zeta_{b^0e} \\
               & \zeta_{b^ne} \mapsto \sqrt{1-q^{n+2}}\,\zeta_{b^{n+1}e} \\
               & \zeta_{a^nd}, \zeta_{a^mcb^ne} \mapsto 0, \\
\pi(x_2)\colon & \zeta_w \mapsto \sqrt{1-q}\,\zeta_{a^0d} \\
               & \zeta_{b^ne} \mapsto \sqrt{1-q}\,\zeta_{a^0cb^ne} \\
               & \zeta_{a^nd} \mapsto \sqrt{1-q^{n+2}}\,\zeta_{a^{n+1}d} \\
               & \zeta_{a^mcb^ne} \mapsto \sqrt{1-q^{m+2}}\,\zeta_{a^{m+1}cb^ne},
\end{align*}
\item a family of representations $\varepsilon_\theta$ parameterised by $\theta\in S^1$ on the Hilbert space $\Hi$ with orthonormal basis $\set{\zeta_v}\cup\set{\zeta_{a^nc}\colon n\in\N}$: 
\begin{align*}
\varepsilon_\theta(x_1) \colon & \zeta_v \mapsto \theta\zeta_v \\
                               & \zeta_{a^nc} \mapsto 0, \\
\varepsilon_\theta(x_2) \colon & \zeta_v \mapsto \sqrt{1-q}\zeta_{a^0c} \\
                               & \zeta_{a^nc} \mapsto \sqrt{1-q^{n+2}}\zeta_{a^{n+1}c},
\end{align*}
\item a family of one-dimensional representations $\sigma_\theta$ parameterised by $\theta\in S^1$: 
\[ \sigma_\theta(x_1):=0, \qquad \sigma_\theta(x_2):=\theta. \]
\end{enumerate}
Only the representation $\pi$ is faithful.
\end{thrm}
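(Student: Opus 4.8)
The plan is to run the standard ideal‑theoretic machinery for graph \cstalg s: use the decomposition $\Irr(A)=\Irr(J)\sqcup\Irr(A/J)$ for an ideal $J$ of $A$ recalled in the preliminaries, and transport everything to the $(x_1,x_2)$ picture via the isomorphism $C(B_q^4)=C^*(E_2)\cong C^*(x_1,x_2)$ of Theorem~\ref{thrmE2} — concretely, via $\psi(x_i)=z_i$ and the images of the Cuntz--Krieger generators under $\phi$ (where $\phi(S_i)=\alpha_i$, $\phi(P_v)=\alpha_1\alpha_1^*$, etc., with $\alpha_i$ the phase of $x_i$).

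First I would list the saturated hereditary subsets of $E_2^0=\set{u,v,w}$. As $u\ge v\ge w$, the vertex $w$ is a sink, and $v$ emits into $w$, these are exactly $\emptyset\subsetneq\set w\subsetneq\set{v,w}\subsetneq E_2^0$, giving a chain of gauge‑invariant ideals $0\subsetneq I_{\set w}\subsetneq I_{\set{v,w}}\subsetneq C^*(E_2)$ to be peeled off in turn. The bottom ideal is $I_{\set w}=J_w\cong\K$ by the lemma above, so $\Irr(I_{\set w})$ is a single point; it extends, by the extension procedure for ideals, to one irreducible representation of $C^*(E_2)$, which is precisely the path‑space representation $\pi$ built in the faithful‑representation subsection (on $J_w$ it is the standard matrix‑unit representation of $\K$). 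Transported through $\psi$ this is representation~(1). At the top, $C^*(E_2)/I_{\set{v,w}}\cong C^*(E_2\setminus\set{v,w})$ is the \cstalg\ of the single‑vertex, single‑loop graph, namely $C(S^1)$; its irreducibles are the characters $\theta\in S^1$, and pulling them back (with $S_a\mapsto\theta$, $S_c,S_d\mapsto0$, hence $z_2\mapsto\sum_k\lambda_k\theta=\theta$ and $z_1\mapsto0$) yields the one‑dimensional family $\sigma_\theta$ of~(3).

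The substantive step is the middle subquotient $I_{\set{v,w}}/I_{\set w}$. Under $C^*(E_2)/I_{\set w}\cong C^*(E_2\setminus\set w)=:C^*(F)$, where $F$ has vertices $u,v$, loops $a$ at $u$ and $b$ at $v$, and one edge $c\colon u\to v$, this subquotient is the ideal $I^F_{\set v}$ of $C^*(F)$ generated by $P_v$. In $F$ the loop $b$ has no exit, so $P_vC^*(F)P_v=C^*(S_b)\cong C(S^1)$ with $S_b$ a unitary of that corner, which I denote $U$; and the partial isometries $S_\rho$ along the paths $\rho\in\set v\cup\set{a^mc:m\ge0}$ (the paths in $F$ ending at $v$ and not ending in $b$) have mutually orthogonal range projections and common source projection $P_v$. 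Together these identify $I^F_{\set v}=\clspan\set{S_\rho U^kS_\sigma^*}\cong\K\otimes C(S^1)$ (equivalently, $I^F_{\set v}$ is Morita equivalent to $C^*$ of a single loop, $C(S^1)$). Hence $\Irr(I^F_{\set v})$ is indexed by $\theta\in S^1$ — the unique irreducible of $\K$ tensored with evaluation at $\theta$ — and each extends to an irreducible of $C^*(E_2)$; working out the images of $x_1,x_2$ under $\phi$ produces exactly the family $\varepsilon_\theta$ of~(2). Assembling the three layers, $\Irr(C(B_q^4))=\set\pi\sqcup\set{\varepsilon_\theta}_{\theta\in S^1}\sqcup\set{\sigma_\theta}_{\theta\in S^1}$.

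Finally I would check the bookkeeping. Inequivalence across the three families holds since unitary equivalence preserves the ranks of the images of the vertex projections: $\pi(P_w)\ne0$, whereas $\varepsilon_\theta(P_w)=0$ and $\varepsilon_\theta(P_v)\ne0$, whereas $\sigma_\theta(P_v)=0$; within a family, the parameter is recovered as the unique nonzero eigenvalue of $\varepsilon_\theta(x_1)$ (a nonnegative multiple of a rank‑one projection) or as the scalar $\sigma_\theta(x_2)$, both unitary invariants. Faithfulness of $\pi$ follows because $I_{\set w}\cong\K$ is essential in $C^*(E_2)$ — every vertex of $E_2$ connects to the sink $w$ — so a faithful representation of $I_{\set w}$ extends to a faithful representation of $C^*(E_2)$; and $\varepsilon_\theta$, $\sigma_\theta$ are not faithful, their kernels containing the nonzero ideals $I_{\set w}$ and $I_{\set{v,w}}$. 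The step I expect to be the main obstacle is the identification of the middle subquotient as $\K\otimes C(S^1)$ and the verification that its extended irreducibles are, on the nose, the operators displayed for $\varepsilon_\theta$; this is exactly where the polar‑decomposition dictionary of Theorem~\ref{thrmE2} has to be used carefully, together with the explicit action of the generators in the path‑space representation.
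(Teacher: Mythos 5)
Your proposal is correct and follows essentially the same route as the paper: peel off the composition series $\K\subset\K\otimes C(S^1)\subset C(S^1)$ coming from the hereditary saturated sets $\set{w}\subset\set{v,w}$, apply $\Irr(A)=\Irr(J)\sqcup\Irr(A/J)$ at each stage, and transport through $\psi$. (One trivial slip: $\varepsilon_\theta(x_1)$ is a \emph{unimodular}, not nonnegative, multiple of a rank-one projection, but your point that $\theta$ is recovered as its unique nonzero eigenvalue stands, and your explicit inequivalence and faithfulness checks are welcome details the paper leaves implicit.)
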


\begin{proof}
$C^*(x_1,x_2)\cong C^*(E_2) = C^*\Bigl(\graphEfourv\Bigr)$.\vspace{-3mm} Hence the ideals and representations of $C^*(x_1,x_2)$ correspond exactly to the ideals and representations of $C^*(E_2)$, which we can obtain using the powerful general theory of graph algebras. As already shown, $\set{w}$ is hereditary and saturated, and the ideal $J_w$ generated by $P_w$ is isomorphic to the compacts. The quotient $C^*(E_2)/J_w$ is isomorphic to the \cstalg\ $C^*(E_2\backslash\set{w})$, so we have an exact sequence
\[ 0\to J_w\to C^*(E_2)\to C^*\Bigl(\graphSthree\Bigr)\to 0. \vspace{-8mm}\]

Hence $\Irr(C^*(E_2))=\Irr(J_w)\sqcup\Irr(\graphSthreeS)$,\vspace{-8mm} where $\Irr$ denotes the irreducible representations. There is only one irreducible representation of $J_w$ up to unitary equivalence, which when extended to the whole algebra is just the representation $\pi$ from before. Composing with the isomorphism $\psi$ gives the representation stated in the Theorem. 

Now $C^*\Bigl(\graphSthree\Bigr)$\vspace{-8mm} is the quantum sphere $C(S_q^3)$, which is the most well understood quantum object. So the exact sequence becomes
\[ 0\to\K\to C(B_q^4)\to C(S_q^3)\to 0, \]
which is a quantum analogue of the classical case. The classical picture is $\R^4\to B^4\to S^3$, in other words the interior $\Int(B^4)$ is isomorphic to $\R^4$ and the boundary $\delta(B^4)$ is isomorphic to $S^3$. Here, the Weyl quantisation of $\R^4$ is $\K$, the quantum ball $C(B_q^4)$ is the quantum deformation of the ball $B^4$, and the quantum sphere $C(S_q^3)$ is the quantum analogue of the sphere $S^3$. So we have a nice analogy with the classical case.

The quotient \cstalg\ $C(S_q^3)$ is not simple, since $\set{v}$ is hereditary and saturated. Let $J_v$ be the ideal generated by $P_v$ in this algebra. We have an exact sequence
\[ 0\to J_v\to C^*\Bigl(\graphSthree\Bigr)\to C^*\Bigl(\graphloop\Bigr)\to 0.\vspace{-10mm} \]
Hence $\Irr(C(S_q^3))=\Irr(J_v)\sqcup\Irr(\graphloopS)$\vspace{-9mm}. The ideal $J_v$ is isomorphic to $K\otimes C(S^1)$ by the following reasoning. Consider all paths $\alpha,\beta$ in the graph for $C(S_q^3)$ with $r(\alpha)=r(\beta)=v$ and such that neither path contains $b$. Then
\[ \set{S_\alpha S_\beta^*\colon r(\alpha)=r(\beta)=v;\; \alpha,\beta\text{ do not contain }b} \]
is a system of matrix units in $J_v$. Hence their closed span is isomorphic to $\K$. Notice that all such paths have the form $a^nc$ for $n\in\N$ or consist of just the single vertex $\set{v}$.

Also $S_b\in J_v$ since $S_b=S_bP_v$. $S_b$ is a partial unitary with $S_bS_b^*=S_b^*S_b=P_v$, and full spectrum $\sigma(S_b)=S^1\cup\set{0}$. Hence $C^*(S_b)\cong C(S^1)$. Now $S_b$ commutes with $S_\alpha S_\beta^*$, so by the general theory of ideals in graph algebras $J_v\cong\K\otimes C(S^1)$.

This ideal has $S^1$ of irreducible representations (in other words, the irreducible representations of this ideal can be parameterised by $S^1$). Fix $\theta\in S^1\subset\C$. Define a representation $\varepsilon_\theta\colon J_v\to\B(\Hi)$, where $\Hi$ is the Hilbert space with orthonormal basis indexed by paths $a^nc$ or $v$, and extend it to a representation on the whole algebra $C(S_q^3)$:

\begin{align*}
\varepsilon_\theta(S_a) \colon & \zeta_v \mapsto 0 \\
                               & \zmapsto{a^nc}{a^{n+1}c}, \\
\varepsilon_\theta(S_c) \colon & \zmapsto{v}{a^0c} \\
                               & \zeta_{a^nc} \mapsto 0, \\
\varepsilon_\theta(S_b) \colon & \zeta_v \mapsto \theta\zeta_v \\
                               & \zeta_{a^nc} \mapsto 0.
\end{align*}
Now compose with the quotient map $\colon C^*(E_2)\to C(S_q^3)$ to get a representation on $C^*(E_2)$. In terms of $S_1$ and $S_2$,
\begin{align*}
\varepsilon_\theta(S_1) \colon & \zeta_v \mapsto \theta\zeta_v \\
                               & \zeta_{a^nc} \mapsto 0, \\
\varepsilon_\theta(S_2) \colon & \zmapsto{v}{a^0c} \\
                               & \zmapsto{a^nc}{a^{n+1}c}.
\end{align*}
Note that $\varepsilon_\theta(S_2)$ is almost identical to the representation $\pi(S_1)$ used previously, where only the labels are different. So by analogous computations and composing with the isomorphism $\psi$, we obtain $\varepsilon_\theta(x_2)$ as in the Theorem. For $\varepsilon_\theta(S_1)$ we have $\varepsilon_\theta(S_1)^*\zeta_v = \bar\theta\zeta_v$, and so $\varepsilon_\theta(S_1)^{*k}\zeta_v = \bar\theta^k\zeta_v$. Also $\varepsilon_\theta(S_1)^{k+1}\zeta_v = \theta^{k+1}\zeta_v$, and so
\[ \varepsilon_\theta(S_1)^{k+1}\varepsilon_\theta(S_1)^{*k}\zeta_v = \theta^{k+1}\bar\theta^k\zeta_v = \theta\zeta_v, \]
since $\theta\bar\theta=1$. Hence
\[ \varepsilon_\theta(z_1)\zeta_v = \sum_{k=0}^\infty \lambda_k\varepsilon_\theta(S_1)^{k+1}\varepsilon_\theta(S_1)^{*k}\zeta_v = \sum_{k=0}^\infty \lambda_k\theta\zeta_v = \left(\sum_{k=0}^\infty \lambda_k\right)\theta\zeta_v = \theta\zeta_v,  \]
because $\sum_{k=0}^\infty \lambda_k=1$. Composing with $\psi$ gives the stated representation.

Finally, we need to classify the representations of $C^*\Bigl(\graphloop\Bigr)$.\vspace{-9mm} This \cstalg\ is isomorphic to $C(S^1)$, which has $S^1$ of irreducible representations. Pick $\theta\in S^1$, and define a representation $\sigma_\theta\colon C(S^1)\to\C$ by
\[ \sigma_\theta(S_a):=\theta, \]
which are the characters of $C(S^1)$. We have $\sigma_\theta(P_u)=1$, and we can compose with the quotient maps to get a representation on $C^*(E_2)$ and then compose with $\psi$ to get the representation of $C^*(x_1,x_2)$ shown in the Theorem. These are all the irreducible representations of $C^*(x_1,x_2)$.
\end{proof}

The representations $\pi$, $\varepsilon_\theta$ and $\sigma_\theta$ comprise all the irreducible representations of $C^*(x_1,x_2)$. Effectively we have a point and two circles of representations:
\[ \centerdot \qquad \text{\circle{20} } \qquad \text{ \circle{20}} \]
We now proceed to the general case $C(B_q^{2n})$.

\section{Quantum \texorpdfstring{$2n$}{2n}-ball \texorpdfstring{$C(B_q^{2n})$}{Cbq2n}}

Let $E_n$ denote the graph obtained by applying the double suspension $n$ times to the single vertex graph. Then $C(B_q^{2n})=C^*(E_n)$ by definition. $E_n$ has $n+1$ vertices which we will label $v_0$ to $v_n$. There is an edge from $v_i$ to $v_j$ labelled $e_{ij}$ if and only if $i\ge j$ and $i\ne 0$:
\[ E_n= \qquad \dotsb
\xygraph{{v_3}="v_3"(:^{e_{33}}@(ul,ur)"v_3" :^{e_{32}}[r(2)]{v_2}="v_2"( :^{e_{22}}@(ul,ur)"v_2" :^{e_{21}}[r(2)]{v_1}="v_1" :^{e_{11}}@(ul,ur)"v_1" :^{e_{10}}[r(2)]{v_0}="v_0", :_{e_{20}}@/_0.7pc/"v_0"), :_{e_{31}}@/_0.7pc/"v_1", :_{e_{30}}@/_1.7pc/"v_0")}
\]
The graph algebra $C^*(E_n)$ is generated by projections $\set{P_i\colon i=0,\dotsc,n}$ and partial isometries $\set{S_{ij}\colon 0\le j\le i\le n}$ subject to the relations
\[ P_iP_j=0 \,\text{ for } i\ne j, \qquad S_{ij}^*S_{ij}=P_j, \qquad P_i=\sum_{j=0}^i S_{ij}S_{ij}^* \,\text{ for }i\ne 0. \]

\subsection{Polynomial Algebra}

\begin{defn}
Consider the free \stalg\ $\F(x_1,\dotsc,x_n)$ and all representations $\pi^{(n)}$ of this algebra into bounded operators on a Hilbert space which satisfy the relations
\begin{align}
x_ix_j &= 0 \,\text{ for } i<j \\
x_i^*x_j &= 0 \,\text{ for } i\ne j \\
\label{eqn:quantumrelations}\qrel{x_n} &= (1-q)1.
\end{align}
In all such representations, the element $\pi^{(n)}(x_n^*x_n)$ is invertible. Hence $x_n$ has a polar decomposition $x_n=\alpha_n\abs{x_n}$, where $\alpha_n=\phase{x_n}$. Define a projection $Q_{n-1}:=1-\alpha_n\alpha_n$. This is nonzero since $\alpha_n\alpha_n^*<1$:
\[ \alpha_n\alpha_n^*=1 \imp x_n\abs{x_n}^{-2}x_n^*=1 \imp x_{n-1}x_n\abs{x_n}^{-2}x_n^* = x_{n-1}, \]
which is a contradiction since $x_{n-1}x_n=0$, and all $x_i$ are nonzero because they are nonzero in the representation $\pi$. Hence we can consider the smaller set of representations $\pi^{(n-1)}$ which satisfy the additional relation
\[ \qrel{x_{n-1}} = (1-q)Q_{n-1}. \]
Then $x_{n-1}Q_{n-1}=x_{n-1}(1-x_n\abs{x_n}^{-2}x_n^*)=x_{n-1}$ and similarly $Q_{n-1}x_{n-1}=x_{n-1}$, so $x_{n-1}$ belongs to the \cstsubalg\ $Q_{n-1}\Hi Q_{n-1}$ in all representations. In this subalgebra the above relation becomes $\qrel{x_{n-1}}=(1-q)1$, and so $x_{n-1}$ has a polar decomposition $x_{n-1}=\alpha_{n-1}\abs{x_{n-1}}$. The phase satisfies $\alpha_{n-1}=x_{n-1}\widetilde{x_{n-1}}$, where $\widetilde{x_{n-1}}$ is the inverse of $\abs{x_{n-1}}$ in this subalgebra. Also $\alpha_{n-1}^*\alpha_{n-1}=Q_{n-1}$, and $\alpha_{n-1}\alpha_{n-1}^*<Q_{n-1}$ since otherwise
\[ \alpha_{n-1}\alpha_{n-1}^*=Q_{n-1} \imp Q_{n-2}=0 \imp \alpha_{n-2}^*\alpha_{n-2}=0 \imp \alpha_{n-2}=0 \imp x_{n-2}=0. \]
So we define the nonzero projection $Q_{n-2}:=Q_{n-1}-\alpha_{n-1}\alpha_{n-1}^*$, and consider the smaller set of representations $\pi^{(n-2)}$ which also satisfy the relation
\[ \qrel{x_{n-2}} = (1-q)Q_{n-2}, \]
and continue this procedure until we have the set of representations $\pi^{(1)}$ or $\pi$, satisfying all the relations
\begin{equation}
\qrel{x_i} = (1-q)Q_i, \qquad \text{for } i=1,\dotsc,n.
\end{equation}
We define $Q_n:=1$ for consistency. We have
\[ Q_{i-1} = Q_i-\alpha_i\alpha_i^* = \alpha_i^*\alpha_i-\alpha_i\alpha_i^* \qquad\text{for } i=2,\dotsc,n. \]
In every representation $\pi$ we have $\norm{\pi(x_i)}=1$ for all $i$. Hence we can define a semi-norm $\norm{\cdot}_0$ on $\F(x_1,\dotsc,x_n)$ by
\[ \norm{a}_0 := \sup\set{\norm{\pi(a)}}. \]
Let $J_0 := \set{a\in\F(x_1,\dotsc,x_n)\colon \norm{a}_0=0}$. One may check this is a two-sided \stideal\ of $\F(x_1,\dotsc,x_n)$. Define the \emph{polynomial algebra} $P(B_q^{2n})$ of the quantum $2n$-ball to be the quotient \stalg\ $\F(x_1,\dotsc,x_n)/J_0$.
\end{defn}

\begin{thrm} \label{thrmEn}
The completion $C^*(x_1,\dotsc,x_n)$ of the polynomial algebra $P(B_q^{2n})$ is isomorphic to the quantum $2n$-ball $C(B_q^{2n})$.
\end{thrm}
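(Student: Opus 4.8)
The plan is to generalise the proof of Theorem~\ref{thrmE2} essentially verbatim, the only genuinely new feature being a downward induction on the index to handle the nested polar decompositions. First I would introduce, for $i=1,\dots,n$, the unilateral-shift-like elements
\[ S_i := \sum_{j=0}^{i} S_{e_{ij}} \in C^*(E_n), \]
and verify from the Cuntz--Krieger relations (using $S_e^*S_f=\delta_{ef}P_{r(e)}$ and orthogonality of the range projections at a common source) that $S_iS_i^*=P_i$ and $S_i^*S_i=\sum_{j=0}^{i}P_j=:R_i$. Since $R_n=1$, the element $S_n$ is a proper isometry; for $i<n$, $S_i$ is a proper isometry in the unital corner $R_iC^*(E_n)R_i$ (with unit $R_i$), so $C^*(S_i)\cong\T$. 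Exactly as in Lemma~\ref{lemB4a} one then expresses each $P_j$ and each $S_{e_{ij}}=S_iP_j$ as a word in the $S_k$ (via $P_i=S_i^*S_i-S_{i-1}^*S_{i-1}$ for $i\ge2$, $P_1=S_1S_1^*$, $P_0=S_1^*S_1-S_1S_1^*$), so that $C^*(E_n)=C^*(S_1,\dots,S_n)$.

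Next I would build the faithful representation. The singleton $\set{v_0}$ is hereditary and saturated, and, as for $E_2$, the ideal $J_{v_0}$ generated by $P_{v_0}$ carries a system of matrix units indexed by the finite paths of $E_n$ ending at the sink $v_0$, so $J_{v_0}\cong\K$; extending its unique irreducible representation to $C^*(E_n)$ gives a faithful representation $\pi$ on $\ell^2$ of those paths. Setting $z_i:=\sum_{k=0}^\infty\lambda_kS_i^{k+1}(S_i^*)^k$ with $\lambda_k=\sqrt{1-q^{k+1}}-\sqrt{1-q^k}$, the computation of $\pi(z_i)$ proceeds as in \S\ref{secBq4}: on each basis vector only finitely many terms survive and $z_i$ acts as a weighted shift in the ``$v_i$-direction'' of the path. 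Reading off the images of $z_iz_j$, $z_i^*z_j$, $z_i^*z_i$, $z_iz_i^*$ and invoking faithfulness of $\pi$ yields
\begin{align*}
 z_iz_j &= 0 \ \ (i<j), & z_i^*z_j &= 0 \ \ (i\ne j), \\
 \qrel{z_n} &= (1-q)1, & \qrel{z_i} &= (1-q)R_i \ \ (i<n),
\end{align*}
and one checks that $R_i$ coincides with the projection $Q_i$ occurring in the definition of $P(B_q^{2n})$, since matching the phase of $x_i$ with $S_i$ gives $Q_{n-1}=1-S_nS_n^*=1-P_n=R_{n-1}$ and $Q_{i-1}=Q_i-S_iS_i^*=R_i-P_i=R_{i-1}$ by downward recursion.

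Then comes the key step, generalising Lemma~\ref{lemB4b}: recovering the $S_i$ from the $z_i$ by descending induction. The element $z_n^*z_n$ is invertible in $C^*(E_n)$, so $z_n$ has a polar decomposition whose phase (computed in $\pi$) is $S_n$, whence $S_n$ and $R_{n-1}=1-S_nS_n^*$ lie in $C^*(z_1,\dots,z_n)$. Assuming inductively that $R_i$ and $S_{i+1},\dots,S_n$ lie in $C^*(z_1,\dots,z_n)$, the relation $\qrel{z_i}=(1-q)R_i$ shows $z_i^*z_i\ge(1-q)R_i$ is invertible in the unital corner $R_iC^*(E_n)R_i$, so (by spectral permanence, since $R_i\in C^*(z_1,\dots,z_n)$) $z_i$ has there a polar decomposition with phase $S_i\in C^*(z_1,\dots,z_n)$, and then $R_{i-1}=R_i-S_iS_i^*\in C^*(z_1,\dots,z_n)$; this closes the induction and gives $C^*(E_n)=C^*(S_1,\dots,S_n)=C^*(z_1,\dots,z_n)$. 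Finally, with $\alpha_i:=\phase{x_i}$, define $\psi(x_i):=z_i$ (well defined because $z_1,\dots,z_n$ satisfy exactly the defining relations of $P(B_q^{2n})$) and $\phi$ on the Cuntz--Krieger generators by $\phi(P_i):=\alpha_i\alpha_i^*=Q_i-Q_{i-1}$ for $i\ge1$ (with $Q_n:=1$), $\phi(P_0):=Q_0$, and $\phi(S_{e_{ij}}):=\alpha_i\phi(P_j)$. Using $\alpha_i^*\alpha_i=Q_i$, $\alpha_i\alpha_i^*\le Q_i$, $Q_0\le Q_1\le\dots\le Q_n=1$ and $\sum_{j=0}^{i}\phi(P_j)=Q_i$, one checks $\phi$ preserves the Cuntz--Krieger relations and that $\phi(S_i)=\alpha_iQ_i=\alpha_i$ by telescoping; hence $\psi\circ\phi=\id$ on all generators of $C^*(E_n)$. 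For $\phi\circ\psi=\id$ one first shows $C^*(x_i)\cong C(\D_q)$ (using $P=(1-q)^{-1}\qrel{x_i}$-type corners and universality of the quantum disc together with the $z_i$ model), which forces $\alpha_i\alpha_i^*<\alpha_i^*\alpha_i$, so all $Q_{i-1}\ne0$ and the images under $\phi$ of the Cuntz--Krieger generators are nonzero; then $\phi\psi(x_i)=\sum_k\lambda_k\alpha_i^{k+1}\alpha_i^{*k}=x_i$ by computing inside $C^*(z_i)\subset C^*(E_n)$. Since both composites are the identity on generators, $\phi$ and $\psi$ are mutually inverse isomorphisms.

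I expect the main obstacle to be bookkeeping rather than conceptual: explicitly enumerating the finite paths of $E_n$ ending at $v_0$ and pinning down the action of each $\pi(z_i)$ on them for general $n$ (in the $4$-ball case these were the paths $w$, $b^ne$, $a^nd$, $a^mcb^ne$; for general $n$ a path from $v_i$ to $v_0$ threads down through a strictly decreasing sequence of vertices with loop-segments in between, giving a combinatorially richer but still completely explicit family), and --- relatedly --- justifying the strict inequalities $\alpha_i\alpha_i^*<Q_i$, i.e. that every $x_i$ is nonzero, which as in the $4$-ball case cannot be read off from the relations alone and must be imported from the graph-algebra model through the faithful representation $\pi$.
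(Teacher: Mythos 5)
Your proposal follows essentially the same route as the paper's proof: the same generators $S_i=\sum_j S_{ij}$ and $z_i=\sum_k\lambda_kS_i^{k+1}(S_i^*)^k$, the same faithful path-space representation to extract the relations, the same recovery of $S_i$ as the phase of $z_i$ in the corner $(P_0+\dotsb+P_i)C^*(E_n)(P_0+\dotsb+P_i)$, and the same pair of maps $\phi,\psi$ on generators with $C^*(x_i)\cong C(\D_q)$ used to force nonvanishing. The only (welcome) refinement is that your descending induction makes explicit that each cumulative projection $R_{i-1}=R_i-S_iS_i^*$ lies in $C^*(z_1,\dotsc,z_n)$ before the next corner is formed, a point the paper only spells out in the $4$-ball case.
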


\begin{lemma}
Define $R_i := \alpha_i\alpha_i^*$. Then we have the following relations
\begin{enumerate}
\item $Q_{i-1}=Q_i-R_i$
\item $Q_{i-1}=1-\sum_{j=i}^n R_j$ for $i=2,\dotsc,n$
\item $R_i<Q_j$ for $i\le j$
\item $R_iR_j=0$ for $i\ne j$
\item $Q_iR_j=0$ for $i<j$
\end{enumerate}
\end{lemma}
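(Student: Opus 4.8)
The plan is to treat (1) and (2) as bookkeeping, to extract (4) by playing the vanishing relations $x_i^*x_j=0$ against the polar decompositions of the $x_i$, and then to read off (5) and (3) from (1), (2), (4) together with facts already recorded in the construction.

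First I would dispose of (1) and (2). Relation (1) is essentially the recursive definition $Q_{i-1}:=Q_i-\alpha_i\alpha_i^*=Q_i-R_i$; the only thing to check is that this is genuinely a projection, which holds because $R_i=\alpha_i\alpha_i^*$ is a subprojection of $\alpha_i^*\alpha_i=Q_i$ (indeed $R_i<Q_i$ strictly, as was noted when $Q_{i-1}$ was introduced). Relation (2) then follows by a downward induction on $i$: unwinding (1) repeatedly gives $Q_{i-1}=Q_i-R_i=Q_{i+1}-R_{i+1}-R_i=\dotsb=Q_n-\sum_{j=i}^n R_j$, and $Q_n=1$.

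The substantive step is (4), and the key is the auxiliary identity $\alpha_i^*\alpha_j=0$ for $i<j$. Starting from $x_i^*x_j=0$ and writing $x_i^*=\abs{x_i}\alpha_i^*$, this reads $\abs{x_i}\alpha_i^*x_j=0$; multiplying on the left by $\widetilde{x_i}$, the inverse of $\abs{x_i}$ in the corner $Q_i\Hi Q_i$, and using $\widetilde{x_i}\abs{x_i}=Q_i$ and $Q_i\alpha_i^*=\alpha_i^*$, gives $\alpha_i^*x_j=0$. Substituting $x_j=\alpha_j\abs{x_j}$ and multiplying on the right by $\widetilde{x_j}$, with $\abs{x_j}\widetilde{x_j}=Q_j$ and $\alpha_jQ_j=\alpha_j$, yields $\alpha_i^*\alpha_j=0$. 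Hence $R_iR_j=\alpha_i(\alpha_i^*\alpha_j)\alpha_j^*=0$ for $i<j$, and for $i>j$ relation (4) follows by taking adjoints. Relation (5) admits the same kind of proof from $x_ix_j=0$ (one first gets $Q_ix_j=0$, then $Q_i\alpha_j=0$, then $Q_iR_j=Q_i\alpha_j\alpha_j^*=0$); but once (2) and (4) are in hand it is quicker to write $Q_i=1-\sum_{k>i}R_k$ and compute $Q_iR_j=R_j-\sum_{k>i}R_kR_j=R_j-R_j=0$, the only surviving term being $R_j^2=R_j$.

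Finally (3): by (1) the $Q$'s form an increasing chain $Q_0\le Q_1\le\dotsb\le Q_n=1$, so $R_i\le Q_i\le Q_j$ for $i\le j$; strictness is immediate from $Q_j-R_i\ge Q_i-R_i=Q_{i-1}$ together with $Q_{i-1}\ne 0$ (recorded when it was defined, and in any case each $Q_i$ is nonzero in the graph-algebra model $C^*(E_n)$), so $R_i\ne Q_j$. The main obstacle is the computation behind (4): passing from an algebraic relation between the \emph{generators} $x_i,x_j$ to one between their \emph{phases} $\alpha_i,\alpha_j$ requires the invertibility of $\abs{x_i}$ in the appropriate corner and careful tracking of which projections absorb which partial isometries. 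Once that identity is available, everything else is formal manipulation of mutually orthogonal projections.
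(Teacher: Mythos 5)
Your proposal is correct, and on items (1), (2), (3) and (5) it follows essentially the paper's own proof: the telescoping $Q_{i-1}=Q_n-R_n-\dotsb-R_i$ for (2), the expansion $Q_iR_j=(1-\sum_{k>i}R_k)R_j=R_j-R_j=0$ for (5), and the same deferral of the strict inequality $R_1<Q_1$ to the fact that $C^*(x_1)\cong C(\D_q)$ (equivalently, to a model of the relations in which $Q_1-R_1\ne 0$) for (3). The genuine divergence is item (4). The paper stays entirely inside the projection lattice: it writes $R_i=Q_i-Q_{i-1}$, $R_j=Q_j-Q_{j-1}$ and uses only the nesting $Q_0\le Q_1\le\dotsb\le Q_n$ (already secured by $R_i\le Q_i$ from the construction) to get $R_iR_j=(Q_i-Q_{i-1})(Q_j-Q_{j-1})=Q_i-Q_i-Q_{i-1}+Q_{i-1}=0$ for $1<i<j$, with the case $i=1$ handled by $R_1\le Q_1\le Q_{j-1}$. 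You instead return to the defining relation $x_i^*x_j=0$ and, using the corner inverses $\widetilde{x_i}$ together with the absorption identities $Q_i\alpha_i^*=\alpha_i^*$ and $\alpha_jQ_j=\alpha_j$ (both valid since $\alpha_i^*\alpha_i=Q_i$), extract the stronger statement $\alpha_i^*\alpha_j=0$, from which (4) is immediate. Your route costs a little more bookkeeping, but it proves orthogonality of the partial isometries themselves rather than merely of their range projections --- an identity of independent use when one later checks that $\phi$ preserves the Cuntz--Krieger relations in the proof of Theorem \ref{thrmEn} --- whereas the paper's argument is shorter and purely formal. Both are sound.
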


\begin{proof}
\begin{enumerate}
\item Follows from the definition.
\item $Q_{i-1} = Q_i-R_i = Q_{i+1}-R_{i+1}-R_i = Q_{i+2}-R_{i+2}-R_{i+1}-R_i = \dotsb = Q_n-R_n-\dotsb-R_i = 1-\sum_{j=i}^n R_j$.
\item $R_i<Q_i$ already shown for $i>1$. Also we will see in the proof of the Theorem that $R_1<Q_1$, since $C^*(x_1)\cong C(\D_q)$. Then $R_i<Q_i\le Q_j$ for $i\le j$.
\item Suppose $1<i<j$. Then $R_iR_j = (Q_i-Q_{i-1})(Q_j-Q_{j-1}) = Q_i-Q_i-Q_i+Q_i = 0$. If $1=i<j$ then $R_1R_j = R_1(Q_j-Q_{j-1}) = R_1-R_1 = 0$.
\item $Q_iR_j = (1-\sum_{k=i+1}^n R_k)R_j = R_j-R_j = 0$.
\end{enumerate}
\end{proof}

Notice that the hereditary saturated subsets of $E_n^0$ are $\set{v_0,\dotsc,v_i}$ for $i=0,\dotsc,n$. Hence we define $S_1,\dotsc,S_n$ by
\[ S_i := \sum_{j=0}^i S_{ij} = S_{i0}+S_{i1}+\dotsb+S_{ii}, \]
similarly to before.

\begin{lemma}\label{lemBna}
$C^*(E_n)=C^*(S_1,\dotsc,S_n)$.
\end{lemma}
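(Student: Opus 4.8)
The plan is to mimic the argument of Lemma \ref{lemB4a}, but organized inductively using the quantum-double-suspension structure of $E_n$. Since $C^*(E_n) = \clspan\{S_\mu S_\nu^* : \mu,\nu\in E_n^*\}$ is generated by the $P_i$ and $S_{ij}$, it suffices to recover all of these generators from $S_1,\dots,S_n$. First I would record the behaviour of the $S_i$ analogous to the $n=2$ case: each $S_i = \sum_{j=0}^i S_{ij}$ satisfies $S_i^*S_i = \sum_j S_{ij}^*S_{ij} = \sum_{j=0}^i P_j$ (using $P_jP_k=0$ and $S_{ij}^*S_{ij}=P_j$), while $S_iS_i^* = \sum_j S_{ij}S_{ij}^* = P_i$ by the Cuntz–Krieger relation at $v_i$ (valid since $i\ne 0$). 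Hence $S_iS_i^* = P_i$ and $S_i^*S_i = P_0+P_1+\dots+P_i$ for every $i=1,\dots,n$.

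Next I would extract the projections. From the two identities above, $P_i = S_iS_i^*$ for $i\ge 1$, and $P_0 + \dots + P_{i} = S_i^*S_i$, so by subtraction $P_i = S_i^*S_i - S_{i-1}^*S_{i-1}$ for $i\ge 2$ (using $S_{i-1}^*S_{i-1}=P_0+\dots+P_{i-1}$), while $P_0 = S_1^*S_1 - S_1S_1^*$. Thus every $P_i$ lies in $C^*(S_1,\dots,S_n)$. It then remains to recover each edge isometry $S_{ij}$ for $0\le j\le i\le n$. Here I would use that $S_{ij} = S_{ij}S_{ij}^*S_{ij} = (S_iS_i^*)\,? $ — more precisely, I claim $S_{ij} = S_i\,P_j = S_i\,(P_0+\dots+P_j) - S_i(P_0+\dots+P_{j-1})$. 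Indeed $S_iP_j = \sum_{k=0}^i S_{ik}P_j = S_{ij}$ because $S_{ik}P_j = S_{ik}S_{ik}^*S_{ik}\,?$ uses $S_{ik}^*S_{ik}=P_k$ and $P_kP_j=\delta_{kj}P_j$; so $S_{ik}P_j = S_{ik}S_{ik}^*S_{ik}P_k P_j = 0$ unless $k=j$. Hence $S_{ij} = S_i P_j \in C^*(S_1,\dots,S_n)$, and since each $P_j$ was already shown to lie in this subalgebra, so does every $S_{ij}$. Therefore $C^*(E_n) = C^*(S_1,\dots,S_n)$.

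The main point requiring care — and the only real obstacle — is the bookkeeping that identifies $S_i^*S_i$ with the correct \emph{partial} sum of the $P_j$: this is exactly where the specific edge-set of $E_n$ (an edge $e_{ij}$ precisely when $i\ge j$ and $i\ne 0$) is used, and it is what makes the telescoping $P_i = S_i^*S_i - S_{i-1}^*S_{i-1}$ work. One should double-check the boundary case $i=1$ separately (where $S_1^*S_1 = P_0+P_1$ and $S_1S_1^* = P_1$, giving $P_0 = S_1^*S_1 - S_1S_1^*$), and note that $v_0$ is a sink so there is no Cuntz–Krieger relation to invoke there; the projection $P_0$ is recovered purely from $S_1$. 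Everything else is a direct verification using (G1)–(G3), entirely parallel to Lemma \ref{lemB4a}, so I would present the formulas for $P_i$ and $S_{ij}$ in terms of $S_1,\dots,S_n$ and remark that the identities are checked just as in the $n=2$ case.
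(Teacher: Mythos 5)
Your proposal is correct and follows essentially the same route as the paper's proof: compute $S_i^*S_i=P_0+\dotsb+P_i$ and $S_iS_i^*=P_i$, recover $P_0$ as $S_1^*S_1-S_1S_1^*$ (since $v_0$ is a sink), and then recover $S_{ij}=S_iP_j$. The extra telescoping identity $P_i=S_i^*S_i-S_{i-1}^*S_{i-1}$ is harmless but unnecessary, since $P_i=S_iS_i^*$ already does the job for $i\ge 1$.
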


\begin{proof}
By the Cuntz-Krieger relations,
\[ S_i^*S_i = S_{i0}^*S_{i0}+\dotsb+S_{ii}^*S_{ii} = P_0+\dotsb+P_i. \]
Also
\[ S_iS_i^* = \sum_{j,k=0}^i S_{ij}S_{ik}^* = \sum_{j=0}^i S_{ij}S_{ij}^* = P_i \]
for $i=1,\dotsc,n$ and we also have $P_0 = (P_0+P_1)-P_1 = S_1^*S_1-S_1S_1^*$. So we can recover the projections. Then
\[ S_{ij} = S_{ij}P_j = S_iP_j, \]
so we can recover the partial isometries as well. 
\end{proof}

The above results show that $S_i$ is a partial isometry for $i=1,\dotsc,n-1$ and that $S_n$ is a proper isometry. Now define $z_1,\dotsc,z_n$ by
\[ z_i := \sum_{k=0}^\infty \lambda_k S_i^{k+1}(S_i^*)^k. \]
We again make computations in a representation $\pi\colon C^*(E_n)\to\B(\Hi)$, where $\Hi$ is the Hilbert space with orthonormal basis indexed by the paths of finite length in $E_n$ ending at $v_0$. We define $\pi$ on $\set{S_f\colon f\in E_n^1}$ by
\[ \pi(S_f)\zeta_\mu := \casesother{\zeta_{f\mu}}{r(f)=s(\mu)}{0} \]
which extends to a representation on all of $C^*(E_n)$ by the universal property. It can be shown that $\pi$ satisfies the Cuntz-Krieger relations. This representation is both faithful and irreducible. Intuitively, $S_f$ adds the edge $f$, if possible.

\subsubsection{Paths in $E_n$}
We can list all the finite paths in $E_n$ ending at $v_0$ by a recursive procedure. Let $\Gamma_i$ be the set of such paths starting at $v_i$:
\[ \Gamma_i := \set{\mu\in E_n^*\colon s(\mu)=v_i,\, r(\mu)=v_0}. \]
Then $\Gamma_n := \bigcup_{i=0}^n \Gamma_i$ is the set of all paths ending at $v_0$. There is only one such path starting at $v_0$, the $0$-length path consisting of the vertex only, so $\Gamma_0=\set{v_0}$. Paths starting at $v_1$ have the form $e_{11}^{m_1}e_{10}\in\Gamma_1$ for some $m_1\in\N$, where $\N$ includes $0$. To list the set of all paths starting at $v_2$, note that such paths have the form $e_{22}^{m_2}e_{20}$ or $e_{22}^{m_2}e_{10}\Gamma_1$, where the operation is concatenation, i.e. follow $e_{22}^{m_2}e_{10}$ by any path starting at $v_1$. In general,
\[ \Gamma_i = \bigcup_{j=0}^{i-1} e_{ii}^{m_i}e_{ij}\Gamma_j, \]
where $m_i\in\N$.

We can categorise the paths as follows. Say two paths are equivalent if they differ only by the number of loops around a vertex, that is, once the loops are removed the paths are identical. The number of equivalence classes of paths starting from $v_i$ for $i\ge 1$ is $2^{i-1}$. Hence the total number of classes of paths is $2^n$.

\subsubsection{Representation $\pi$ of $C^*(E)$}

We know $\pi(S_{ij})$ acts by adding the edge $e_{ij}$, if possible. So $\pi(S_{ij})$ is nonzero precisely on paths which start at $v_j$. Such paths have the form $e_{jj}^{m_j}\cdots$ and
\[ \pi(S_{ij})\zeta_{e_{jj}^{m_j}\cdots} = \casesif{\zeta_{e_{ii}^{m_i+1}\cdots}}{i=j}{\zeta_{e_{ii}^0e_{ij}e_{jj}^{m_j}\cdots}}{i\ne j.} \]
Now $\pi(S_i)$ acts by adding an edge from the set $\set{e_{ij}\colon j\le i}$. There is at most one such edge from this set which can be added to a path. In fact $\pi(S_i)$ is nonzero precisely on paths starting at $v_j$ where $j\le i$, in which case
\[ \pi(S_i)\zeta_{e_{jj}^{m_j}\cdots} = \casesif{\zeta_{e_{ii}^{m_i+1}\cdots}}{i=j}{\zeta_{e_{ii}^0e_{ij}e_{jj}^{m_j}\cdots}}{i>j.} \]
So
\[ \pi(S_i)^{k+1}\zeta_{e_{jj}^{m_j}\cdots} = \casesif{\zeta_{e_{ii}^{m_i+k+1}\cdots}}{i=j}{\zeta_{e_{ii}^ke_{ij}e_{jj}^{m_j}\cdots}}{i>j.} \]
$\pi(S_i)^*$ acts by removing an edge starting at $v_i$, if possible. Hence it is nonzero precisely on paths $e_{ii}^{m_i}e_{ij}\mu$ starting at $v_i$, where $\mu$ is the tail of the path, and then
\[ \pi(S_i)^*\zeta_{e_{ii}^{m_i}e_{ij}\mu} = \casesif{\zeta_\mu}{m_i=0}{\zeta_{e_{ii}^{m_i-1}e_{ij}\mu}}{m_i\ge 1,} \]
so
\[ \pi(S_i)^{*k}\zeta_{e_{ii}^{m_i}e_{ij}\mu} = \casesif{\zeta_\mu}{m_i=k-1}{\zeta_{e_{ii}^{m_i-k}e_{ij}\mu}}{m_i\ge k,} \]
and is zero if $m_i<k-1$ and on all other basis elements. So the composition satisfies
\[ \pi(S_i)^{k+1}\pi(S_i)^{*k}\zeta_{e_{ii}^{m_i}\cdots} = \zeta_{e_{ii}^{m_i+1}\cdots} \]
if $k\le m_i+1$, and is zero otherwise. Hence the sum expressing $\pi(z_i)$ will be finite on any given basis element $\zeta_\alpha$.

Separating the $k=0$ term from the other terms in the expression for $\pi(z_i)$, we have
\[ \pi(z_i) = \sqrt{1-q}\,\pi(S_i) + \sum_{k=1}^{m_i+1}\lambda_k\pi(S_i)^{k+1}\pi(S_i)^{*k}, \]
and so
\[ \pi(z_i)\zeta_{e_{jj}^{m_j}\cdots} = \begin{cases}
  \sqrt{1-q}\zeta_{e_{ii}^0e_{ij}e_{jj}^{m_j}\cdots} & \text{if } j<i \\
  \sqrt{1-q^{m_i+2}}\zeta_{e_{ii}^{m_i+1}\cdots}     & \text{if } j=i \\
  0                                                  & \text{if } j>i,
\end{cases} \]
by similar calculations to those done for $C(B_q^4)$. Hence the adjoint $\pi(z_i)^*$ is nonzero only on paths starting with $e_{ii}^{m_i}$, and satisfies
\[ \pi(z_i)^*\zeta_{e_{ii}^{m_i}\mu} = \casesif{\sqrt{1-q}\zeta_\mu}{m_i=0}{\sqrt{1-q^{m_i+1}}\zeta_{e_{ii}^{m_i-1}\mu}}{m_i\ge 1,} \]
on such paths, where $\mu$ is the tail of the path. One may then show the results
\begin{align}
x_ix_j     &= 0 \quad\text{for }i<j     \label{Bneq1} \\
x_i^*x_j   &= 0 \quad\text{for }i\ne j  \label{Bneq2} \\
\qrel{x_i} &= (1-q)(P_0+P_1+\dotsb+P_i) \label{Bneq3}
\end{align}
where the last relation follows from:
\[ \pi(x_i^*x_i)\zeta_{e_{jj}^{m_j}\cdots} = \casesif{(1-q^{m_i+2})\zeta_{e_{ii}^{m_i}\cdots}}{i=j}{(1-q)\zeta_{e_{jj}^{m_j}\cdots}}{j<i} \]
and
\[ \pi(x_ix_i^*)\zeta_{e_{ii}^{m_i}\cdots} = (1-q^{m_i+1})\zeta_{e_{ii}^{m_i}\cdots} \]
where these operators are zero on all other paths.

Intuitively, $x_ix_j=0$ for $i<j$ since $\pi(x_j)$ adds an edge starting from $v_j$, if possible. But then we cannot add on an edge starting from $v_i$, as there is no path from $v_i$ to $v_j$ if $i<j$. Intuitively, $x_i^*x_j=0$ for $i\ne j$ since $\pi(x_j)$ adds an edge starting from $v_j$, if possible. But $x_i^*$ subtracts an edge starting from $v_i$ if possible, so the composition is zero for $i\ne j$.

\begin{lemma}
$C^*(E_n)=C^*(z_1,\dotsc,z_n)$.
\end{lemma}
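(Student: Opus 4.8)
The plan is to mirror the proof of Lemma~\ref{lemB4b} that $C^*(E_2)=C^*(z_1,z_2)$, reorganised as a downward induction on the index. By Lemma~\ref{lemBna} it is enough to prove $C^*(S_1,\dotsc,S_n)=C^*(z_1,\dotsc,z_n)$. One inclusion is immediate: each $z_i=\sum_{k\ge 0}\lambda_k S_i^{k+1}(S_i^*)^k$ is a norm-convergent sum of polynomials in $S_i$ and $S_i^*$ (the $\lambda_k\ge 0$ telescope to $1$ and the $S_i^{k+1}(S_i^*)^k$ are contractions), so $z_i\in C^*(S_i)$ and hence $C^*(z_1,\dotsc,z_n)\subseteq C^*(S_1,\dotsc,S_n)=C^*(E_n)$. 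The content is the reverse inclusion, for which it suffices to recover each $S_i$ from the $z_j$.

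I would first recover $S_n$. From $\qrel{z_n}=(1-q)1$ one has $z_n^*z_n=qz_nz_n^*+(1-q)1\ge(1-q)1$, so $z_n^*z_n$ is invertible in the unital algebra $C^*(E_n)$; in particular $1=(1-q)^{-1}(z_n^*z_n-qz_nz_n^*)\in C^*(z_n)$, and $z_n$ has a polar decomposition $z_n=\alpha_n\abs{z_n}$ with phase $\alpha_n=\phase{z_n}\in C^*(z_n)$. Using the explicit formulas for $\pi(z_n)$ and $\pi(z_n^*z_n)$ in the faithful representation $\pi$, one checks that $\pi(\alpha_n)=\pi(z_n)\pi(z_n^*z_n)^{-1/2}$ acts on each basis vector exactly as $\pi(S_n)$, so $S_n\in C^*(z_1,\dotsc,z_n)$, and then $Q'_{n-1}:=P_0+\dotsb+P_{n-1}=1-P_n=1-S_nS_n^*$ lies in $C^*(z_1,\dotsc,z_n)$ as well.

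Now I would run the inductive step for $i=n-1,n-2,\dotsc,1$: suppose $Q'_i:=P_0+\dotsb+P_i\in C^*(z_1,\dotsc,z_n)$. From the action of $\pi(z_i)$ on basis vectors $\zeta_\mu$ with $r(\mu)=v_0$ --- it kills those with $s(\mu)=v_j$, $j>i$, and sends the rest to paths starting at $v_i$ --- both the domain and range of $\pi(z_i)$ lie in $\range\pi(Q'_i)$, so by faithfulness $z_i=Q'_iz_iQ'_i$ and $z_i$ belongs to the corner $Q'_iC^*(E_n)Q'_i$, which is unital with unit $Q'_i$. In that corner the relation $\qrel{z_i}=(1-q)Q'_i$ gives $z_i^*z_i=qz_iz_i^*+(1-q)Q'_i\ge(1-q)Q'_i$, so $z_i^*z_i$ is invertible in the corner and $z_i$ has a polar decomposition $z_i=\alpha_i\abs{z_i}$ whose phase $\alpha_i=z_i\widetilde{\abs{z_i}}$ (inverse taken in the corner) lies in $C^*(z_i,Q'_i)\subseteq C^*(z_1,\dotsc,z_n)$. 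Evaluating in $\pi$ as before shows $\pi(\alpha_i)=\pi(S_i)$, so $S_i\in C^*(z_1,\dotsc,z_n)$; then $Q'_{i-1}=Q'_i-S_iS_i^*\in C^*(z_1,\dotsc,z_n)$, which closes the induction. Hence $C^*(S_1,\dotsc,S_n)\subseteq C^*(z_1,\dotsc,z_n)$, and together with the first inclusion and Lemma~\ref{lemBna} we conclude $C^*(E_n)=C^*(z_1,\dotsc,z_n)$.

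The main obstacle is the bookkeeping around the corners: at each stage one needs the cut-down projection $Q'_i$ to be \emph{already known} to lie in $C^*(z_1,\dotsc,z_n)$ --- which is exactly what the downward induction supplies, starting from $Q'_n=1\in C^*(z_n)$ --- and one needs $z_i$ to genuinely sit inside $Q'_iC^*(E_n)Q'_i$, which is where the explicit description of $\pi(z_i)$ on paths ending at $v_0$ is used. Once these are in place, the identifications $\pi(\alpha_i)=\pi(S_i)$ are short computations of exactly the kind already carried out for $C(B_q^4)$.
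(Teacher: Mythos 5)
Your proof is correct and follows essentially the same route as the paper's: recover each $S_i$ as the phase of $z_i$ via a polar decomposition taken in the corner $(P_0+\dotsb+P_i)C^*(E_n)(P_0+\dotsb+P_i)$, with the identification $\pi(\alpha_i)=\pi(S_i)$ checked in the faithful representation. Your downward induction, which makes each corner unit $Q'_i$ available inside $C^*(z_1,\dotsc,z_n)$ before the inverse $\widetilde{\abs{z_i}}$ is invoked, is a point the paper leaves implicit; it is a welcome tightening rather than a genuinely different argument.
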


\begin{proof}
We will show that $C^*(z_1,\dotsc,z_n)=C^*(S_1,\dotsc,S_n)$, from which the result follows. Now $\abs{z_n}$ is invertible, so $z_n$ has a unique polar decomposition with phase $\phase{z_n}$ which we can readily show is $S_n$ by computing in the representation. For $i\le n-1$, $\abs{z_i}$ is invertible in the unital \cstsubalg\ $(P_0+\dotsb+P_i)C^*(E)(P_0+\dotsb+P_i)$. Write $\widetilde{z_i}$ for the inverse of $\abs{z_i}$ in the \cstsubalg. Then $\widetilde{z_i}\abs{z_i}=\abs{z_i}\widetilde{z_i}=P_0+\dotsb+P_i$, and $z_i$ is polar decomposable with phase $z_i\widetilde{z_i}$. Calculating using the representation, we may show that this phase is $S_i$. So we can recover $S_i$ from $x_i$, $i=1,\dotsc,n$, and so $C^*(z_1,\dotsc,z_n)=C^*(S_1,\dotsc,S_n)=C^*(E_n)$.
\end{proof}

\begin{proof}[Proof of Theorem \ref{thrmEn}]
We want to find homomorphisms
\[ \morphgraph{C^*(E_n)}{\qquad\quad\; C^*(x_1,\dotsc,x_n)} \]
Define
\[ \psi(x_i) := z_i, \qquad i=1,\dotsc,n. \]
Define $\psi$ by the results of Lemma \ref{lemBna}:
\begin{align*}
\phi(P_0)    &:= \alpha_1^*\alpha_1-\alpha_1\alpha_1^* & \phi(P_j) &:= \alpha_j\alpha_j^* \quad\text{for }j\ge 1 \\
\phi(S_{i0}) &:= \alpha_i(\alpha_1^*\alpha_1-\alpha_1\alpha_1^*) & \phi(S_{ij}) &:= \alpha_i\alpha_j\alpha_j^* \quad\text{for }j\ge 1.
\end{align*}
These images are projections and partial isometries, as required. We must show that these maps preserve the relations. Equations \eqref{Bneq1}-\eqref{Bneq3} show that $\psi$ preserves the relations for $C^*(x_1,\dotsc,x_n)$, if we can show that $\psi(Q_i)=P_0+\dotsb+P_i$. This is trivial for $i=n$ since $\psi(1)=1$, and for the other cases,
\[ \psi(Q_{i-1}) = \psi(\alpha_i^*\alpha_i-\alpha_i\alpha_i^*) = S_i^*S_i-S_iS_i^* = P_0+\dotsb+P_{i-1} \]
since $\psi(\alpha_i)=S_i$, because phase is preserved by a homomorphism.

To show $\phi$ extends to a homomorphism, we must show it preserves the Cuntz-Krieger relations. Firstly $\phi(S_i)=\alpha_i$ since
\begin{multline*}
\phi(S_i) = \phi(S_{i0}+S_{i1}+\dotsb+S_{ii}) = \alpha_i(\alpha_1^*\alpha_1-\alpha_1\alpha_1^*+\alpha_1\alpha_1^*+\dotsb+\alpha_i\alpha_i^*) \\
= \alpha_i(Q_1-R_1+R_1+\dotsb+R_i) = \alpha_iQ_i = \alpha_i.
\end{multline*}
We also have $\phi(P_i)=R_i$ for $i=1,\dotsc,n$, and $\phi(P_0)=Q_1-R_1$. Also $\phi(P_0+\dotsb+P_i)=Q_i$. Then:
\subsubsection*{(i) $P_iP_j=0$ for $i\ne j$}
Since $P_iP_j=P_jP_i$ we can assume $i<j$ without loss of generality.
  \begin{itemize}
  \item Case $0=i<j$. Then $\phi(P_0)\phi(P_j) = (Q_1-Q_1)R_j = Q_1R_j-R_1R_j$. But
  \[ Q_1R_j-R_1R_j = Q_1R_1-R_1R_1 = R_1-R_1 = 0 \]
  if $j=1$, and $Q_1R_j-R_1R_j = 0$ for $j\ge 2$.
  \item Case $0<i<j$. Then $\phi(P_i)\phi(P_j) = R_iR_j = 0$.
  \end{itemize}
\subsubsection*{(ii) $S_{ij}^*S_{ij}=P_j$}
Note $\phi(S_{ij}) = \phi(S_iP_j) = \alpha_i\phi(P_j)$.
  \begin{itemize}
  \item Case $j=0$. Then $\phi(S_{i0}) = \alpha_i(Q_1-R_1)$, so
  \begin{align*}
  \phi(S_{i0})^*&\phi(S_{i0}) = (Q_1-R_1)\alpha_i^*\alpha_i(Q_1-R_1) = (Q_1-R_1)Q_i(Q_1-R_1) \\
  &= (Q_1-R_1)^2 = Q_1-R_1 = \phi(P_0).
  \end{align*}
  \item Case $j\ge 1$. Then $\phi(S_{ij}) = \alpha_iR_j$, so
  \[ \phi(S_{ij})^*\phi(S_{ij}) = R_j\alpha_i^*\alpha_iR_j = R_jQ_iR_j = R_j^2 = R_j = \phi(P_j). \]
  \end{itemize}
\subsubsection*{(iii) $P_i = \sum_{j=0}^i S_{ij}S_{ij}^* = S_{i0}S_{i0}^*+\sum_{j=1}^i S_{ij}S_{ij}^*$ for $i\ge 1$}
Then
\begin{align*}
\phi\Bigl(\sum_{j=0}^i S_{ij}S_{ij}^*\Bigr) &= \alpha_i(Q_1-R_1)\alpha_i^* + \sum_{j=1}^i\alpha_iR_j\alpha_i^* \\
&= \alpha_i(Q_1-R_1+R_1+\dotsb+R_i)\alpha_i^* \\
&= \alpha_iQ_i\alpha_i^* = \alpha_i\alpha_i^*\alpha_i\alpha_i^* = R_i^2 = R_i = \phi(P_i),
\end{align*}
since $Q_1-R_1+R_1+\dotsb+R_i = Q_1+R_2+\dotsb+R_i = Q_2+R_3+\dotsb+R_i = \cdots = Q_{i-1}+R_i = Q_i$.

Finally, we must show that $\phi$ and $\psi$ compose to give the identity maps. But
\[ (\psi\circ\phi)(S_i) = \psi(\alpha_i) = \psi(\Ph(x_i)) = \Ph(\psi(x_i)) = \Ph(z_i) = S_i, \]
since the phase is preserved by a homomorphism. Hence $\psi\circ\phi=\id$ on the Cuntz-Krieger generators, since these are determined by $S_1$ and $S_2$, and so $\psi\circ\phi=\id_{C^*(E_n)}$. To show $\phi\circ\psi=\id$, note firstly that $C^*(x_i)\cong C(\D_q)$, as shown in the proof that $C^*(x_1,x_2)\cong C^*(E_2)$. As argued there, this implies that the images under $\phi$ of the Cuntz-Krieger generators are nonzero, and that $(\phi\circ\psi)(x_i)=x_i$ since we can make computations in $C^*(E_n)$. Hence $\phi\circ\psi=id_{C^*(x_1,\dotsc,x_n)}$. Consequently $\phi$ and $\psi$ are both injective and surjective, and so $C^*(E_n)\cong C^*(x_1,\dotsc,x_n)$.
\end{proof}

\subsection{Representations}

\begin{thrm}
Every irreducible \strepn\ of $C(\B_q^{2n})\cong C^*(x_1,\dotsc,x_n)$ is unitarily equivalent to one of the following representations:
\begin{enumerate}
\item a representation $\pi$ defined on a Hilbert space $\Hi$ with orthonormal basis $\set{\zeta_\mu}$ indexed by paths of finite length in $E_n$ ending at $v_0$ by:
\[ \pi(x_i)\zeta_{e_{jj}^{m_j}\cdots} = \begin{cases}
  \sqrt{1-q}\zeta_{e_{ii}^0e_{ij}e_{jj}^{m_j}\cdots} & \text{if } j<i \\
  \sqrt{1-q^{m_i+2}}\zeta_{e_{ii}^{m_i+1}\cdots}     & \text{if } j=i \\
  0                                                  & \text{if } j>i,
\end{cases} \]
for $i=1,\dotsc,n$
\item a family of representations $\varepsilon_{k,\theta}$ indexed by $k=1,\dotsc,n-1$, and $\theta\in S^1$, on the Hilbert space $\Hi$ with orthonormal basis indexed by paths of finite length ending at $v_k$ and which do not contain any loops $e_{kk}$ defined by
\begin{align*}
\varepsilon_{k,\theta}(x_i) &= 0 \qquad\text{for }i<k \\
\varepsilon_{k,\theta}(x_k) &\colon \zeta_{v_k}\mapsto \theta\zeta_{v_k}, \text{ and }\zeta_\alpha\mapsto 0 \text{ if } \alpha\ne v_k \\
\varepsilon_{k,\theta}(x_i) &= \pi^{(n-k)}(x_{i-k}) \qquad\text{for } i>k,
\end{align*}
where $\pi^{(n-k)}$ is the map $\pi$ on $C^*(E_{n-k})$ defined in (i), after relabelling $v_i'=v_{i-k}$, $e_{ij}'=e_{i-k,j-k}$, 
\item a one-dimensional representation $\sigma_\theta$ where $\theta\in S^1$, defined by
\[ \sigma_\theta(x_n) = \theta, \qquad \sigma_\theta(x_i) = 0 \quad\text{for }i<n. \]
\end{enumerate}
Only the representation $\pi$ is faithful.
\end{thrm}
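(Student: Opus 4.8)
The plan is to carry the problem over to the graph algebra $C^*(E_n)$ via the isomorphism $\psi\colon C^*(x_1,\dotsc,x_n)\to C^*(E_n)$, $x_i\mapsto z_i$, of Theorem~\ref{thrmEn}. A representation $\rho$ of $C^*(E_n)$ pulls back to the representation $\rho\circ\psi$ of $C^*(x_1,\dotsc,x_n)$, with $(\rho\circ\psi)(x_i)=\rho(z_i)$, and this is a bijection on irreducible representations up to unitary equivalence. So it suffices to classify $\Irr(C^*(E_n))$ from the ideal structure of graph algebras, exactly as for $n=2$, and then read off $\rho(z_i)$ in the faithful path representation computed above.

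First I would check that the hereditary saturated subsets of $E_n^0$ are precisely $\emptyset$ and $H_k:=\{v_0,\dotsc,v_k\}$ for $k=0,\dotsc,n$: the edges of $E_n$ only run downward or are loops, so a hereditary set containing $v_j$ must contain $v_0,\dotsc,v_j$, and each $H_k$ is saturated because no vertex $v_a$ with $a>k$ has all of its edge-ranges inside $H_k$. This gives a chain of gauge-invariant ideals $0=I_{-1}\subsetneq I_0\subsetneq\dotsb\subsetneq I_n=C^*(E_n)$, where $I_k$ is generated by $P_0,\dotsc,P_k$, and iterating $\Irr(A)=\Irr(J)\sqcup\Irr(A/J)$ gives
\[ \Irr(C^*(E_n)) = \bigsqcup_{k=0}^{n}\Irr(I_k/I_{k-1}). \]
Since $C^*(E_n)/I_{k-1}\cong C^*(E_n\setminus H_{k-1})$ and $E_n\setminus H_{k-1}$ is the graph on $v_k,\dotsc,v_n$ with a loop at every vertex and an edge $v_a\to v_b$ for each $a>b$ — the quantum odd sphere $C(S_q^{2(n-k)+1})$ — the subquotient $I_k/I_{k-1}$ is the ideal of that algebra generated by $P_k$. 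As in the $n=2$ proof: $v_0$ is a sink, so $I_0\cong\K$ via the matrix units $\{S_\alpha S_\beta^*\colon r(\alpha)=r(\beta)=v_0\}$; for $1\le k\le n-1$ the vertex $v_k$ carries the loop $e_{kk}$, and the argument used there for $J_v$ — matrix units from the paths ending at $v_k$ that avoid $e_{kk}$, together with the commuting partial unitary $S_{kk}$ generating a copy of $C(S^1)$ — gives $I_k/I_{k-1}\cong\K\otimes C(S^1)$; and $I_n/I_{n-1}=C^*(E_n)/I_{n-1}\cong C(S^1)$.

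Now I would read off the irreducibles. $\Irr(\K)$ is a single class: extended to $C^*(E_n)$ it is the faithful irreducible path representation, and pulled back through $\psi$ it is the representation $\pi$ of part~(i), whose formula is precisely the action of $\pi(z_i)$ computed above. For each $k=1,\dotsc,n-1$, $\Irr(\K\otimes C(S^1))$ is an $S^1$-family; extending it to $C^*(E_n)/I_{k-1}$, then pulling back through the quotient map and $\psi$, gives $\varepsilon_{k,\theta}$. Here $P_i\in I_{k-1}$ for $i<k$, so $z_i=P_iz_i\in I_{k-1}$ and $\varepsilon_{k,\theta}(x_i)=0$; on the $C(S^1)$-factor generated by $e_{kk}$ one finds, just as in the $n=2$ computation for $\varepsilon_\theta(z_1)$ (using $\sum_m\lambda_m=1$ and $\theta\bar\theta=1$), that $z_k$ acts as $\theta$ on $\zeta_{v_k}$ and as $0$ elsewhere; and under the relabelling that matches the paths ending at $v_k$ avoiding $e_{kk}$ with the paths ending at the bottom vertex of $E_{n-k}$, the $z_i$ with $i>k$ act exactly as in the path representation $\pi^{(n-k)}$ of $C^*(E_{n-k})=C(B_q^{2(n-k)})$, so $\varepsilon_{k,\theta}(x_i)=\pi^{(n-k)}(x_{i-k})$. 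Finally $\Irr(C(S^1))$ is an $S^1$-family giving $\sigma_\theta$: $z_i\in I_{n-1}$ for $i<n$ forces $\sigma_\theta(x_i)=0$, while in the quotient $S_n$ is unitary, so $z_n$ reduces to $S_n$, the identity function of $C(S^1)$, and $\sigma_\theta(x_n)=\theta$. Only $\pi$ is faithful, since (as already noted) the path representation is faithful, whereas $\varepsilon_{k,\theta}$ and $\sigma_\theta$ factor through proper quotients and hence annihilate $I_0\ne 0$.

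The step I expect to be the main obstacle is not the representation-theoretic bookkeeping, which runs parallel to the $n=2$ case, but the two structural facts it rests on: verifying that each subquotient $I_k/I_{k-1}$ really is $\K\otimes C(S^1)$ — controlling which $S_\alpha S_\beta^*$ survive modulo $I_{k-1}$ and how the loop $S_{kk}$ sits among them — and pinning down the path-relabelling that exhibits the $x_i$ $(i>k)$ part of $\varepsilon_{k,\theta}$ as the smaller faithful representation $\pi^{(n-k)}$; both require care in tracking how the $z_i$ act on the relevant invariant subspaces.
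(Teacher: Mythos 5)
Your proposal is correct and follows essentially the same route as the paper: transport the problem to $C^*(E_n)$ via $\psi$, use the chain of hereditary saturated sets $\{v_0,\dotsc,v_k\}$ to decompose $\Irr(C^*(E_n))$ into the contributions of the subquotients $\K$, $\K\otimes C(S^1)$ ($k=1,\dotsc,n-1$) and $C(S^1)$, and then compute the action of the $z_i$ in each extended representation exactly as in the $n=2$ case. The only cosmetic difference is that you package the iterated quotienting as a single composition series $I_0\subsetneq\dotsb\subsetneq I_n$ with $\Irr(C^*(E_n))=\bigsqcup_k\Irr(I_k/I_{k-1})$, whereas the paper peels off one vertex at a time; the content is identical.
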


\begin{proof}
$C^*(x_1,\dotsc,x_n)\cong C^*(E_n)$. Hence the ideals and representations of $C^*(x_1,\dotsc,x_n)$ correspond exactly to the ideals and representations of $C^*(E_n)$, and so we can use the powerful tool of graph algebras to classify these representations. The set $\set{v_0}$ is hereditary and saturated, and the ideal $J_0$ generated by $P_0$ is isomorphic to the compacts. The quotient $C^*(E_n)/J_0$ is isomorphic to the \cstalg\ $C^*(E_n\backslash\set{v_0})$, so we have an exact sequence
\[ 0\to J_0\to C^*(E_n)\to C^*\Bigl( \dotsb
\xygraph{{v_3}="v_3"(:^{e_{33}}@(ul,ur)"v_3" :^{e_{32}}[r]{v_2}="v_2"( :^{e_{22}}@(ul,ur)"v_2" :^{e_{21}}[r]{v_1}="v_1" :^{e_{11}}@(ul,ur)"v_1"), :_{e_{31}}@/_0.7pc/"v_1"}
\Bigr)\to 0. \]
Hence $\Irr(C^*(E_n))=\Irr(J_0)\sqcup\Irr C^*\Bigl( \dotsb
\xygraph{{\V}="v_3"(:^{e_{33}}@(ul,ur)"v_3" :^{e_{32}}[r]{\V}="v_2"( :^{e_{22}}@(ul,ur)"v_2" :^{e_{21}}[r]{\V}="v_1" :^{e_{11}}@(ul,ur)"v_1"), :_{e_{31}}@/_0.7pc/"v_1"} \Bigr)$. There is only one irreducible representation of $J_0$ up to unitary equivalence, which is the representation $\pi$ used previously when extended to a representation on $C^*(E_n)$. Composing with $\psi$ gives the representation in the Theorem.

Now $C^*(E_n\backslash\set{v_0})$ is the quantum sphere $C(S_q^{2n-1})$. This \cstalg\ is not simple, since $\set{v_1}$ is hereditary and saturated. Let $J_1$ be the ideal generated by $P_1$ in this algebra. We have an exact sequence
\[ 0\to J_1\to C^*(E_n\backslash\set{v_0})\to C^*(E_n\backslash\set{v_0,v_1})\to 0. \]
Hence $\Irr(C^*(E_n\backslash\set{v_0}) = \Irr(J_1)\sqcup\Irr(C^*(E_n\backslash\set{v_0,v_1}))$. The ideal $J_1$ is isomorphic to $K\otimes C(S^1)$ by the following reasoning. Consider the set $\Lambda_n^1$ of all paths in $E_n$ which end at $v_1$ and do not contain $e_{11}$. Note that $\Lambda_n^1$ has a direct correspondence with $\Lambda_{n-1}$, the set of paths in $E_{n-1}$ ending at $v_0$. Then $\set{S_\alpha S_\beta^*\colon \alpha,\beta\in\Lambda_n^1}$ is a system of matrix units inside $J_1$. Hence their closed span is isomorphic to $\K$.

Also, $S_{11}\in J_1$ since $S_{11}=S_{11}P_1$. $S_{11}$ is a partial unitary with full spectrum, so $C^*(S_{11})\cong C(S^1)$. Since $S_{11}$ commutes with $S_\alpha S_\beta^*$ aforementioned, by the general theory of graph algebras we have $J_1\cong\K\otimes C(S^1)$. This ideal has $S^1$ of irreducible representations. Fix $\theta\in S^1\subset\C$. Then define a representation $\varepsilon_{1,\theta}\colon J_w\to\B(\Hi)$, where $\Hi$ is the Hilbert space with orthonormal basis indexed $\Lambda_n^i$, and extend it to a representation on the whole algebra:
\[ \varepsilon_{1,\theta}(S_{11})\zeta_{v_1} = \theta\zeta_{v_1}, \]
and $\varepsilon_{1,\theta}(S_{11})$ maps all other paths to zero;
\[ \varepsilon_{1,\theta}(S_{ij})\zeta_\alpha = \casesother{\zeta_{e_{ij}\alpha}}{j=s(\alpha)}{0} \]
for $i\ge j\ge 1$ and $i\ne 1$.
Then
\[ \varepsilon_{1,\theta}(S_1)\zeta_{v_1} = \theta\zeta_{v_1}, \]
and zero otherwise. Also
\[ \varepsilon_{1,\theta}(S_i)\zeta_\alpha = \zeta_{e_{i,s(\alpha)}\alpha} \]
or zero otherwise, for $i\ge 1$. The yields the representation stated in the Theorem.

So we consider the \cstalg\ $C^*(E_n\backslash\set{v_0,v_1})$. Then $\set{v_2}$ is hereditary and saturated in the graph, so let $J_2$ be the ideal generated by $v_2$ so we have an exact sequence
\[ 0\to J_2\to C^*(E_n\backslash\set{v_0,v_1})\to C^*(E_n\backslash\set{v_0,v_1,v_2}). \]
Then $J_2\cong\K\otimes C(S^1)$ and we have a representation $\varepsilon_{2,\theta}$ similarly, and so on. We keep quotienting out vertices one by one, until we are left with vertices $\set{v_n,v_{n-1}}$. This is the graph for $C(S_q^3)$, and we have a representation $\varepsilon_{n-1,\theta}$.

Finally, $\set{v_{n-1}}$ is hereditary and saturated in the graph $E_n\backslash\set{v_0,\dotsc,v_{n-2}}$ and the quotient algebra is $C(S^1)$, which has characters $\sigma_\theta$ defined by $\sigma_\theta(e_{nn})=\theta$ for $\theta\in S^1$, so $\sigma_\theta(S_n)=\theta$ and $\sigma_\theta(S_i)=0$ for $i<n$. Hence $\sigma_\theta(x_n)=\theta$ and $\sigma_\theta(x_i)=0$ for $i<n$.

These are all the irreducible representations of $C^*(x_1,\dotsc,x_n)$ up to unitary equivalence.
\end{proof}

So the irreducible representations of $C(B_q^{2n})\cong C^*(x_1,\dotsc,x_n)$ are parameterised by a point and $n$ circles:
\[ \centerdot \qquad \underbrace{\text{\circle{20} } \quad\dotsb\quad \text{ \circle{20}}}_n \]
Only the representation $\pi$ is faithful.

\section{Appendix 1 - \texorpdfstring{$C^*(S_1,S_2)$}{C(S1,S2)} as a Universal Algebra}

In Section \ref{secBq4} we showed that $S_1$ and $S_2$ generate $C^*(E_2)$, where $E_2$ is the graph for the quantum $4$-ball $C(B_q^4)$:
\[ E_2 = \quad \graphEfour \]
There we simply regarded $S_1$ and $S_2$ as lying inside $C^*(E_2)$. However we can also define $C^*(S_1,S_2)$ as a universal \cstalg\ generated by $S_1$ and $S_2$ subject to relations between them. We can show the following relations for $S_1$, $S_2$ in $C^*(E_2)$:
\begin{align*}
S_2^*S_2&=1 & S_2S_2^*&=P_u \\
S_1^*S_1&=P_v+P_w & S_1S_1^*&=P_v \\
S_1S_2&=0 & S_1^*S_2&=0.
\end{align*}
In fact these relations are sufficient to characterise $C^*(S_1,S_2)$ as a universal algebra.
\begin{defn}
Let $C^*(T_1,T_2)$ be the universal \cstalg\ generated by $T_1$, $T_2$ s.t.
\begin{align*}
T_2^*T_2&=1 & T_2T_2^*&<1 \\
T_1^*T_1&=1-T_2T_2^* & T_1T_1^*&<T_1^*T_1 \\
T_1T_2&=0 & T_1^*T_2&=0.
\end{align*}
\end{defn}

The relations imply $T_2$ is a proper isometry, hence $T_2T_2^*$ is a projection, and so $T_1^*T_1$ is a nonzero projection, and so $T_1$ is a partial isometry. Furthermore, $T_2T_2^*$ is nonzero, since $T_2T_2^*=0 \imp T_2T_2^*T_2=0 \imp T_2 = 0$, and $T_2\ne 0$ since there is a surjective homomorphism onto the \cstalg\ $C^*(E_2)=C^*(S_1,S_2)$ by universality, in which the image $S_2$ of $T_2$ is nonzero. We may show in a similar manner that $T_1T_1^*$ is also nonzero. 

\begin{thrm}
$C^*(S_1,S_2)\cong C^*(T_1,T_2)$.
\end{thrm}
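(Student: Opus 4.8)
The plan is to apply the universal-property technique used repeatedly above, treating $C^*(S_1,S_2)=C^*(E_2)$ as the universal \cstalg\ for the Cuntz--Krieger relations of the graph $E_2$, and $C^*(T_1,T_2)$ as the universal \cstalg\ for the six displayed relations. I will construct mutually inverse homomorphisms
\[ \morphgraph{C^*(S_1,S_2)}{C^*(T_1,T_2)} \]
where $\phi$ runs left-to-right and $\psi$ right-to-left, and then check that the compositions are the identity maps on generators.

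The map $\psi$ is immediate. The elements $S_1,S_2\in C^*(E_2)$ satisfy exactly the defining relations of $C^*(T_1,T_2)$: namely $S_2^*S_2=1$, $S_2S_2^*=P_u<1$, $S_1^*S_1=P_v+P_w=1-P_u=1-S_2S_2^*$, $S_1S_1^*=P_v<P_v+P_w=S_1^*S_1$, $S_1S_2=0$, and $S_1^*S_2=0$ — the strict inequalities holding because $u,v,w$ are distinct vertices of $E_2$ with mutually orthogonal nonzero projections. Hence the universal property of $C^*(T_1,T_2)$ yields a homomorphism $\psi\colon C^*(T_1,T_2)\to C^*(S_1,S_2)$ with $\psi(T_i)=S_i$, and $\psi$ is surjective since $S_1$ and $S_2$ generate $C^*(E_2)$.

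For $\phi$ I must exhibit a Cuntz--Krieger $E_2$-family inside $C^*(T_1,T_2)$. Copying the formulas of Lemma~\ref{lemB4a} with $T$ in place of $S$, set
\[ Q_u:=T_2T_2^*,\qquad Q_v:=T_1T_1^*,\qquad Q_w:=T_1^*T_1-T_1T_1^*, \]
\[ R_a:=T_2^2T_2^*,\quad R_b:=T_1^2T_1^*,\quad R_c:=T_2T_1T_1^*,\quad R_d:=T_2(T_1^*T_1-T_1T_1^*),\quad R_e:=T_1(1-T_1T_1^*). \]
One checks that $\set{Q_u,Q_v,Q_w}\cup\set{R_a,\dotsc,R_e}$ satisfies (G1)--(G3) for $E_2$; this is the same computation as in Lemma~\ref{lemB4a}, since it uses only the relations expressing $T_i^*T_i$ and $T_iT_i^*$ together with $T_1T_2=0$, $T_1^*T_2=0$ and their adjoints $T_2^*T_1=0$, $T_2^*T_1^*=0$ — which are precisely the relations defining $C^*(T_1,T_2)$. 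The points needing attention are: $Q_w$ is a projection because $T_1T_1^*\le T_1^*T_1$; mutual orthogonality of $Q_u,Q_v,Q_w$ follows from $T_2^*T_1=0$, $T_2^*T_1^*=0$ and $T_1T_1^*\le T_1^*T_1$; the relations $R_a^*R_a=Q_u$, $R_b^*R_b=R_c^*R_c=Q_v$, $R_d^*R_d=R_e^*R_e=Q_w$ use $T_2^*T_2=1$ and the partial-isometry identity $T_1T_1^*T_1=T_1$; and the Cuntz--Krieger relations $Q_u=R_aR_a^*+R_cR_c^*+R_dR_d^*$, $Q_v=R_bR_b^*+R_eR_e^*$ reduce, after cancellation, to $T_2(Q_u+Q_v+Q_w)T_2^*=T_2T_2^*$ and $T_1(Q_v+Q_w)T_1^*=T_1T_1^*$ (there is no sum relation at $w$, which emits no edge). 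By the universal property of $C^*(E_2)$ there is then a homomorphism $\phi\colon C^*(E_2)\to C^*(T_1,T_2)$ sending each Cuntz--Krieger generator to the corresponding element above. It is surjective because $\phi(S_1)=R_b+R_e=T_1^2T_1^*+T_1-T_1^2T_1^*=T_1$ and $\phi(S_2)=R_a+R_c+R_d=T_2(T_2T_2^*+T_1^*T_1)=T_2$, using $T_2T_2^*+T_1^*T_1=1$.

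Finally, $(\psi\circ\phi)(S_i)=\psi(T_i)=S_i$, so $\psi\circ\phi=\id_{C^*(S_1,S_2)}$ since $S_1,S_2$ generate; and $(\phi\circ\psi)(T_i)=\phi(S_i)=T_i$, so $\phi\circ\psi=\id_{C^*(T_1,T_2)}$ since $T_1,T_2$ generate. Hence $\phi$ and $\psi$ are mutually inverse isomorphisms and $C^*(S_1,S_2)\cong C^*(T_1,T_2)$. (The nonzero-generator condition of the universal-algebra criterion is automatic once the compositions are identities, but also holds directly since $\psi(T_i)=S_i\ne0$ and $\phi(S_i)=T_i\ne0$.) I expect the only substantive work to be the Cuntz--Krieger verification for the $T$-family; the point to be careful about is that the strict inequalities ``$<$'' in the definition of $C^*(T_1,T_2)$ must be matched against the genuine strict inequalities $P_u<1$ and $P_v<S_1^*S_1$ that hold in $C^*(E_2)$ (so that $\psi$ is defined), whereas no such nonvanishing is required in the other direction because the relations (G1)--(G3) impose none on a Cuntz--Krieger family.
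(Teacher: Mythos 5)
Your proposal is correct and follows essentially the same route as the paper: $\psi(T_i):=S_i$ via universality of $C^*(T_1,T_2)$, $\phi$ defined on the Cuntz--Krieger generators by the formulas of Lemma~\ref{lemB4a} with $T_i$ in place of $S_i$, verification that $\phi(S_i)=T_i$, and the conclusion that both compositions are the identity on generators. Your treatment is somewhat more explicit than the paper's (which only checks representative Cuntz--Krieger relations and asserts the rest), but there is no substantive difference in method.
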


\begin{proof}
We want to find homomorphisms
\[ \morphgraph{C^*(E_2)}{\quad\;\; C^*(T_1,T_2)} \]
Define
\[ \psi(T_i):=S_i, \qquad i=1,2. \]
From the results of Lemma \ref{lemB4a} we define
\[ \phi(P_u):=T_2T_2^* \qquad \phi(P_v):=T_1T_1^* \quad \dotsb \quad \phi(S_e):=T_1(1-T_1T_1^*), \]
replacing each $S_i$ in the Lemma with $T_i$. Then $\psi$ extends to a homomorphism because the relations between $T_1$ and $T_2$ are derived from the relations between $S_1$ and $S_2$. To show $\phi$ extends to a homomorphism, we must show that it preserves the Cuntz-Krieger relations, for which we show some representative examples. Now $P_vP_w=0$, and
\[ \phi(P_v)\phi(P_w) = T_1T_1^*(T_1^*T_1-T_1T_1^*) = T_1T_1^*-T_1T_1^* = 0, \]
since $T_1T_1^*<T_1^*T_1$ and these are projections. Also $S_c^*S_c=P_v$, and
\[ \phi(S_c)^*\phi(S_c) = T_1T_1^*T_2^*T_2T_1T_1^* = T_1T_1^*T_1T_1^* = T_1T_1^* = \phi(P_v), \]
since $T_2^*T_2=1$ and $T_1T_1^*$ is a projection. Furthermore, $S_bS_b^*+S_eS_e^*=P_v$ and
\begin{multline*}
\phi(S_b)\phi(S_b)^*+\phi(S_e)\phi(S_e)^* = T_1^2T_1^*T_1(T_1^*)^2+T_1(1-T_1T_1^*)^2T_1^* \\
= T_1^2(T_1^*)^2+T_1(1-T_1T_1^*)T_1^* = T_1(T_1T_1^*+1-T_1T_1^*)T_1^* = T_1T_1^* = \phi(P_v).
\end{multline*}
Now $\phi(S_i)=T_i$ for $i=1,2$, since
\[ \phi(S_1)=\phi(S_b+S_e)=T_1(T_1T_1^*+1-T_1T_1^*) = T_1, \]
and
\[ \phi(S_2)=\phi(S_a+S_c+S_d)=T_2(T_2T_2^*+T_1T_1^*+T_1^*T_1-T_1T_1^*) = T_2, \]
by the relation $T_1^*T_1=1-T_2T_2^*$. Hence $(\psi\circ\phi)(S_i)=S_i$ and so $\psi\circ\phi=\id$ on the Cuntz-Krieger generators as well, so $\psi\circ\phi=\id_{C^*(E_2)}$. Similarly $\phi\circ\psi=\id_{C^*(T_1,T_2)}$ and so these maps are isomorphisms and $C^*(S_1,S_2)\cong C^*(E_2)\cong C^*(T_1,T_2)$.
\end{proof}

Just as we had three isomorphic \cstalg s describing the quantum disc - $C(\D_q)$, $\T=C^*(S)$ and $C^*(E_1)$ - we have three isomorphic \cstalg s describing the quantum $4$-ball:
\[ C^*(x_1,x_2) \quad\cong\quad C^*(T_1,T_2) \quad\cong\quad C^*\Bigl(\graphEfourS\Bigr) \]

\section{Appendix 2 - \texorpdfstring{$C^*(S_1,\dotsc,S_n)$}{C*(S1,...,Sn)} as a Universal Algebra}

We can also regard $C^*(S_1,\dotsc,S_n)$ as a universal algebra generated by $S_1,\dotsc,S_n$ with certain relations between them.

\begin{defn}
Let $C^*(T_1,\dotsc,T_n)$ be the universal \cstalg\ generated by $T_1,\dotsc,T_n$ s.t.
\begin{align*}
T_iT_j &= 0 \qquad\text{if }i<j \\
T_i^*T_j &= 0 \qquad\text{if }i\ne j \\
T_n^*T_n &= 1 \\
T_{i-1}^*T_{i-1} &= T_i^*T_i-T_iT_i^*, \qquad i=2,\dotsc,n \\
T_1T_1^* &< T_1^*T_1.
\end{align*}
\end{defn}

The relations imply that the $T_i$ are partial isometries, and that $T_n$ is a proper isometry. Also we must have $T_iT_i^*<T_i^*T_i$ since $\pi\colon C^*(E_2)\to\B(\Hi)$ is a representation of these relations in which $T_i\ne 0$.

\begin{thrm}
$C^*(S_1,\dotsc,S_n)\cong C^*(T_1,\dotsc,T_n)$.
\end{thrm}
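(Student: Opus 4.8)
The plan is to repeat, structurally verbatim, the universal-algebra argument used in the proof of Theorem~\ref{thrmEn} and in Appendix~1. Both $C^*(S_1,\dots,S_n)=C^*(E_n)$ and $C^*(T_1,\dots,T_n)$ are universal for explicit generators and relations, so it suffices to produce mutually inverse $*$-homomorphisms $\phi\colon C^*(E_n)\to C^*(T_1,\dots,T_n)$ and $\psi\colon C^*(T_1,\dots,T_n)\to C^*(E_n)$ defined on generators, preserving the relations, with nonzero images on generators. First I would verify that $S_1,\dots,S_n\in C^*(E_n)$ satisfy the defining relations of $C^*(T_1,\dots,T_n)$; this is just a rereading of Lemma~\ref{lemBna}: from $S_i^*S_i=P_0+\dots+P_i$, $S_iS_i^*=P_i$ and $S_n^*S_n=1$ one gets $S_{i-1}^*S_{i-1}=S_i^*S_i-S_iS_i^*$ by telescoping and $S_1S_1^*=P_1<P_0+P_1=S_1^*S_1$, while $S_iS_j=0$ for $i<j$ and $S_i^*S_j=0$ for $i\ne j$ follow by inserting the domain/range projections of the $S_{ij}$ and using orthogonality of the $P_m$ together with the fact that $E_n$ has an edge from $v_i$ to $v_j$ only when $i\ge j$. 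Hence $\psi(T_i):=S_i$ extends to a $*$-homomorphism.

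For the reverse map I would take the formulas of Lemma~\ref{lemBna} expressing the Cuntz--Krieger generators in terms of the $S_i$ and replace each $S_i$ by $T_i$: set $\phi(P_0):=T_1^*T_1-T_1T_1^*$, $\phi(P_j):=T_jT_j^*$ for $j\ge1$, $\phi(S_{i0}):=T_i(T_1^*T_1-T_1T_1^*)$, and $\phi(S_{ij}):=T_iT_jT_j^*$ for $j\ge1$; these images are manifestly projections and partial isometries. The one step with any content is checking that $\phi$ respects the Cuntz--Krieger relations, and it is driven entirely by the telescoping identity $T_1^*T_1+T_2T_2^*+\dots+T_iT_i^*=T_i^*T_i$, which comes from repeated use of $T_{m-1}^*T_{m-1}=T_m^*T_m-T_mT_m^*$ and gives $\phi(P_0+\dots+P_i)=T_i^*T_i$. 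With it: orthogonality $\phi(P_i)\phi(P_j)=0$ reduces to $T_i^*T_j=0$; the relation $S_{ij}^*S_{ij}=P_j$ becomes $T_jT_j^*(T_i^*T_i)T_jT_j^*=T_jT_j^*$, valid because $T_jT_j^*\le T_i^*T_i$ whenever $j\le i$; and the Cuntz--Krieger sum $P_i=\sum_{j=0}^i S_{ij}S_{ij}^*$ becomes $T_i(T_1^*T_1+T_2T_2^*+\dots+T_iT_i^*)T_i^*=T_iT_i^*T_iT_i^*=T_iT_i^*=\phi(P_i)$. This is routine but is where essentially all the work lives; it is entirely parallel to the corresponding computation in the proof of Theorem~\ref{thrmEn}.

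Finally I would verify that the two composites are identities. The same identity gives $\phi(S_i)=\phi\bigl(\sum_{j=0}^i S_{ij}\bigr)=T_i(T_1^*T_1+T_2T_2^*+\dots+T_iT_i^*)=T_iT_i^*T_i=T_i$, so $(\phi\circ\psi)(T_i)=\phi(S_i)=T_i$; as the $T_i$ generate $C^*(T_1,\dots,T_n)$, this forces $\phi\circ\psi=\id$. Conversely $(\psi\circ\phi)(S_i)=\psi(T_i)=S_i$, so $\psi\circ\phi=\id$ on the $S_i$, and hence, through the formulas of Lemma~\ref{lemBna}, on all the Cuntz--Krieger generators, i.e.\ on $C^*(E_n)$. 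Thus $\phi$ and $\psi$ are mutually inverse $*$-isomorphisms, so $C^*(S_1,\dots,S_n)=C^*(E_n)\cong C^*(T_1,\dots,T_n)$. The only thing to keep honest is nondegeneracy of $C^*(T_1,\dots,T_n)$ (the $T_i$ are nonzero), which follows from the surjection onto $C^*(E_n)=C^*(S_1,\dots,S_n)$ sending $T_i\mapsto S_i\ne0$; once $\psi\circ\phi=\id$ is in hand, injectivity of $\phi$ upgrades this to nonvanishing of each $\phi$-image. I expect the main obstacle to be nothing more than the bookkeeping in the Cuntz--Krieger verification --- in particular, keeping straight which summands $T_jT_j^*$ occur inside $T_i^*T_i$ --- with nothing conceptually new beyond the $n=2$ case already treated.
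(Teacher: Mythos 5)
Your proposal is correct and follows exactly the route the paper takes (and sketches very briefly): $\psi(T_i):=S_i$, $\phi$ defined on the Cuntz--Krieger generators by substituting $T_i$ for $S_i$ in the formulas of Lemma~\ref{lemBna}, verification of the relations via the telescoping identity $T_i^*T_i=T_1^*T_1+T_2T_2^*+\dots+T_iT_i^*$, and the observation $\phi(S_i)=T_i$ to see the composites are identities. You have merely supplied the details the paper leaves implicit, and they check out.
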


\begin{proof}
As before we may exhibit isomorphisms
\[ \morphgraph{C^*(E_n)}{\qquad\quad\; C^*(T_1,\dotsc,T_n)} \]
as before, where $\phi$ is defined via the results of Lemma \ref{lemBna} and $\psi(T_i):=S_i$ for $i=1,\dotsc,n$. We may then show that $\phi(S_i)=T_i$ for all $i$, and that the images of the generators are all nonzero. These maps preserve the relations. Clearly $\phi\circ\psi=\id$ and $\psi\circ\phi=\id$, and so we have isomorphism.
\end{proof}

Hence we have three isomorphic \cstalg s describing the quantum $2n$-ball:
\[ C^*(x_1,\dotsc,x_n) \quad\cong\quad C^*(T_1,\dotsc,T_n) \quad\cong\quad C^*(E_n) \]

\end{document}